\numberwithin{equation}{section}
\def\cocoa{{\hbox{\rm C\kern-.13em o\kern-.07em C\kern-.13em o\kern-.15em A}}}
\newtheorem{theorem}{Theorem}[section]
\newtheorem{question}[theorem]{Question}
\newtheorem{lemma}[theorem]{Lemma}
\newtheorem{proposition}[theorem]{Proposition}
\newtheorem{corollary}[theorem]{Corollary}
\theoremstyle{definition}
\newtheorem{remark}[theorem]{Remark}
\newtheorem{definition}[theorem]{Definition}
\newtheorem{construction}[theorem]{Construction}
\newcommand {\sHom}{\mathcal{H}\kern -0.25ex{\mathit om}}
\newcommand {\sExt}{\mathcal{E}\kern -0.25ex{\mathit xt}}
\newcommand {\sTor}{\mathcal{T}\kern -0.25ex{\mathit or}}
\newcommand {\im}{\mathrm{im}}
\newcommand {\rk}{\mathrm{rk}}
\newcommand {\Ext}{\mathrm{Ext}}
\newcommand {\Hom}{\mathrm{Hom}}
\newcommand {\Hilb}{\mathcal{H}\kern -0.25ex{\mathit ilb\/}}
\newcommand {\Mov}{\overline{\mathrm{Mov}}}
\newcommand {\cK}{\mathcal{K}}
\newcommand {\cA}{\mathcal{A}}
\newcommand {\cU}{\mathcal{U}}
\newcommand {\cV}{\mathcal{V}}
\newcommand {\cW}{\mathcal{W}}
\newcommand {\cP}{\mathcal{P}}
\newcommand {\cQ}{\mathcal{Q}}
\newcommand{\cC}{{\mathcal C}}
\newcommand{\cE}{{\mathcal E}}
\newcommand{\cF}{{\mathcal F}}
\newcommand{\cM}{{\mathcal M}}
\newcommand{\cN}{{\mathcal N}}
\newcommand{\cO}{{\mathcal O}}
\newcommand{\cG}{{\mathcal G}}
\newcommand{\cI}{{\mathcal I}}
\newcommand {\bZ}{\mathbb{Z}}
\newcommand {\bC}{\mathbb{C}}
\newcommand {\bP}{\mathbb{P}}
\newcommand{\Pic}{\operatorname{Pic}}
\def\p#1{{\bP^{#1}}}
\def\mapright#1{\mathbin{\smash{\mathop{\longrightarrow}
\limits^{#1}}}}
\title[Instanton bundles]{Instanton bundles\\ on two Fano threefolds of index $1$}
\thanks{The first author is a member of GNSAGA group of INdAM, of PRIN 2015 \lq Geometry of Algebraic Varieties\rq, cofinanced by MIUR and is supported by the framework of the MIUR grant Dipartimenti di Eccellenza 2018-2022 (E11G18000350001). The second author is supported by Narodowe Centrum Nauki 2018/30/E/ST1/00530.}
\subjclass[2010]{Primary: 14J60. Secondary: 14J45, 14D21, 14F05.}
\keywords{Fano threefold, vector bundle, $\mu$--(semi)stable bundle, instanton bundle}
\author[Gianfranco Casnati, Ozhan Genc]{Gianfranco Casnati, Ozhan Genc}
\begin{document}

\begin{abstract}
We deal with instanton bundles on the product $\p1\times\p2$ and the blow up of $\p3$ along a line. We give an explicit construction leading to instanton bundles. Moreover, we also show that they correspond to smooth points of a unique irreducible component of their moduli space.
\end{abstract}

\maketitle

\section{Introduction}
A smooth irreducible closed subscheme $X\subseteq \p N$ of dimension $3$ is called a {\sl Fano threefold} if its anticanonical line bundle $\omega_X^{-1}$ is ample (see \cite{I--P} for the results about Fano threefold mentioned in what follows). The {\sl index} $i_X$ of a Fano threefold is the greatest integer such that $\omega_X\cong\cO_X(-i_Xh)$ for some ample line bundle $\cO_X(h)\in\Pic(X)$. Such a line bundle $\cO_X(h)$ is uniquely determined and it is called the {\sl fundamental line bundle of $X$}. 

One has  $1\le i_X\le 4$ and for each $i_X$ in this range there is a finite number of deformation families of Fano threefolds of index $i_X$. E.g., if $i_X=4,3$ if and only if $X$ is isomorphic to either $\p3$, or the smooth quadric in $\p4$, respectively. There exist $8$ deformation families of Fano threefolds with $i_X=2$ and $95$ with $i_X=1$.

In the seminal paper \cite{A--D--H--M} the authors introduced for the first time instanton bundles on $\p3$ as rank $2$ bundles $\cE$ such that $c_1(\cE)=0$ and $h^0\big(\p3,\cE\big)=h^1\big(\p3,\cE(-2)\big)=0$. Since then, instanton bundles have been widely studied, especially from the viewpoint of the smoothness and connectedness of their moduli space. 

Also a number of generalizations of instantons appeared. E.g. in \cite{Fa} (see also \cite{Kuz}) the author extends the notion of instanton bundle to each Fano threefold with cyclic Picard group as those rank two bundles such that $c_1(\cE)=(2q_X-i_X)h$ and $h^0\big(X,\cE\big)=h^1\big(X,\cE(-q_Xh)\big)=0$, where
$$
q_X:=\left[\frac {i_X}2\right].
$$
The author also studied therein instanton bundles on several Fano threefolds $X$ with indices $1\le i_X\le3$.
In \cite{M--M--PL, C--C--G--M} the authors extended the definition of instanton bundle to each Fano threefold. 

In order to understand such a definition we recall the notion of $\mu$--(semi)stability. For each sheaf $\cF$ on $X$ the {\sl slope} of $\cF$ with respect to $\cO_X(h)$ is the rational number $\mu(\cF):=c_1(\cF)h^2/\rk(\cF)$. We say that the coherent torsion--free sheaf $\cF$ is {\sl $\mu$--stable} (resp. {\sl $\mu$--semistable}) with respect to $\cO_X(h)$ if  $\mu(\mathcal G) < \mu(\cF)$ (resp. $\mu(\mathcal G) \le \mu(\cF)$) for each subsheaf $\mathcal G$ with $0<\rk(\mathcal G)<\rk(\cF)$.

\begin{definition}
\label{dInstanton}
Let $X$ be a Fano threefold.

A vector bundle $\cE$ of rank $2$ on $X$ is called an instanton bundle if the following properties hold:
\begin{itemize}
\item $c_1(\cE)=(2q_X-i_X)h$;
\item $\cE$ is $\mu$--semistable with respect to $\cO_{X}(h)$ and $h^0\big(X,\cE\big)=0$;
\item $h^1\big(X,\cE(-q_Xh)\big)=0$;
\end{itemize}
The class $c_2(\cE)\in A^2(X)$ is called the {charge} of $\cE$.
\end{definition}

When $\Pic(X)\cong\bZ$ and $\cE$ is a rank $2$ bundle with $c_1(\cE)\in\{\ 0,-h\ \}$, then the vanishing $h^0\big(X,\cE\big)=0$ is equivalent to the $\mu$--stability of $\cE$. This is no longer true if  $\rk\Pic(X)\ge2$.

Nevertheless, a bundle $\cE$ which is either $\mu$--stable with $c_1(\cE)=0$, or $\mu$--semistable with $c_1(\cE)=-h$ always satisfies $h^0\big(X,\cE\big)=0$. In particular, the latter vanishing on Fano threefolds with odd $i_X$ is an immediate consequence of the other properties in Definition \ref{dInstanton}.

In \cite{M--M--PL} the authors studied bundles which are instanton in the sense of the previous definition on the {\sl flag threefold}, i.e. the general hyperplane section of the Segre image of $\p2\times\p2$. 
In \cite{C--C--G--M} a similar description has been given for the blow up of $\p3$ at a point, where the condition on a class $\zeta\in A^2(X)$ for being the charge of an instanton bundle are also given. An analogous study on $\p1\times\p1\times\p1$ is the object of \cite{A--M}. 

All these threefolds are important examples of Fano threefolds of index $2$ and they complete the analysis of instanton bundles on Fano threefolds of index $2$ with very ample fundamental divisor.

In the paper  \cite{C--C--G--M} the authors introduced the following definitions, where $\Lambda$ denotes the Hilbert scheme of lines in $X$.

\begin{definition}
\label{dEarnest}
Let $\cE$ be an instanton bundle on a Fano threefold $X$.
\begin{itemize}
\item We say that $\cE$ is generically trivial  on $\Lambda$ (resp. on the component $\Lambda_0\subseteq\Lambda$) if $h^1\big(L,\cE((i_X-2q_X-1)h)\otimes\cO_L\big)=0$ when $L\in\Lambda$ (resp. $L\in\Lambda_0$) is general.
\item We say that $\cE$ is  earnest if $h^1\big(X,\cE(-q_Xh-D)\big)=0$ when $\vert D\vert\ne\emptyset$ contains smooth integral elements.
\end{itemize}
\end{definition}

If $i_X$ is even, generically trivial instanton bundles on the component $\Lambda_0\subseteq\Lambda$ are the instanton bundles such that $\cE\otimes\cO_L\cong\cO_{\p1}^{\oplus2}$  for each general $L\in\Lambda_0$, while when $i_X$ is odd, the ones such that $\cE\otimes\cO_L\cong\cO_{\p1}(-1)\oplus\cO_{\p1}$. Each instanton bundle is generically trivial if $i_X\ge3$ (see \cite{Fa}). When $i_X\le2$ the generic triviality of each instanton bundle has been conjectured in \cite[Section 3.7 and Conjecture 3.16]{Kuz}. 

The notion of earnest instanton bundle is related to the $\mu$--semistability of its restriction to general hypersurface sections: see the introduction of \cite{C--C--G--M} for some details. In particular, if $\Pic(X)\cong\bZ$ each instanton bundle is earnest, thanks to a theorem of Maruyama (see \cite[Examples 3.2 and 3.3]{C--C--G--M}). One can prove that the same is true when $X$ is either the flag threefold (see \cite[Example 3.4]{C--C--G--M}), or $\p1\times\p1\times\p1$ (see \cite{A--M}).

When $X$ is the blow up of $\p3$ at a point, it is not immediate whether instanton bundles are earnest or not. Indeed, in \cite{C--C--G--M} the authors are able only to prove that the apparently infinite set of vanishing in the above definition reduces to the single vanishing for the exceptional divisor of the blow up. Moreover, the existence of earnest instanton bundles on that Fano threefold is proved for every admissible choice of the charge. 

In the present paper we focus our attention on $F_0:=\p1\times\p2$ and on the blow up $F_1$ of $\p3$ along a line $R$. Notice that $F_e$ is a Fano threefold with $i_{F_e}=1$ for $e=0,1$. We have a natural isomorphism $F_e\cong \bP(\cP_e)\mapright\pi\p1$, where $\cP_e:=\cO_{\p1}^{\oplus2}\oplus\cO_{\p1}(e)$: throughout the whole paper, following \cite{Ha2},  for each coherent sheaf $\cG$ on $F_e$ we set  $\cP(\cG):=\mathrm{Proj}(\mathrm{Sym}(\cG))$. We denote by $\xi_e$ and $f$ the classes of $\cO_{\bP(\cP_e)}(1)$ and $\pi^*\cO_{\p1}(1)$ respectively. Thus we have an isomorphism
$$
A(F_e)\cong\bZ[\xi_e,f](f^2,\xi_e^3-e\xi_e^2 f).
$$
The fundamental line bundle is $\cO_{F_e}(3\xi_e+(2-e)f)$. If $e=1$ it corresponds to the quartic surfaces throughout $R$. From now on $E\subseteq F_1$ denotes the exceptional divisor of the blow up. The arguments used in the two cases $e=0$ and $e=1$ are definitely similar. Indeed the two threefolds behave in a very similar way, as we show in Section \ref{sFano}. 

We first deal with the threefold $F_1$ in Sections \ref{sMonad}, \ref{sInstanton}, \ref{sEarnest} and \ref{sFinal}. We then describe the changes in the arguments which are necessary for dealing with $F_0$ in the last Section \ref{sSegre}. 

Section \ref{sGeneral} contains some general and well--known results concerning instanton bundles on Fano threefolds, while Section \ref{sFano} is devoted to list some results on the threefolds $F_e$.

In Section \ref{sMonad} we first prove that the coefficients $\alpha$ and $\beta$ for the charge $\alpha\xi_1^2+\beta\xi_1 f$ of an instanton bundle satisfy a list of restrictions: among them $\alpha\ge2$, $\alpha+\beta\ge4$ and, for earnest instanton bundles, $\beta\ge1$. Then we prove the existence of a monad associated to each instanton bundle on $F_1$. 

More precisely, for every choice of integers $\alpha,\beta,\gamma,\delta$ with $\alpha\ge2$, $\gamma\ge0$ and
\begin{equation}
\label{abcd}
\beta\ge\max\left\{\ 4-\alpha,\ \alpha-\delta-2,\ 1-\gamma\ \right\},\qquad \delta\ge2\gamma,
\end{equation}
we set
\begin{gather*}
\cC^{-1}_1:=\cO_{F_1}(-2\xi_1-f)^{\oplus\alpha+\beta-4}\oplus\cO_{F_1}(-2\xi_1)^{\oplus\gamma},\\
\cC^0_1:=\cO_{F_1}(-2\xi_1)^{\oplus\beta+\gamma-1}\oplus\Omega_{F_1\vert\p1}(-f)^{\oplus\alpha-2}\oplus\cO_{F_1}(-\xi_1)^{\oplus\delta+\beta-\alpha+2},\\
\cC^1_1:=\cO_{F_1}(-\xi_1-f)^{\oplus\gamma}\oplus\cO_{F_1}(-\xi_1)^{\oplus\delta-2\gamma}\oplus\cO_{F_1}(-\xi_1+f)^{\oplus\beta+\gamma-1}.
\end{gather*}

Our first main result is as follows.

\begin{theorem}
\label{tSimplify}
Let $\cE$ be an instanton bundle with charge $\alpha\xi_1^2+\beta  \xi_1 f$ on $F_1$.

Then $\cE$ is the cohomology of a monad $\cC^\bullet_1$ of the form
\begin{equation}
\label{Monad}
0\longrightarrow \cC^{-1}_1\longrightarrow \cC^0_1\longrightarrow\cC^1_1\longrightarrow0
\end{equation}
where $\gamma:=h^1\big({F_1},\cE(-\xi_1+f)\big)$, $\delta:=h^1\big({F_1},\cE(-\xi_1+2f)\big)$.

Conversely, if the cohomology $\cE$ of the monad $\cC^\bullet_1$ is a $\mu$--semistable bundle for some integers $\alpha,\beta,\gamma,\delta$, then $\cE$ is an instanton bundle with charge $\alpha\xi_1^2+\beta  \xi_1 f$ on $F_1$ such that
\begin{enumerate}
\item $h^1\big(F_1,\cE(-\xi_1+f)\big)=\gamma$;
\item $h^1\big(F_1,\cE(-\xi_1+2f)\big)\le \delta$;
\item $h^1\big(F_1,\cE(-D)\big)=0$ for each integral smooth effective divisor $D\not\in\vert\xi-f\vert$.
\end{enumerate}
\end{theorem}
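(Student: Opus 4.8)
The plan is to establish both implications through the Beilinson--type spectral sequence attached to the $\p2$--bundle structure $\pi\colon F_1=\bP(\cP_1)\to\p1$, reading off the monad $\cC^\bullet_1$ from the cohomology of $\cE$ and, conversely, recovering the instanton properties from the hypercohomology of the monad. The essential preliminary is a cohomological dictionary. Since $i_{F_1}=1$, the fundamental bundle is $\cO_{F_1}(3\xi_1+f)$ and $\omega_{F_1}\cong\cO_{F_1}(-3\xi_1-f)$; as $\cE$ has rank $2$ with $c_1(\cE)=-h=-3\xi_1-f$ we get $\cE^\vee\cong\cE(3\xi_1+f)$, whence Serre duality yields the self--duality $h^i\big(F_1,\cE(a\xi_1+bf)\big)=h^{3-i}\big(F_1,\cE(-a\xi_1-bf)\big)$. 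Combined with the defining vanishings $h^0(F_1,\cE)=h^1(F_1,\cE)=0$ (hence $h^2(F_1,\cE)=h^3(F_1,\cE)=0$) and with the $\mu$--semistability of $\cE$, this reduces every cohomology group that can occur to a short, explicit list, computable by Riemann--Roch in terms of $\alpha$, $\beta$ and of $\gamma:=h^1\big(F_1,\cE(-\xi_1+f)\big)$, $\delta:=h^1\big(F_1,\cE(-\xi_1+2f)\big)$.

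For the forward implication I would resolve the diagonal $\Delta\subseteq F_1\times F_1$ as the tensor product of two complexes: the pullback of the Koszul resolution of the diagonal of $\p1$, with terms $\cO_{F_1}(-f)\boxtimes\cO_{F_1}(-f)$ and $\cO_{F_1}\boxtimes\cO_{F_1}$, and the relative Beilinson resolution of the relative diagonal inside $F_1\times_{\p1}F_1$, whose terms are $\cO_{F_1}(-2\xi_1)\boxtimes\det\Omega_{F_1\vert\p1}(2\xi_1)$, $\cO_{F_1}(-\xi_1)\boxtimes\Omega_{F_1\vert\p1}(\xi_1)$ and $\cO_{F_1}\boxtimes\cO_{F_1}$. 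Applying the associated spectral sequence to a suitable twist of $\cE$ produces an $E_1$ page whose entries are the groups $H^q\big(F_1,\cE\otimes\cO_{F_1}(a\xi_1+bf)\big)$ and $H^q\big(F_1,\cE\otimes\Omega_{F_1\vert\p1}(a\xi_1+bf)\big)$ tensored with the corresponding right--hand factors; after the global twist these right--hand factors are exactly the building blocks of $\cC^{-1}_1$, $\cC^0_1$ and $\cC^1_1$. The task then splits into (i) evaluating each entry by means of the dictionary above, so that the surviving multiplicities become the exponents appearing in the three terms, and (ii) showing that only three consecutive columns survive, so that the sequence degenerates into a monad with cohomology $\cE$. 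The inequalities \eqref{abcd} are precisely the conditions ensuring that all these multiplicities are nonnegative.

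For the converse I would start from the monad and compute the Chern classes of its cohomology $\cE$ by the Whitney formula, obtaining $c_1(\cE)=-3\xi_1-f=-h$ and $c_2(\cE)=\alpha\xi_1^2+\beta\xi_1 f$; together with the assumed $\mu$--semistability this gives the first bullet of Definition \ref{dInstanton} and, since a $\mu$--semistable bundle with $c_1=-h$ has no sections, also $h^0(F_1,\cE)=0$. The remaining vanishings are read off the two hypercohomology spectral sequences of the monad, using the known cohomology of the building blocks $\cO_{F_1}(-2\xi_1-f)$, $\cO_{F_1}(-2\xi_1)$, $\cO_{F_1}(-\xi_1\pm f)$, $\cO_{F_1}(-\xi_1)$ and $\Omega_{F_1\vert\p1}(-f)$: this yields $h^1(F_1,\cE)=0$, hence the instanton property, together with the refined statements, namely $h^1\big(F_1,\cE(-\xi_1+f)\big)=\gamma$ in (1), the bound $h^1\big(F_1,\cE(-\xi_1+2f)\big)\le\delta$ in (2) (only an inequality, since part of the cohomology feeding $\delta$ may drop when passing to the cohomology of the monad), and the vanishing (3) for every smooth integral $D\notin\vert\xi_1-f\vert$, checked on the building blocks; the class $\vert\xi_1-f\vert$ is singled out because it is the only one on which an $H^1$ can persist, and it is exactly the one measured by $\gamma$.

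The step I expect to be the main obstacle is part (i)--(ii) of the forward direction, and specifically the control of the relative cohomology $H^q\big(F_1,\cE\otimes\Omega_{F_1\vert\p1}(a\xi_1+bf)\big)$, whose first cohomology feeds the multiplicity $\alpha-2$ of $\Omega_{F_1\vert\p1}(-f)$ in $\cC^0_1$: handling this genuinely three--dimensional term, while simultaneously pinning down the groups carrying $\gamma$ and $\delta$, is what forces the spectral sequence to collapse to the three prescribed terms rather than to a longer complex.
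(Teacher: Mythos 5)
Your converse direction is essentially the paper's own argument (Whitney formula for the Chern classes, $h^0\big(F_1,\cE\big)=0$ from $\mu$--semistability, and the display of the monad to extract the vanishings), and your forward setup --- a Beilinson--type spectral sequence from a resolution of the diagonal adapted to $\pi\colon F_1\to\p1$, fed by a cohomological dictionary coming from Lemma \ref{lHoppe}, Serre duality, Lemma \ref{lNatural} and Riemann--Roch --- is exactly the mechanism behind the paper's Lemma \ref{lAO} and Proposition \ref{pTable}. For item (3) of the converse you would additionally need the classification of which systems $\vert a\xi_1+bf\vert$ contain smooth integral divisors (Remark \ref{rSmooth}), but that is a minor omission.

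The genuine gap is in the forward direction, precisely at the step you describe as ``showing that only three consecutive columns survive, so that the sequence degenerates into a monad.'' This is false whenever $\gamma=h^1\big(F_1,\cE(-\xi_1+f)\big)>0$: the cohomology table of Proposition \ref{pTable} forces a nonzero term in degree $2$, namely $\cO_{F_1}(-\xi_1+f)^{\oplus\gamma}$ (coming from $h^2\big(F_1,\cE(\xi_1-f)\big)=\gamma$), so the spectral sequence produces a four--term complex $\widehat{\cC}^{-1}\to\widehat{\cC}^{0}\to\widehat{\cC}^{1}\to\widehat{\cC}^{2}$ rather than a monad; and $\gamma>0$ does occur for honest instanton bundles, e.g.\ the non--earnest bundles of Remark \ref{rNonEarnest} (Construction \ref{conInstanton} with $\alpha\ge4$) have $\gamma\ge\alpha-3>0$. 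The paper's proof contains exactly the argument your plan is missing: one replaces $\widehat{\cC}^{1}$ by the kernel of the last differential $\varphi\colon\widehat{\cC}^{1}\to\widehat{\cC}^{2}$ and proves that this kernel is again a direct sum of line bundles of the prescribed shape. This uses that, after twisting, $\varphi$ is pulled back from a morphism of split bundles on $\p1$ which must be surjective (since $R^i\pi_*$ of the relevant twists vanish), so $\ker(\varphi)\cong\bigoplus_i\cO_{F_1}(-\lambda_i f)$ up to twist; then $\Hom$--vanishing computations (including against $\Omega_{F_1\vert\p1}$ via Sequence \eqref{seqOmega}) bound $\lambda_i\le2$, and the cohomology of the display pins down the multiplicities, yielding $\varepsilon=\gamma$ and $\eta=\delta-2\gamma$. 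This step is unavoidable for a structural reason as well: the term $\cO_{F_1}(-\xi_1-f)^{\oplus\gamma}$ of $\cC^1_1$ does not appear among the members $\cG_p$ of the dual collection at all, so no selection of ``surviving columns'' of your spectral sequence could ever produce the monad \eqref{Monad} directly; the difficulty is not, as you anticipated, the control of $H^q\big(F_1,\cE\otimes\Omega_{F_1\vert\p1}(a\xi_1+bf)\big)$, which the dictionary handles, but the reduction from four terms to three.
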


As an almost immediate by--product of the above monadic description we characterize earnest instanton bundles $\cE$ as the ones such that the single vanishing
\begin{equation*}
h^1\big(F_1,\cE(-\xi+f)\big)=0
\end{equation*}
holds (see Corollary \ref{cSimplify}). Moreover, we also prove that the charge $\alpha\xi_1^2+\beta\xi_1 f$ of an instanton bundle always satisfies $4\alpha+3\beta\ge15$.

In Section \ref{sInstanton} we deal with the existence of instanton bundles for all the admissible values of their charge $\alpha\xi_1^2+\beta\xi_1 f$, i.e. $\alpha\ge2$, $\alpha+\beta\ge4$ and $4\alpha+3\beta\ge15$. More precisely, we describe therein a construction (see Construction \ref{conInstanton}) leading to certain bundles $\cE$ of rank $2$ with $c_2(\cE)=\alpha\xi_1^2+\beta\xi_1 f$ and then we prove the following result.

\begin{theorem}
\label{tInstanton}
If $\alpha\ge2$, $\alpha+\beta\ge4$, $4\alpha+3\beta\ge15$, then the bundle $\cE$ obtained via Construction \ref{conInstanton} is a generically trivial $\mu$--stable instanton bundle $\cE$ with charge $\alpha\xi_1^2+\beta  \xi_1 f$ on $F_1$ such that 
$$
\dim\Ext^1_{F_1}\big(\cE,\cE\big)=8\alpha+6\beta-30,\qquad \Ext^2_{F_
1}\big(\cE,\cE\big)=\Ext^3_{F_1}\big(\cE,\cE\big)=0.
$$
\end{theorem}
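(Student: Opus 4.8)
The plan is to verify the numerical, cohomological, and homological assertions in turn, reducing everything to the defining exact sequences of Construction \ref{conInstanton} and to Hirzebruch--Riemann--Roch. Throughout I use that $i_{F_1}=1$ and $q_{F_1}=0$, so that the fundamental divisor is $h=3\xi_1+f$, the prescribed first Chern class is $c_1(\cE)=(2q_{F_1}-i_{F_1})h=-h$, and the instanton vanishings of Definition \ref{dInstanton} read $h^0(F_1,\cE)=h^1(F_1,\cE)=0$. I also record the Serre duality isomorphisms $\Ext^i_{F_1}(\cE,\cE)\cong \Ext^{3-i}_{F_1}(\cE,\cE(-h))^\vee$, valid since $\omega_{F_1}\cong\cO_{F_1}(-h)$. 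First I would read off the Chern classes of $\cE$ from the terms entering Construction \ref{conInstanton}, obtaining $c_1(\cE)=-h$ and $c_2(\cE)=\alpha\xi_1^2+\beta\xi_1 f$ by a routine manipulation in $A(F_1)\cong\bZ[\xi_1,f]/(f^2,\xi_1^3-\xi_1^2 f)$. The long exact cohomology sequences attached to the construction then yield $h^1(F_1,\cE)=0$, the hypotheses $\alpha\ge2$, $\alpha+\beta\ge4$ and $4\alpha+3\beta\ge15$ being exactly what is needed to kill the relevant cohomology of the line bundles and twisted cotangent sheaves appearing there.

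The first genuine obstacle is $\mu$-stability. Since $\rk(\cE)=2$ and $c_1(\cE)=-h$, the slope is $\mu(\cE)=-h^3/2$, and $\mu$-stability is equivalent to the absence of a line-bundle inclusion $\cO_{F_1}(D)\hookrightarrow\cE$ with $Dh^2\ge\mu(\cE)$; as any such inclusion forces $H^0(F_1,\cE(-D))\ne0$, it suffices to prove $H^0(F_1,\cE(-D))=0$ for the finitely many classes $D$ with $Dh^2\ge\mu(\cE)$ that could possibly carry a section. I would enumerate these classes and rule each one out using the construction sequences, again invoking the numerical hypotheses on $(\alpha,\beta)$. Once $\mu$-semistability is established, the vanishing $h^0(F_1,\cE)=0$ is automatic by the remark following Definition \ref{dInstanton}, so $\cE$ is an instanton bundle of the asserted charge.

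For generic triviality, note that $i_{F_1}-2q_{F_1}-1=0$, so the condition is $h^1(L,\cE\otimes\cO_L)=0$ for general $L$, i.e. $\cE\otimes\cO_L\cong\cO_{\p1}(-1)\oplus\cO_{\p1}$; by semicontinuity it is enough to produce one line with this splitting type, which I would obtain by restricting the defining sequences of Construction \ref{conInstanton} to a general line and reading off the induced splitting. Turning to the homological invariants, $\mu$-stability makes $\cE$ simple, whence $\Ext^0_{F_1}(\cE,\cE)=\bC$; moreover $\mu(\cE(-h))<\mu(\cE)$ together with $\mu$-semistability gives $\Hom_{F_1}(\cE,\cE(-h))=0$, so Serre duality yields $\Ext^3_{F_1}(\cE,\cE)\cong\Hom_{F_1}(\cE,\cE(-h))^\vee=0$.

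The second real obstacle is the vanishing $\Ext^2_{F_1}(\cE,\cE)\cong\Ext^1_{F_1}(\cE,\cE(-h))^\vee=0$, which I would establish by computing $\Ext^\bullet_{F_1}(\cE,\cE(-h))$ through the sequences of Construction \ref{conInstanton} (or, alternatively, through the monad of Theorem \ref{tSimplify}). With $\Ext^0_{F_1}(\cE,\cE)=\bC$ and $\Ext^2_{F_1}(\cE,\cE)=\Ext^3_{F_1}(\cE,\cE)=0$ in hand, the remaining dimension is pinned down by the Euler characteristic: Hirzebruch--Riemann--Roch applied to $\cE^\vee\otimes\cE$ gives $\chi(\cE,\cE)=31-8\alpha-6\beta$, whence $\dim\Ext^1_{F_1}(\cE,\cE)=1-\chi(\cE,\cE)=8\alpha+6\beta-30$. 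The two substantive difficulties are thus the pair of vanishing statements, $\mu$-stability and $\Ext^2_{F_1}(\cE,\cE)=0$, both of which I expect to reduce to controlled cohomology computations on the explicit complexes furnished by the construction; all the remaining assertions are then bookkeeping.
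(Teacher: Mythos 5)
Your proposal is correct and follows essentially the same route as the paper's proof: Chern classes and $h^1\big(F_1,\cE\big)=0$ from the Serre--type sequence \eqref{seqStandard} of Construction \ref{conInstanton}, $\mu$--stability via the Hoppe criterion (Lemma \ref{lHoppe}) through a case analysis on the twists $\cO_{F_1}(-a\xi_1-bf)$ with $15a+9b\ge -27$ (with $Z\ne\emptyset$, i.e. $4\alpha+3\beta\ge15$, entering precisely here), generic triviality by restricting to a line disjoint from $Z$, $\Ext^3_{F_1}\big(\cE,\cE\big)=0$ from simplicity and Serre duality, $\Ext^2_{F_1}\big(\cE,\cE\big)=0$ by cohomology computations on the same sequences, and $\dim\Ext^1_{F_1}\big(\cE,\cE\big)$ from Riemann--Roch. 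The two steps you defer go through exactly as you predict: the paper reduces $\Ext^2_{F_1}\big(\cE,\cE\big)=0$ to the vanishings $h^2\big(F_1,\cE(\xi_1+f)\big)=h^2\big(F_1,\cE\otimes\cI_{Z\vert F_1}(2\xi_1)\big)=0$, both checked via the ideal sequence of $Z$ and the identification $\cE(2\xi_1)\otimes\cO_Z\cong\cN_{Z\vert F_1}$.
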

We then conclude the section proving that all the bundles above  represent points in a single component of the moduli space of instanton bundles.

It is noteworthy to remark that Construction \ref{conInstanton} often returns non earnest bundles. E.g. such bundles are certainly non earnest when either $\beta\le 0$ (indeed, in this case, $h^1\big(F_1,\cE(-\xi+f)\big)=\gamma\ge1$ thanks to Corollary \eqref{cBound} or  Inequalities \eqref{abcd} above), or $\alpha\ge4$ without restrictions on $\beta$ (see Remark \ref{rNonEarnest}). Thus it is quite natural to ask if it is possible to find different constructions leading to earnest instanton bundle. 

A first trivial remark is that this is certainly not possible when $\beta\le0$, because $1-\gamma\le\beta$. But even if $\beta\ge1$, we are not able of deducing the existence of earnest instanton bundles, because of the aforementioned Remark \ref{rNonEarnest}.

For this reason, in Section \ref{sEarnest}, we describe a second alternative construction (see Construction \ref{conEarnest}) which returns earnest instanton bundles for all the admissible values of $\alpha$ and $\beta$, i.e. when $\alpha\ge2$, $\beta\ge1$ and $4\alpha+3\beta\ge15$. More precisely, we prove the existence of bundles $\cE$ of rank $2$ with $c_2(\cE)=\alpha\xi_1^2+\beta\xi_1 f$ such that the following result holds true.

\begin{theorem}
\label{tEarnest}
If $\alpha\ge2$, $\beta\ge1$, $4\alpha+3\beta\ge15$, then the bundle $\cE$ obtained via Construction \ref{conEarnest} is an earnest, generically trivial, $\mu$--stable instanton bundle $\cE$ with charge $\alpha\xi_1^2+\beta  \xi_1 f$ on $F_1$ such that 
$$
\dim\Ext^1_{F_1}\big(\cE,\cE\big)=8\alpha+6\beta-30,\qquad \Ext^2_{F_1}\big(\cE,\cE\big)=\Ext^3_{F_1}\big(\cE,\cE\big)=0.
$$
\end{theorem}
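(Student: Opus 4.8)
The plan is to follow the template of the proof of Theorem~\ref{tInstanton} almost verbatim, isolating the one genuinely new feature of the statement, namely earnestness. By Corollary~\ref{cSimplify} an instanton bundle on $F_1$ is earnest exactly when the single group $h^1\big(F_1,\cE(-\xi_1+f)\big)$ vanishes, and by Theorem~\ref{tSimplify} this number is precisely the integer $\gamma$ attached to the monad $\cC^\bullet_1$. Construction~\ref{conEarnest} is rigged so that the resulting bundle is the cohomology of a monad with $\gamma=0$; hence, the moment we know its cohomology $\cE$ is a $\mu$--semistable bundle, the converse part of Theorem~\ref{tSimplify} yields $h^1\big(F_1,\cE(-\xi_1+f)\big)=\gamma=0$, and earnestness is automatic. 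Passing from the hypothesis $\alpha+\beta\ge4$ of Theorem~\ref{tInstanton} to the hypothesis $\beta\ge1$ here is exactly what makes $\gamma=0$ compatible with Inequalities~\eqref{abcd} (indeed $1-\gamma=1\le\beta$, and $\alpha+\beta\ge4$ is itself forced by $\alpha\ge2$, $\beta\ge1$ and $4\alpha+3\beta\ge15$), so the monad of Construction~\ref{conEarnest} is well defined with non--negative exponents upon choosing $\delta$ large enough.

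First I would check that $\cE$ is a rank $2$ vector bundle with $c_1(\cE)=-h$ and $c_2(\cE)=\alpha\xi_1^2+\beta\xi_1f$. Local freeness reduces to verifying that the maps in the defining sequences of Construction~\ref{conEarnest} are fibrewise of maximal rank, so that the cohomology of $\cC^\bullet_1$ is a bundle; the Chern classes are then read off by additivity of the total Chern class along those sequences, using the presentation $A(F_1)\cong\bZ[\xi_1,f]/(f^2,\xi_1^3-\xi_1^2f)$ and $\omega_{F_1}\cong\cO_{F_1}(-h)$. This is routine bookkeeping once the monad terms are fixed.

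The heart of the argument, and the step I expect to be the main obstacle, is $\mu$--stability. Here I would argue by contradiction: a destabilizing subsheaf of the rank $2$ bundle $\cE$ may be taken saturated of rank $1$, hence of the form $\cO_{F_1}(a\xi_1+bf)\otimes\cI_Z$ for some subscheme $Z$ and some $a,b\in\bZ$, and its existence forces $h^0\big(F_1,\cE(-a\xi_1-bf)\big)\ne0$ for a pair $(a,b)$ with $(a\xi_1+bf)h^2\ge\mu(\cE)=-h^3/2$. The task is to exclude every such pair. To do so I would feed the relevant twists into the monad $\cC^\bullet_1$ and use the cohomology vanishings of its terms (the very vanishings established in the forward direction of Theorem~\ref{tSimplify}) to show $h^0\big(F_1,\cE(-a\xi_1-bf)\big)=0$ in all admissible cases; this is the same mechanism used for Theorem~\ref{tInstanton}, now specialised to $\gamma=0$.

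Once $\mu$--stability is secured the rest follows formally. By the converse of Theorem~\ref{tSimplify}, $\cE$ is an instanton bundle with the prescribed charge, the vanishing $h^0(\cE)=0$ being automatic for a $\mu$--semistable bundle of first Chern class $-h$ as noted after Definition~\ref{dInstanton}, and $\gamma=0$ gives earnestness. Generic triviality I would obtain as in Theorem~\ref{tInstanton}, by showing $h^1\big(L,\cE\otimes\cO_L\big)=0$ for the general line $L$, equivalently $\cE\otimes\cO_L\cong\cO_{\p1}(-1)\oplus\cO_{\p1}$. Finally the Ext computation is numerical: $\mu$--stability gives $\Ext^0_{F_1}(\cE,\cE)=\bC$; Serre duality together with $\omega_{F_1}\cong\cO_{F_1}(-h)$ identifies $\Ext^3_{F_1}(\cE,\cE)$ with $\Hom_{F_1}\big(\cE,\cE(-h)\big)^\vee$, which vanishes since $\mu(\cE(-h))<\mu(\cE)$ and both bundles are stable, and identifies $\Ext^2_{F_1}(\cE,\cE)$ with $\Ext^1_{F_1}\big(\cE,\cE(-h)\big)^\vee$, whose vanishing I would extract from the monad by the same cohomological analysis as in Theorem~\ref{tInstanton}. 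With $\Ext^2=\Ext^3=0$ and $\Ext^0=\bC$ one gets $\dim\Ext^1_{F_1}(\cE,\cE)=1-\chi(\cE,\cE)$, and Hirzebruch--Riemann--Roch evaluates $\chi(\cE,\cE)$ from $c_1(\cE)$ and $c_2(\cE)=\alpha\xi_1^2+\beta\xi_1f$, yielding the stated value $8\alpha+6\beta-30$.
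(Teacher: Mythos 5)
Your proposal rests on a misidentification of Construction \ref{conEarnest}, and this is a genuine gap rather than a presentational difference. That construction is not a monad construction: it is a Hartshorne--Serre construction. One chooses pairwise disjoint curves $M_1,\dots,M_{\alpha-2}$ in $\Lambda_M$ and $N_1,\dots,N_{\beta-1}$ in $\Lambda_N$, sets $Z$ equal to their union, and obtains $\cE$ from Theorem \ref{tSerre} as an extension
$$
0\longrightarrow\cO_{F_1}(-2\xi_1-f)\longrightarrow\cE\longrightarrow\cI_{Z\vert F_1}(-\xi_1)\longrightarrow0
$$
(Sequence \eqref{seqEarnest}). So there is no monad \lq\lq rigged so that $\gamma=0$\rq\rq\ to feed twists into: local freeness comes from Theorem \ref{tSerre}, not from fibrewise rank conditions on monad maps, and the exponents $\delta$, $\gamma$ you discuss never enter the construction. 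Moreover, your route to earnestness is circular. The forward direction of Theorem \ref{tSimplify}, which produces the monad and identifies $\gamma$ with $h^1\big(F_1,\cE(-\xi_1+f)\big)$, applies only to bundles already known to be instanton; the converse direction applies only to bundles \emph{defined} as cohomologies of such monads. The constructed $\cE$ is neither, a priori, so Theorem \ref{tSimplify} cannot be the engine that proves either the instanton property or earnestness.

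What actually has to be done, and what the paper does, is to work directly with Sequence \eqref{seqEarnest} and the geometry of $Z$. The $\mu$--stability is proved by Hoppe's criterion (Lemma \ref{lHoppe}), i.e.\ by showing $h^0\big(F_1,\cE(-a\xi_1-bf)\big)=0$ whenever $15a+9b\ge-27$, through a case analysis that is not formal: it uses that $Z\ne\emptyset$, that no fibre of $\pi$ contains a curve of $\Lambda_M$, and that for $\alpha=2$ the bound $4\alpha+3\beta\ge15$ forces $\beta\ge3$, so no fibre contains two disjoint curves of $\Lambda_N$. Earnestness is exactly the place where choosing $\Lambda_M$ instead of the lines of $\Lambda$ (as in Construction \ref{conInstanton}) pays off: Sequences \eqref{seqEarnest} and \eqref{seqIdeal} give $h^1\big(F_1,\cE(-\xi_1+f)\big)=h^0\big(Z,\cO_{F_1}(-2\xi_1+f)\otimes\cO_Z\big)$, which vanishes because $\cO_{F_1}(-2\xi_1+f)$ restricts with degree $-1$ on each $M_i$ and degree $-2$ on each $N_j$; only then does Corollary \ref{cSimplify} yield earnestness. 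Your proposal bypasses precisely this computation, which is the substantive content of the claim (compare Remark \ref{rNonEarnest}: with curves of $\Lambda$ the analogous $h^0$ equals $\alpha-2$ and earnestness fails for $\alpha\ge4$). Finally, your $\Ext^3$ and Riemann--Roch steps are correct (they are Equalities \eqref{Ext3} and \eqref{Ext12}), but the vanishing of $\Ext^2_{F_1}\big(\cE,\cE\big)$ must again be extracted from Sequence \eqref{seqEarnest} tensored by $\cE^\vee\cong\cE(h)$, using $\cE(2\xi_1+f)\otimes\cO_Z\cong\cN_{Z\vert F_1}$ (Equality \eqref{Normal}), not from a monad that is unavailable at that stage.
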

As in the previous case we finally prove that all the bundles above represent points in a single component of the moduli space of instanton bundles.

In what follows we will denote by $\cI_{F_1}(\alpha\xi^2+\beta  \xi f)$ the locus of points representing instanton bundles with charge $\alpha\xi^2+\beta  \xi f$ in the moduli space $\cM_{F_1}(2;0,\alpha\xi^2+\beta  \xi f)$ of vector bundles $\cE$ of rank $2$ with $c_1(\cE)=0$ and $c_2(\cE)=\alpha\xi^2+\beta  \xi f$ which are $\mu$--stable with respect to $\cO_{F_1}(h)$.

In view of the irreducibility of the moduli space of instanton bundles on $\p3$ recently proved in \cite{Tik1,Tik2}, and the results listed above it is natural to ask whether $\cI_{F_1}(\alpha\xi^2+\beta  \xi f)$ is irreducible as well, or at least if Constructions \ref{conInstanton} and \ref{conEarnest} actually give bundles in the same component when $\alpha\ge2$, $\beta\ge1$, $4\alpha+3\beta\ge15$.

We are not able to answer the above natural questions. Nevertheless, in Section \ref{sFinal} we deal with them, giving very partial answers in few particular cases.

In Section \ref{sSegre}, we turn our attention to the threefold $F_0$. The following two theorems are proved with the same arguments used in Theorem \ref{tSimplify}, in Construction \ref{conEarnest} and in Theorem \ref{tEarnest}.

More precisely, for every choice of integers $\alpha,\beta$ with $\alpha\ge2$, $\beta\ge3$, $\alpha+\beta\ge6$ we set
\begin{gather*}
\cC^{-1}_0:=\cO_{F_0}(-2\xi_0-f)^{\oplus\alpha+\beta-6},\\
\cC^0_0:=\cO_{F_0}(-2\xi_0)^{\oplus\beta-3}\oplus\Omega_{F_0\vert \p1}(-f)^{\oplus\alpha-2}\oplus\cO_{F_0}(-\xi_0-f)^{\oplus\gamma},\\
\cC^1_0:=\cO_{F_0}(-\xi_0-f)^{\oplus\alpha-\beta+\gamma}\oplus\cO_{F_0}(-\xi_0)^{\oplus\beta-3}.
\end{gather*}

The first main result of Section \ref{sSegre} is as follows.

\begin{theorem}
\label{tSimplifySegre}
Let $\cE$ be an instanton bundle with charge $\alpha\xi_0^2+\beta  \xi_0 f$ on $F_0$.

Then $\cE$ is the cohomology of a monad $\cC^\bullet_0$ of the form
\begin{equation}
\label{MonadSegre}
0\longrightarrow \cC^{-1}_0\longrightarrow \cC^0_0\longrightarrow\cC^1_0\longrightarrow0,
\end{equation}
where $\gamma:=h^1\big({F_0},\cE(\xi_0-f)\big)$.

Conversely, if the cohomology $\cE$ of the monad $\cC^\bullet_0$ is a $\mu$--semistable bundle for some integers $\alpha,\beta,\gamma$, then $\cE$ is an earnest instanton bundle with charge $\alpha\xi_0^2+\beta  \xi_0 f$ such that $h^1\big({F_0},\cE(\xi_0-f)\big)\le \gamma$.
\end{theorem}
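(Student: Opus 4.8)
The plan is to transport the proof of Theorem~\ref{tSimplify} to $F_0\cong\p1\times\p2$, replacing the relative Beilinson apparatus of the $\p2$--bundle $F_1\to\p1$ by that of the trivial bundle $F_0\to\p1$. First I would resolve the diagonal of $F_0$ by tensoring the Beilinson resolutions of the diagonals of $\p1$ and $\p2$: the left--hand factors produce exactly the twisted line bundles $\cO_{F_0}(-a\xi_0-bf)$ and the relative cotangent bundle $\Omega_{F_0\vert\p1}$ (the pull--back of $\Omega_{\p2}$) occurring in $\cC^\bullet_0$, while the right--hand factors are the twists of $\cE$ whose cohomology builds the $E_1$--page of the associated spectral sequence. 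The crux is the determination of the cohomology table of these twists. Here I would combine: the instanton vanishings $h^0\big(F_0,\cE\big)=h^1\big(F_0,\cE\big)=0$ (recall $q_{F_0}=0$); $\mu$--semistability, which kills the remaining $h^0$'s of negative twists; Serre duality together with $\cE^\vee\cong\cE(h)$ (so that $h^i\big(F_0,\cE(t)\big)=h^{3-i}\big(F_0,\cE(-t)\big)$); and the K\"unneth formula on $\p1\times\p2$. The outcome I expect is that the only surviving entries are those accounting for the summands of $\cC^{-1}_0$, $\cC^0_0$ and $\cC^1_0$, the single free entry being $\gamma=h^1\big(F_0,\cE(\xi_0-f)\big)$, which is not controlled by the instanton vanishings.

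Once the table is fixed, I would show that the bicomplex degenerates onto three consecutive columns, so that the surviving terms assemble into a complex of the form (\ref{MonadSegre}) whose cohomology is $\cE$ in degree $0$ and vanishes in degrees $\pm1$; this exhibits $\cE$ as the cohomology of the monad $\cC^\bullet_0$, the displayed multiplicities being forced by the dimensions of the nonzero cohomology groups.

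For the converse I would start from an abstract monad $\cC^\bullet_0$ whose cohomology $\cE$ is $\mu$--semistable, and first read off its invariants from the additivity of $\mathrm{ch}$ along (\ref{MonadSegre}): a direct computation gives $\rk(\cE)=2$, $c_1(\cE)=-(3\xi_0+2f)=-h$ and $c_2(\cE)=\alpha\xi_0^2+\beta\xi_0 f$. As $c_1(\cE)=-h$, $\mu$--semistability forces $h^0\big(F_0,\cE\big)=0$ by the remark following Definition~\ref{dInstanton}, and $h^1\big(F_0,\cE\big)=h^1\big(F_0,\cE(-q_{F_0}h)\big)=0$ follows by computing the hypercohomology of $\cC^\bullet_0$ against the elementary cohomology of its line--bundle and $\Omega_{F_0\vert\p1}$ summands; hence $\cE$ is an instanton bundle. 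To obtain earnestness at no extra cost I would exploit the key difference with $F_1$: on $\p1\times\p2$ the class $\xi_0-f$ is not effective, since $H^0\big(F_0,\cO_{F_0}(\xi_0-f)\big)=H^0\big(\p2,\cO_{\p2}(1)\big)\otimes H^0\big(\p1,\cO_{\p1}(-1)\big)=0$, and likewise $f-\xi_0$ is not effective; thus, contrary to the exceptional class $\vert\xi-f\vert$ on $F_1$, the free parameter $\gamma$ never enters the earnestness condition of Definition~\ref{dEarnest}. It then suffices to deduce from the monad the vanishings $h^1\big(F_0,\cE(-D)\big)=0$ for the remaining admissible $D$, exactly as in Corollary~\ref{cSimplify}, so that $\cE$ is automatically earnest. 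Finally, twisting (\ref{MonadSegre}) by $\cO_{F_0}(\xi_0-f)$ and taking hypercohomology yields $h^1\big(F_0,\cE(\xi_0-f)\big)\le\gamma$: only the summand $\cO_{F_0}(-\xi_0-f)^{\oplus\gamma}$ of $\cC^0_0$ contributes, and the relevant connecting map may fail to be injective, which is exactly why one obtains an inequality here whereas the forward construction gives the equality $\gamma=h^1\big(F_0,\cE(\xi_0-f)\big)$.

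The hard part will be the degeneration step of the forward direction: I must check that, once the cohomology table is imposed, the Beilinson bicomplex really collapses onto three columns with no surviving higher differentials, so that the degree--$0$ cohomology sheaf is $\cE$ and the sheaves in degrees $\pm1$ vanish. The delicate computation is the control of the middle column, where the pulled--back factor $\Omega_{\p2}$ meets the cohomology of the corresponding twist of $\cE$; this is where the relative Euler sequence is needed and where indexing and sign subtleties are most likely to hide. The remaining verifications are routine, if tedious, K\"unneth bookkeeping.
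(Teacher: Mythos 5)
Your proposal is correct and follows essentially the same route as the paper: a Beilinson-type spectral sequence for the Orlov collection on $F_0$ (your K\"unneth resolution of the diagonal of $\p1\times\p2$ yields exactly the same collection and complex as the paper's appeal to the Ancona--Ottaviani theorem for $\bP(\cO_{\p1}^{\oplus3})$, as in Lemma \ref{lAOSegre}), with the cohomology table pinned down by $\mu$--semistability, the instanton vanishings, Serre duality, the relative Euler sequence and Riemann--Roch, just as in Proposition \ref{pTableSegre}. Your converse argument --- Chern classes from additivity along the monad, vanishings from hypercohomology, earnestness from the non-effectivity of $\pm(\xi_0-f)$ on $F_0$, and the inequality $h^1\big(F_0,\cE(\xi_0-f)\big)\le\gamma$ from twisting the monad --- is also the paper's.
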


Then we describe a construction (see Construction \ref{conSegre}) leading to bundles $\cE$ of rank $2$ with $c_2(\cE)=\alpha\xi_0^2+\beta\xi_0 f$ such that the following result holds.

\begin{theorem}
\label{tSegre}
If $\alpha\ge2$, $\beta\ge3$, $\alpha+\beta\ge6$, then the bundle $\cE$ obtained via Construction \ref{conSegre} is an earnest, generically trivial, $\mu$--stable instanton bundle $\cE$ with charge $\alpha\xi_0^2+\beta  \xi_0 f$ on $F_0$ such that 
$$
\dim\Ext^1_{F_0}\big(\cE,\cE\big)=4\alpha+6\beta-30,\qquad \Ext^2_{F_0}\big(\cE,\cE\big)=\Ext^3_{F_0}\big(\cE,\cE\big)=0.
$$
\end{theorem}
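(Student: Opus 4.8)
The plan is to prove Theorem \ref{tSegre} by running the same machinery developed for $F_1$, now adapted to $F_0=\p1\times\p2$, and crucially by exploiting the strong structural parallel between $F_0$ and $F_1$ stressed in Section \ref{sFano}. First I would specify Construction \ref{conSegre} explicitly: given $\alpha\ge2$, $\beta\ge3$, $\alpha+\beta\ge6$, one builds $\cE$ as the cohomology of the monad $\cC_0^\bullet$ displayed before the theorem, where $\gamma$ is chosen so that the monad is genuinely a monad (the left map is a subbundle inclusion and the right map is surjective). The existence of such maps with the required injectivity/surjectivity is a general-position argument: one counts global sections $\Hom(\cC_0^{-1},\cC_0^0)$ and $\Hom(\cC_0^0,\cC_0^1)$, verifies the relevant vanishings using the cohomology of line bundles and of $\Omega_{F_0\vert\p1}$ on $F_0$ (all computable from the known cohomology table on $\p1\times\p2$ via the projection $\pi$ and the Euler sequence for the relative cotangent bundle), and then invokes a Bertini/Beilinson-type degeneracy-locus argument to show that a general choice of maps yields a locally free cohomology $\cE$. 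The Chern class bookkeeping — that $c_1(\cE)=0$ and $c_2(\cE)=\alpha\xi_0^2+\beta\xi_0 f$ — follows by taking the total Chern class of the monad term-by-term in $A(F_0)=\bZ[\xi_0,f]/(f^2,\xi_0^3)$.

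Next I would show the listed qualitative properties. Given that the general cohomology $\cE$ is a $\mu$--semistable (indeed $\mu$--stable) bundle, the converse direction of Theorem \ref{tSimplifySegre} already hands me, for free, that $\cE$ is an earnest instanton bundle with the correct charge and with $h^1\big(F_0,\cE(\xi_0-f)\big)\le\gamma$. So the burden reduces to: (i) upgrading $\mu$--semistability to $\mu$--stability for the general member, and (ii) proving generic triviality, i.e. $\cE\otimes\cO_L\cong\cO_{\p1}(-1)\oplus\cO_{\p1}$ for a general line $L$ (recall $i_{F_0}=1$ is odd). For (i) I would argue that a rank-$2$ bundle with $c_1=0$ that is $\mu$--semistable but not $\mu$--stable would admit a destabilizing line subsheaf of slope $0$, hence a section of $\cE$ or of a twist pairing against an effective divisor class of the correct numerics, and I would rule this out either by the $h^0\big(F_0,\cE\big)=0$ vanishing (already part of being an instanton) combined with the constraint $\NS(F_0)\cong\bZ^2$, or by a direct restriction-to-$L$ computation. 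For (ii) I would restrict the monad $\cC_0^\bullet$ to a general line $L$ in one of the two rulings and read off the splitting type from the restricted monad, using that each summand restricts to a known bundle on $\p1$; semicontinuity then gives the generic splitting type.

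Finally I would compute the $\Ext$--groups. The identity $\Ext^i_{F_0}\big(\cE,\cE\big)\cong H^i\big(F_0,\cE\otimes\cE^\vee\big)$ (using $\cE^\vee\cong\cE$ since $c_1(\cE)=0$ and $\rk\cE=2$) lets me run the standard spectral-sequence computation: apply $\Hom(\cE,-)$ (or $\Hom(-,\cE)$) to the two short exact sequences into which the monad splits, resolving $\cE\otimes\cE^\vee$ by tensoring $\cE$ with the dualized monad. The vanishing $\Ext^2=\Ext^3=0$ should drop out from the vanishing of the relevant $H^2$ and $H^3$ of the constituent sheaves $\cE\otimes(\cC_0^j)^\vee$, which in turn follow from the cohomology vanishings used to construct the monad together with Serre duality on $F_0$ (where $\omega_{F_0}=\cO_{F_0}(-3\xi_0-2f)$). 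Then $\dim\Ext^1_{F_0}\big(\cE,\cE\big)$ is forced by the Euler characteristic: $\sum_i(-1)^i\dim\Ext^i=\chi\big(\cE\otimes\cE^\vee\big)$, and since $\Ext^0$ is $1$--dimensional (stability) and $\Ext^2=\Ext^3=0$, one gets $\dim\Ext^1=1-\chi\big(\cE\otimes\cE^\vee\big)$, which I expect to evaluate to $4\alpha+6\beta-30$ by Riemann--Roch on $F_0$. The main obstacle I anticipate is step (i)–(ii) together with the surjectivity/injectivity in general position for the specific unbalanced monad terms: the presence of the relative cotangent summand $\Omega_{F_0\vert\p1}(-f)$ and the $\gamma$--dependent twists means the generality argument must be checked carefully so that the degeneracy loci of the maps have the expected (empty) dimension, and that the resulting $\cE$ is honestly $\mu$--stable rather than merely semistable. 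Once that generality is secured, the remaining cohomological vanishings and the Euler-characteristic count are the same routine computations already performed for $F_1$ in Theorem \ref{tEarnest}.
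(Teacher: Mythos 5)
Your proposal misidentifies Construction \ref{conSegre}, and this matters for what the theorem actually asserts. The construction is not a generic-monad construction but a Hartshorne--Serre construction: one takes $Z\subseteq F_0$ to be a disjoint union of $\alpha-2$ curves in $\Lambda_M$ and $\beta-3$ curves in $\Lambda_N$, verifies $\det(\cN_{Z\vert F_0})\cong\cO_{F_0}(\xi_0)\otimes\cO_Z$ component by component, and applies Theorem \ref{tSerre} to produce $\cE$ sitting in an extension
\begin{equation*}
0\longrightarrow\cO_{F_0}(-2\xi_0-f)\longrightarrow\cE\longrightarrow\cI_{Z\vert F_0}(-\xi_0-f)\longrightarrow0.
\end{equation*}
Every claimed property is then read off from this sequence and the geometry of $Z$: $\mu$--stability via Hoppe's criterion (Lemma \ref{lHoppe}) by checking $h^0\big(F_0,\cE(-a\xi_0-bf)\big)=0$ in the relevant range, earnestness via Theorem \ref{tSimplifySegre} once $\cE$ is known to be an instanton, and $\Ext^2=0$ from the normal bundles $\cN_{Z\vert F_0}$ restricted to the $M_i$ and $N_j$. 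Since the theorem is a statement about the bundles produced by that specific construction, a proof that some general monad cohomology has these properties --- even if completed --- would establish a different (at best parallel) statement.

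Independently of this, your monad route has a genuine hole at its central step, $\mu$--stability. You propose to rule out a destabilizing line subsheaf of slope zero via $h^0\big(F_0,\cE\big)=0$; on a threefold with $\rk\Pic=2$ this is insufficient, as the paper stresses in its introduction. By Lemma \ref{lHoppe}, stability requires $h^0\big(F_0,\cE(-a\xi_0-bf)\big)=0$ for \emph{every} pair $(a,b)$ with $12a+9b\ge-27$, and this includes classes such as $(a,b)=(3,-4)$ which are neither zero nor effective, hence not controlled by $h^0\big(F_0,\cE\big)=0$. Worse, even $\mu$--semistability can fail for cohomologies of monads of exactly this shape: Remark \ref{rReferee2} exhibits, for $(\alpha,\beta,\gamma)=(2,4,2)$, a monad \eqref{MonadSegre} whose cohomology is $\cO_{F_0}(-2\xi_0)\oplus\cO_{F_0}(-\xi_0-2f)$, which is not $\mu$--semistable. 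So the ``general position'' claim on which your whole argument rests genuinely needs a proof, and your sketch does not contain one --- this is precisely the difficulty the paper avoids by constructing $\cE$ from curves instead. Two smaller slips: instanton bundles on $F_0$ have $c_1(\cE)=-h\ne0$ (since $i_{F_0}=1$ is odd), so $\cE^\vee\cong\cE(h)$ rather than $\cE^\vee\cong\cE$; and $F_0$ contains no lines at all (Remark \ref{rLine}: $2a+3b=1$ has no solution with $a,b\ge0$), so generic triviality holds vacuously and your restriction-to-a-ruling computation addresses curves that are not lines in the sense of Definition \ref{dEarnest}.
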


\subsection{Acknowledgements}
The authors would like to express their thanks to the referee for her/his criticisms, questions, remarks and suggestions which have considerably improved the whole exposition.

\section{General facts}
\label{sGeneral}
We list below some general helpful results used throughout the whole paper.  Let $X$ be any smooth projective variety with canonical line bundle $\omega_X$.

If $\cG$ and $\mathcal H$ are coherent sheaves on $X$, then  the Serre duality holds
\begin{equation}
\label{Serre}
\Ext_X^i\big(\mathcal H,\cG\otimes\omega_X\big)\cong \Ext_X^{\dim(X)-i}\big(\cG,\mathcal H\big)^\vee
\end{equation}
(see \cite[Proposition 7.4]{Ha3}). 

Let $\cF$ be a vector bundle of rank $2$ on $X$ and let $s\in H^0\big(X,\cF\big)$. In general its zero--locus
$(s)_0\subseteq X$ is either empty or its codimension is at most
$2$. We can always write $(s)_0=S\cup Z$
where $Z$ has codimension $2$ (or it is empty) and $S$ has pure codimension
$1$ (or it is empty). In particular $\cF(-S)$ has a section vanishing
on $Z$, thus we can consider its Koszul complex 
\begin{equation}
  \label{seqSerre}
  0\longrightarrow \cO_X(S)\longrightarrow \cF\longrightarrow \cI_{Z\vert X}(-S)\otimes\det(\cF)\longrightarrow 0.
\end{equation}
Sequence \ref{seqSerre} tensored by $\cO_Z$ yields $\cI_{Z\vert X}/\cI^2_{Z\vert X}\cong\cF^\vee(S)\otimes\cO_Z$, whence
\begin{equation}
\label{Normal}
\cN_{Z\vert X}\cong\cF(-S)\otimes\cO_Z.
\end{equation}
If $S=0$, then $Z$ is locally complete intersection inside $X$, because $\rk(\cF)=2$. In particular, it has no embedded components.

The above construction can be reversed by the Serre correspondence as follows.

\begin{theorem}
  \label{tSerre}
  Let $Z\subseteq X$ be a local complete intersection subscheme of codimension $2$.
  
  If $\det(\cN_{Z\vert X})\cong\cO_Z\otimes\mathcal L$ for some $\mathcal L\in\Pic(X)$ such that $h^2\big(X,\mathcal L^\vee\big)=0$, then there exists a vector bundle $\cF$ of rank $2$ on $X$ such that:
  \begin{enumerate}
  \item $\det(\cF)\cong\mathcal L$;
  \item $\cF$ has a section $s$ such that $Z$ coincides with the zero locus $(s)_0$ of $s$.
  \end{enumerate}
  Moreover, if $H^1\big(X,{\mathcal L}^\vee\big)= 0$, the above two conditions  determine $\cF$ up to isomorphism.
\end{theorem}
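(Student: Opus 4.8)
The plan is to realize $\cF$ through the Hartshorne--Serre extension construction, producing it as the middle term of a short exact sequence
\begin{equation*}
0\longrightarrow\cO_X\longrightarrow\cF\longrightarrow\cI_{Z\vert X}\otimes\mathcal L\longrightarrow0,
\end{equation*}
whose isomorphism classes are parametrized by $\Ext^1_X\big(\cI_{Z\vert X}\otimes\mathcal L,\cO_X\big)$. Once such an $\cF$ is shown to be locally free of rank $2$, both required properties will follow immediately: the inclusion $\cO_X\hookrightarrow\cF$ furnishes a section $s$ whose cokernel is $\cI_{Z\vert X}\otimes\mathcal L$, so that $(s)_0=Z$, giving (2); and restricting the sequence to the complement $X\setminus Z$ (of codimension $2$), where $\cI_{Z\vert X}$ is trivial, yields $\det\cF\cong\mathcal L$ away from a codimension $2$ set, hence everywhere on the smooth variety $X$, giving (1). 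The whole difficulty is therefore concentrated in producing an extension class whose associated $\cF$ is a bundle.

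First I would compute the sheaves $\sExt^q_X\big(\cI_{Z\vert X}\otimes\mathcal L,\cO_X\big)$ using the hypothesis that $Z$ is a local complete intersection of codimension $2$. Since $\cI_{Z\vert X}$ is torsion--free of rank $1$ on the smooth $X$, one has $\sExt^0_X\big(\cI_{Z\vert X}\otimes\mathcal L,\cO_X\big)\cong\mathcal L^\vee$. Dualizing the structure sequence $0\to\cI_{Z\vert X}\to\cO_X\to\cO_Z\to0$ identifies, for $q\ge1$, the sheaf $\sExt^q_X\big(\cI_{Z\vert X}\otimes\mathcal L,\cO_X\big)$ with $\sExt^{q+1}_X\big(\cO_Z,\cO_X\big)\otimes\mathcal L^\vee$. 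The local complete intersection condition makes $\sExt^{j}_X\big(\cO_Z,\cO_X\big)$ vanish for $j\ne2$, while the fundamental local isomorphism (adjunction for lci subschemes) gives $\sExt^2_X\big(\cO_Z,\cO_X\big)\cong\omega_Z\otimes\omega_X^\vee\vert_Z\cong\det(\cN_{Z\vert X})$. Combining these and invoking the hypothesis $\det(\cN_{Z\vert X})\cong\cO_Z\otimes\mathcal L$ yields
\begin{equation*}
\sExt^1_X\big(\cI_{Z\vert X}\otimes\mathcal L,\cO_X\big)\cong\det(\cN_{Z\vert X})\otimes\mathcal L^\vee\vert_Z\cong\cO_Z,
\end{equation*}
with all higher local Ext sheaves vanishing. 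This identification is the key technical input and I expect the careful handling of the fundamental local isomorphism to be the main obstacle.

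Next I would feed these computations into the local--to--global spectral sequence $E_2^{p,q}=H^p\big(X,\sExt^q_X(\cI_{Z\vert X}\otimes\mathcal L,\cO_X)\big)\Rightarrow\Ext^{p+q}_X\big(\cI_{Z\vert X}\otimes\mathcal L,\cO_X\big)$. Its five--term exact sequence reads
\begin{equation*}
0\to H^1\big(X,\mathcal L^\vee\big)\to\Ext^1_X\big(\cI_{Z\vert X}\otimes\mathcal L,\cO_X\big)\to H^0\big(Z,\cO_Z\big)\xrightarrow{\ d_2\ }H^2\big(X,\mathcal L^\vee\big).
\end{equation*}
The hypothesis $h^2\big(X,\mathcal L^\vee\big)=0$ forces $d_2=0$, so the edge map onto $H^0\big(Z,\cO_Z\big)$ is surjective. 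I would then pick an extension class $e$ mapping to the unit section $1\in H^0\big(Z,\cO_Z\big)$. The standard local freeness criterion for Serre extensions states that the middle term $\cF$ is locally free precisely when the image of $e$ in $H^0\big(X,\sExt^1_X(\cI_{Z\vert X}\otimes\mathcal L,\cO_X)\big)=H^0\big(Z,\cO_Z\big)$ generates the sheaf $\sExt^1\cong\cO_Z$ at every point; since $1$ is nowhere vanishing, this holds, and $\cF$ is a rank $2$ bundle, completing the existence part.

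Finally, for uniqueness under the stronger assumption $H^1\big(X,\mathcal L^\vee\big)=0$, the five--term sequence collapses to an isomorphism $\Ext^1_X\big(\cI_{Z\vert X}\otimes\mathcal L,\cO_X\big)\cong H^0\big(Z,\cO_Z\big)$. Hence the class mapping to the generator $1$ is unique up to a nonzero scalar, and proportional extension classes produce isomorphic middle terms; therefore $\cF$ is determined up to isomorphism, as claimed.
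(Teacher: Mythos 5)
Your existence argument is essentially the standard Ext-theoretic proof of the Hartshorne--Serre correspondence, and it is correct; note that the paper itself offers no proof beyond the citation of \cite{Ar}, whose ``home-made'' argument deliberately avoids the spectral-sequence machinery you invoke but has the same mathematical content. Your chain --- the identification $\sExt^1_X\big(\cI_{Z\vert X}\otimes\mathcal L,\cO_X\big)\cong\cO_Z$ from the lci hypothesis and the fundamental local isomorphism, the surjectivity of $\Ext^1_X\big(\cI_{Z\vert X}\otimes\mathcal L,\cO_X\big)\to H^0\big(Z,\cO_Z\big)$ forced by $h^2\big(X,\mathcal L^\vee\big)=0$, and the local-freeness criterion applied to a class mapping to $1$ --- is the right skeleton for parts (1) and (2).

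The uniqueness step is where there is a genuine gap. A bundle $\cF'$ satisfying (1) and (2) produces a Koszul extension whose class maps to \emph{some} nowhere-vanishing section of $\cO_Z$, that is, to an arbitrary unit of the ring $H^0\big(Z,\cO_Z\big)$, not necessarily to a scalar multiple of $1$; your argument handles only the classes in $\bC\cdot 1$. It therefore proves uniqueness only when $H^0\big(Z,\cO_Z\big)=\bC$, essentially when $Z$ is connected and reduced. This matters here, because the paper applies the uniqueness clause precisely to disconnected $Z$ (the disjoint unions of rational curves of Constructions \ref{conInstanton}, \ref{conEarnest} and \ref{conSegre}), where $h^0\big(Z,\cO_Z\big)$ equals the number of components. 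Moreover the step does not merely need a better argument: it can actually fail in this generality. Take $X=\p3$, $Z=L_1\cup L_2$ two disjoint lines and $\mathcal L=\cO_{\p3}(2)$, so that $h^1\big(X,\mathcal L^\vee\big)=h^2\big(X,\mathcal L^\vee\big)=0$ and $\Ext^1_X\big(\cI_{Z\vert X}\otimes\mathcal L,\cO_X\big)\cong H^0\big(Z,\cO_Z\big)\cong\bC^2$. The classes with both coordinates nonzero yield rank $2$ bundles with determinant $\cO_{\p3}(2)$ and a section vanishing exactly on $Z$; among them one finds the twists $N(1)$ of the null-correlation bundles $N$ attached to the pencil of symplectic forms on $\bC^4$ vanishing on both lines, and these are pairwise non-isomorphic stable bundles (a null-correlation bundle determines its symplectic form up to scalar, e.g.\ through its jumping lines). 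Hence non-proportional units really do produce non-isomorphic bundles, and the uniqueness conclusion needs the extra hypothesis $h^0\big(Z,\cO_Z\big)=1$ --- or must be weakened to uniqueness of the pair $(\cF',s)$ up to equivalence of extensions. Your proposal neither flags nor bridges this point, and this is exactly the delicate part of the clause you were asked to prove.
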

\begin{proof}
See \cite{Ar}.
\end{proof}

The Riemann--Roch formula for a vector bundle $\cF$ on a threefold $X$ is
\begin{equation}
  \label{RRgeneral}
  \begin{aligned}
    \chi(\cF)&=\rk(\cF)\chi(\cO_X)+{\frac16}(c_1(\cF)^3-3c_1(\cF)c_2(\cF)+3c_3(\cF))\\
    &-{\frac14}(\omega_Xc_1(\cF)^2-2\omega_Xc_2(\cF))+{\frac1{12}}(\omega_X^2c_1(\cF)+c_2(\Omega_{X}) c_1(\cF))
  \end{aligned}
\end{equation}
(see \cite[Theorem A.4.1]{Ha2}). 

We close the section by listing some results on instanton bundles which hold true on Fano threefold $X$ with $i_X=1$. The first result is the following trivial specialization of Formula \eqref{Serre} for bundles $\cF$ with $c_1(\cF)=-h$:
\begin{equation}
\label{Serre1}
h^i\big(X,\cF(D)\big)=h^{3-i}\big(X,\cF(-D)\big)
\end{equation}
for each line bundle $\cO_X(D)\in\Pic(X)$. In particular $h^0\big(X,\cF\big)=h^3\big(X,\cF\big)$, hence the following lemma is easy to prove. Moreover, $\chi(\cO_X)=1$ and
\begin{equation}
\label{c_1c_2}
c_2(\Omega_{X})c_1(\cF)={-24}
\end{equation}
(see \cite[Exercise A.6.7]{Ha2}). 

\begin{lemma}
\label{lNatural}
Let $X$ be a Fano threefold with $i_X=1$.

A $\mu$--semistable bundle $\cE$ of rank $2$ on $X$ such that $c_1(\cE)=-h$ is an instanton bundle if and only if $h^i\big(X,\cE\big)=0$ for each $i$.
\end{lemma}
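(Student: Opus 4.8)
The plan is to observe that, under the hypotheses, the instanton conditions collapse onto the Serre-duality identity \eqref{Serre1}. First I would unwind Definition \ref{dInstanton} in the case $i_X=1$: here $q_X=\left[\frac{i_X}2\right]=0$, so $\cE(-q_Xh)=\cE$ and the three bulleted conditions become $c_1(\cE)=-h$, $\mu$-semistability together with $h^0\big(X,\cE\big)=0$, and $h^1\big(X,\cE\big)=0$. Since $\mu$-semistability and $c_1(\cE)=-h$ are already assumed in the statement of the lemma, being an instanton bundle is, for such an $\cE$, \emph{equivalent} to the pair of vanishings $h^0\big(X,\cE\big)=h^1\big(X,\cE\big)=0$. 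With this reformulation the reverse implication is immediate: if $h^i\big(X,\cE\big)=0$ for every $i$, then in particular $h^0=h^1=0$, so $\cE$ is an instanton bundle.

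For the forward implication the only remaining point is to force the two extra vanishings $h^2\big(X,\cE\big)=h^3\big(X,\cE\big)=0$. This is where I would apply \eqref{Serre1} with $D=0$, which reads $h^0\big(X,\cE\big)=h^3\big(X,\cE\big)$ and $h^1\big(X,\cE\big)=h^2\big(X,\cE\big)$. Hence $h^0=0$ yields $h^3=0$ and $h^1=0$ yields $h^2=0$, so all four cohomology groups vanish. Combining the two directions proves the equivalence.

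The main (and essentially the only) point demanding care is the applicability of \eqref{Serre1}, which rests on the two structural facts recalled just before the lemma: that $\omega_X\cong\cO_X(-h)$ when $i_X=1$, and that a rank $2$ bundle with $c_1(\cE)=-h$ satisfies $\cE^\vee\cong\cE(h)$ (since $\cE^\vee\cong\cE\otimes\det(\cE)^\vee$ and $\det(\cE)\cong\cO_X(-h)$). Together these give $(\cE(D))^\vee\otimes\omega_X\cong\cE(-D)$, so the general Serre duality \eqref{Serre} specializes exactly to the stated form. Once \eqref{Serre1} is granted there is no genuine obstacle: the argument is a one-line bookkeeping of cohomological dimensions, requiring no positivity or vanishing theorem beyond the Fano/Serre-duality setup.
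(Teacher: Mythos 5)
Your proof is correct and follows essentially the same route as the paper's: unwind Definition \ref{dInstanton} with $q_X=0$ so that being an instanton reduces to $h^0\big(X,\cE\big)=h^1\big(X,\cE\big)=0$, then use the specialization \eqref{Serre1} of Serre duality (valid since $\omega_X\cong\cO_X(-h)$ and $\cE^\vee\cong\cE(h)$) to obtain $h^2=h^3=0$, the converse being immediate from the definition. The paper's proof is just a terser statement of exactly this argument.
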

\begin{proof}
If $\cE$ is an instanton bundle the statement follows from the definition and Equality \eqref{Serre1}. The converse is true by definition.
\end{proof}

If $\cE$ is an instanton bundle on $X$, then we know that $\cE\otimes\cO_H$ is $\mu$--semistable for each a general hyperplane section $H$ of $X$ thanks to \cite[Theorem 3.1]{Ma}), hence the Bogomolov inequality for $\cE\otimes\cO_H$ yields 
\begin{equation}
\label{Minimal}
c_2(\cE) h\ge\frac{\deg(X)}4.
\end{equation}
Moreover if $\cE$ is also simple, then $\dim\Hom_{X}\big(\cE,\cE\big)=1$. It follows from Equality \eqref{Serre} that
$$
\Ext^3_{X}\big(\cE,\cE\big)^\vee\cong\Hom_{X}\big(\cE,\cE(-h)\big)\subseteq\Hom_{X}\big(\cE,\cE\big).
$$
If $\varphi\in\Hom_{X}\big(\cE,\cE(-h)\big)$, then $\det(\varphi)\in H^0\big(X,\cO_X(-2h)\big)=0$. Since, being $\cE$ simple, each non zero endomorphism of $\cE$ is an automorphism, it follows that $\varphi=0$, i.e.
\begin{equation}
\label{Ext3}
\dim\Ext^3_{X}\big(\cE,\cE\big)=0
\end{equation}
Thus Formula \eqref{RRgeneral} for $\cE\otimes\cE^\vee$ yields
\begin{equation}
\label{Ext12}
\dim\Ext^1_{X}\big(\cE,\cE\big)-\dim\Ext^2_{X}\big(\cE,\cE\big)=2c_2(\cE) h-\frac{\deg(X)}2-3.
\end{equation}

\section{The threefolds $F_0$ and $F_1$}
\label{sFano}
In this section we list all the basic results on the two threefolds $F_0$ and $F_1$ that we will use in the next sections.

The threefold $F_0=\p1\times\p2$ is trivially endowed with the projections $\sigma_0\colon F_0\to \p2$ and $\pi\colon F_0\cong\bP(\cP_0)\to\p1$, where $\cP_0:=\cO_{\p1}^{\oplus3}$. The classes $\xi_0$ and $f$ of $\sigma^*_0\cO_{\p2}(1)\cong \cO_{\bP(\cP_0)}(1)$ and $\pi^*\cO_{\p1}(1)$ are obviously globally generated. 

Also $F_1$ is endowed with two natural morphisms, the blow up map $\sigma_1\colon F_1\to \p3$ and the natural projection $\pi\colon F_1\cong\bP(\cP_1)\to\p1$, where $\cP_1:=\cO_{\p1}^{\oplus2}\oplus\cO_{\p1}(1)$. Since the normal bundle of the blown up $R$ inside $\p3$ satisfies $\cN_{R\vert\p3}\cong\cO_{\p1}(1)^{\oplus2}$, it follows that $E:=\sigma_{1}^{-1}(R)\cong\p1\times\p1$ and $\sigma_1$ induces an isomorphism $F_1\setminus \sigma_1^{-1}(R)\cong\p3\setminus\{\ R\ \}$. Recall that $\xi_1$ and $f$ are the classes of $\cO_{\bP(\cP_1)}(1)$ and $\pi^*\cO_{\p1}(1)$ respectively.  Trivially $\pi^*\cO_{\p1}(1)$ is globally generated. Since $\cP_1$ is globally generated, it follows that the same holds for $\cO_{F_1}(\xi_1)\cong\cO_{\bP(\cP_1)}(1)$: moreover, $\cO_{F_1}(\xi_1)\cong \sigma_1^*\cO_{\p3}(1)$. 

In both the cases we have an embedding $F_e\subseteq\p{29}$ induced by the linear system $\cO_{F_e}(h_e)=\cO_{F_e}(3\xi_{e}+(2-e)f)$ and $\omega_{F_e}\cong\cO_{F_e}(-h_e)$: in particular $F_e$ is a Fano threefold with $i_{F_e}=1$ and $\deg(F_e)=h_e^3=54$. 

If $e=1$, then let $H\subseteq\p3$ be a plane through $R$. On the one hand, $\sigma_1^{-1}(H)$ is in the class of $\xi_1$. On the other hand, $\sigma_1^{-1}(H)$ is the union of $E$ with the strict transform of $H$. Such a strict transform is in the linear system $\vert f\vert$, hence $E$ is the unique element in $\vert \xi_1-f\vert$. Notice that $Eh_1^2=6$.

Recall that  $\xi_e^3=e\xi_e^2 f$, and $\xi_e^2f$ is the class of a point. The morphism $\pi$ is smooth, hence we have the relative Euler exact sequence
\begin{equation}
\label{seqOmega}
0\longrightarrow\Omega_{F_e\vert \p1}\longrightarrow\cO_{F_e}(-\xi_e)^{\oplus2}\oplus\cO_{F_e}(-\xi_e+ef)\longrightarrow\cO_{F_e}\longrightarrow0.
\end{equation}
and the exact sequence of sheaves of differentials
\begin{equation*}
0\longrightarrow\cO_{F_e}(-2f)\longrightarrow\Omega_{F_e}\longrightarrow\Omega_{F_e\vert \p1}\longrightarrow0
\end{equation*}
A simple Chern class computation then yields $c_2(\Omega_{F_e})=3\xi_e^2+(6-2e)\xi_e f$. In particular, if $\cE$ is an instanton bundle with charge $\alpha\xi_e^2+\beta \xi_e f$ on $F_e$, then $c_1(\cE)=-h_e$, hence Equalities \eqref{RRgeneral} and \eqref{c_1c_2} yield
\begin{equation}
\label{RRFano}
\chi(\cE(a\xi_e+b f))=e\left(\frac{a^3}3+\frac{2a}3-a\alpha\right)+a^2b+3a+2b-b\alpha-a\beta.
\end{equation}

Notice that the pull--back via $\pi$ of the Euler sequence on $\p1$ returns the exact sequence
\begin{equation}
\label{seqEuler}
0\longrightarrow\cO_{F_e}(-f)\longrightarrow\cO_{F_e}^{\oplus2}\longrightarrow\cO_{F_e}(f)\longrightarrow0.
\end{equation}

We now describe three interesting families of smooth rational curves inside $F_e$.

\begin{remark}
\label{rLine}
Let $L$ be a line on $F_e$, i.e. a curve such that $Lh_e=1$. If we denote by $a\xi_e^2+b\xi_e f$ its class in $A^2(F_e)$, then we must have
$$
1=(a\xi_e^2+b\xi_e f)(3\xi_e+(2-e)f)=2(1+e)a+3b.
$$ 

Since $\cO_{F_e}(f)$ and $\cO_{F_e}(\xi_e)$ are globally generated, it follows that
\begin{equation}
\label{Positive}
a=Lf\ge0,\qquad ae+b=L\xi_e\ge0.
\end{equation}
Thus, $e=1$ necessarily.

If $b=LE\ge0$, then $a\le0$, hence $a=b=0$ necessarily. It follows that $b\le-1$, hence $0\le 4(a+b)=1+b\le0$ finally yields $a=1$ and $b=-1$, hence the class of $L$ is $\xi_1^2-\xi_1 f$. Notice that in this case $L\subseteq E$ because $L$ is integral and $LE=-1$. 

In particular $L$ is cut out on $E$ by a divisor in $\vert \xi_1\vert$. The cohomology of the exact sequence
$$
0\longrightarrow\cO_{F_1}(-\xi_1+f)\longrightarrow\cO_{F_1}\longrightarrow\cO_E\longrightarrow0
$$
tensored by $\cO_{F_1}(\xi_1)$, the isomorphism $\pi_*\cO_{F_1}(\xi_1)\cong\cP_1$ (see \cite[Exercise III.8.4 (a)]{Ha2}) and \cite[Exercises III.8.1 and III.8.3]{Ha2} imply that the linear system $\vert L\vert$ on the surface $E\cong\p1\times\p1$ has dimension $1$, hence $\vert L\vert$ is one of the rulings of lines on $E$: in particular distinct elements in $\vert L\vert$ do not intersect each other. The Hilbert scheme $\Lambda$ of lines inside ${F_1}$ is then isomorphic to $\p1$ and $\cO_L$ fits into the exact sequence
\begin{equation*}
0\longrightarrow\cO_{F_1}(-2\xi_1+f)\longrightarrow\cO_{F_1}(-\xi_1+f)\oplus\cO_{F_1}(-\xi_1)\longrightarrow\cO_{F_1}\longrightarrow\cO_L\longrightarrow0.
\end{equation*}
Restricting the above sequence to $L$ we finally obtain $\cN_{L\vert F_1}\cong\cO_{\p1}\oplus\cO_{\p1}(-1)$. 
Conversely, the intersection $L$ of general elements in $\vert \xi_1-f\vert$ and $\vert \xi_1\vert$ is a smooth curve. Since $Lh_1=1$, it follows that $L$ represents a point in $\Lambda$, thanks to the Bertini theorem.
\end{remark}

\begin{remark}
\label{rPseudoConic}
If $e=0$, let $M$ be a fibre of $\sigma_0$. If $e=1$ let $M$ be the pull--back of a line not intersecting the blown up line $R\subseteq\p3$. 

Trivially $M\cong\p1$, its class inside $A^2(F_e)$ is $\xi_e^2$ and we have $Mh_e=2(1+e)$. Consider now the very ample line bundle $\cO_{F_e}(\widehat{h}_e):=\cO_{F_e}(\xi_e+f)$: it is easy to check that $M\widehat{h}_e=1+e$. In what follows we will denote by $\Lambda_M$ the Hilbert scheme of curves in $F_e$ obtained as described above: $\Lambda_M$ is isomorphic to $\p2$ if $e=0$ and to an open set of the Grassmann variety of lines in $\p3$ if $e=1$, hence it is irreducible and rational of dimension $2(1+e)$. 

Notice that not all curves in the class $\xi_1^2\in A^2({F_1})$ represent a point in $\Lambda_M$: e.g. every union of a curve in $\Lambda$ with a curve with class $\xi_1f$ has class $\xi_1^2$. 

The structure sheaf $\cO_M$ fits into the exact sequence
\begin{equation*}
0\longrightarrow\cO_{F_e}(-2\xi_e)\longrightarrow\cO_{F_e}(-\xi_e)^{\oplus2}\longrightarrow\cO_{F_e}\longrightarrow\cO_M\longrightarrow0.
\end{equation*}
In particular we have $\cN_{M\vert {F_e}}\cong\cO_{\p1}(e)^{\oplus2}$.  
Conversely, the intersection $M$ of two general elements in $\vert \xi_e\vert$ is a smooth curve representing a point in $\Lambda_M$ by the Bertini theorem.

Clearly, distinct general elements in $\Lambda_M$ do not intersect each other and it is easy to check that they similarly do not intersect the general element in $\Lambda$.
\end{remark}

\begin{remark}
\label{rPseudoLine}
In the Remark \ref{rLine} we dealt with lines on $F_e$ embedded in $\p{29}$ via $\cO_{F_e}(h_e)$. 

It is easy to check that every line $L$ on $F_e$ also satisfies $L\widehat{h}_e=1$. Conversely, if $N$ is any curve with class $a\xi_e^2+b\xi_e f$ such that $N\widehat{h}_e=1$, then $N\cong\p1$, because $\cO_{F_e}(\widehat{h}_e)$ is very ample. Moreover, 
$$
1=(a\xi_e^2+b\xi_e f)(\xi_e+f)=(1+e)a+b,
$$
where $a$ and $ea+b$ are still non--negative, hence $0\le a\le1$ and $0\le ea+b\le1$. 

If $a=0$, then $b=1$, i.e. the class of $N$ is $\xi_e f$. If $a=1$, then $ea+b=0$. If $e=0$, then $b=0$, i.e. the class of $N$ is $\xi_e^2$: if $e=1$, then $b=-1$ and the class of $N$ is $\xi_1^2-\xi_1 f$. The latter case has been studied in the  Remark \ref{rLine}, while the former case has been described in Remark \ref{rPseudoConic}.

Let us deal with the case $a=0$ and $b=1$. To this purpose, we will denote by $\Lambda_N$ the Hilbert scheme of curves in $F_e$ whose class in $A^2(F_e)$ is $\xi_e f$. The equality $Nf=0$ implies that $N$ is contained in a fibre of $\pi$, hence $N$ is cut out on that fibre by a divisor in the linear system $\vert\xi_e\vert$. In particular  $\Lambda_N$ is dominated by a projective bundle on $\vert f\vert$ with fibre $\vert\xi_e\vert$, hence it is irreducible and rational of dimension $3$. 

The structure sheaf $\cO_N$ fits into the exact sequence
\begin{equation*}
0\longrightarrow\cO_{F_e}(-\xi_e-f)\longrightarrow\cO_{F_e}(-\xi_e)\oplus\cO_{F_e}(-f)\longrightarrow\cO_{F_e}\longrightarrow\cO_N\longrightarrow0.
\end{equation*}
In particular we have $\cN_{N\vert {F_e}}\cong\cO_{\p1}\oplus\cO_{\p1}(1)$. 
Conversely, the intersection $N$ of two general elements in $\vert \xi_e\vert$ and $\vert f\vert$ is a smooth curve representing a point in $\Lambda_N$ by the Bertini theorem.

Moreover, being both $\cO_{F_e}(\xi_e)$ and $\cO_{F_e}(f)$ are globally generated, we know that distinct general elements in $\Lambda_N$ do not intersect each other: for the same reason they do not intersect the general elements in $\Lambda$ and $\Lambda_M$.
\end{remark}

We close this section  by stating the following lemma which will also widely used  in the next sections.

\begin{lemma}
\label{lHoppe}
Let $\cG$ be a rank $2$ vector bundle on $F_e$.

Then $\cG$ is $\mu$--stable (resp. $\mu$--semistable) with respect to $\cO_{F_e}(h)$ if and only if  $h^0\big(F_e,\cG(-a\xi_e-b f)\big)=0$ for each $a,b \in\bZ$ such that $3(1+e)a+9(a+b) \ge\mu(\cG)$ (resp. $>\mu(\cG)$).
\end{lemma}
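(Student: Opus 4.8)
The plan is to read the statement as a form of Hoppe's criterion for a rank $2$ bundle, and to reduce $\mu$--(semi)stability — which by definition is tested against every subsheaf of intermediate rank — to the vanishing of $H^0$ of suitable twists of $\cG$.

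First I would compute the slope of a line bundle on $F_e$. Using $f^2=0$, $\xi_e^3=e\xi_e^2f$ and that $\xi_e^2f$ is the class of a point, a direct Chern class computation with $h=h_e=3\xi_e+(2-e)f$ gives
\[
h^2=9\xi_e^2+6(2-e)\xi_ef,\qquad (a\xi_e+bf)h^2=3(1+e)a+9(a+b).
\]
Hence the integer $3(1+e)a+9(a+b)$ occurring in the statement is precisely $\mu\big(\cO_{F_e}(a\xi_e+bf)\big)=c_1\big(\cO_{F_e}(a\xi_e+bf)\big)h^2$. Moreover, since $\rk(\cG)=2$, the only subsheaves relevant to the definition of $\mu$--(semi)stability are those of rank $1$, so that $\cG$ is $\mu$--stable (resp. $\mu$--semistable) if and only if every rank $1$ subsheaf $\cF\subseteq\cG$ satisfies $\mu(\cF)<\mu(\cG)$ (resp. $\mu(\cF)\le\mu(\cG)$).

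The heart of the argument is the reduction from an arbitrary rank $1$ subsheaf to a line subbundle, without decreasing the slope. Given a rank $1$ subsheaf $\cF\subseteq\cG$, I would first pass to its saturation $\overline{\cF}$ in $\cG$ (so that $\cG/\overline{\cF}$ is torsion--free): since $\overline{\cF}/\cF$ is a torsion sheaf, the difference $c_1(\overline{\cF})-c_1(\cF)=c_1(\overline{\cF}/\cF)$ is effective, and $h$ being ample gives $\mu(\overline{\cF})\ge\mu(\cF)$. The reflexive hull $\mathcal L:=\overline{\cF}^{\,\vee\vee}$ is a rank $1$ reflexive sheaf on the smooth variety $F_e$, hence a line bundle, and the inclusion $\overline{\cF}\hookrightarrow\cG$ induces a map $\mathcal L\to\cG^{\vee\vee}=\cG$ which agrees with $\overline{\cF}\hookrightarrow\cG$ off a codimension $2$ locus, hence is nonzero and satisfies $\mu(\mathcal L)=\mu(\overline{\cF})\ge\mu(\cF)$. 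As $\Pic(F_e)=\bZ\xi_e\oplus\bZ f$ we may write $\mathcal L=\cO_{F_e}(a\xi_e+bf)$, and because $\mathcal L$ is a line bundle and $\cG$ is torsion--free the map $\mathcal L\to\cG$ is automatically injective (its kernel would be a torsion subsheaf of $\mathcal L$). Thus $\mathcal L\hookrightarrow\cG$ is a nonzero element of $\Hom(\mathcal L,\cG)=H^0\big(F_e,\cG\otimes\mathcal L^\vee\big)=H^0\big(F_e,\cG(-a\xi_e-bf)\big)$. Conversely, any nonzero section of $\cG(-a\xi_e-bf)$ yields an injection $\cO_{F_e}(a\xi_e+bf)\hookrightarrow\cG$, i.e.\ a rank $1$ subsheaf of slope $3(1+e)a+9(a+b)$.

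Combining these observations, $\cG$ possesses a rank $1$ subsheaf of slope $\ge\mu(\cG)$ (resp. $>\mu(\cG)$) if and only if there exist integers $a,b$ with $3(1+e)a+9(a+b)\ge\mu(\cG)$ (resp. $>\mu(\cG)$) and $h^0\big(F_e,\cG(-a\xi_e-bf)\big)\ne0$. Negating these two statements yields exactly the asserted equivalences, the stable case corresponding to the threshold $\ge\mu(\cG)$ and the semistable case to $>\mu(\cG)$. I expect the only genuinely nonformal point to be this reduction to a line subbundle — that is, verifying that saturation followed by taking the reflexive hull produces a line bundle mapping into $\cG$ with slope no smaller than the original subsheaf; everything else is the slope computation and careful bookkeeping of the two inequalities. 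Alternatively, one could simply invoke the known extension of Hoppe's criterion to varieties of Picard rank greater than one.
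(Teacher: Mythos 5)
Your proof is correct, and it takes a genuinely different route from the paper: the paper's entire proof is a one\--line citation, observing that $\Pic(F_e)$ is freely generated by $\xi_e$ and $f$ and then invoking the generalized Hoppe criterion of Jardim--Menet--Prata--S\'a Earp \cite[Corollary 4]{J--M--P--S} (with a pointer to \cite{Hop}), which is exactly the alternative you mention in your last sentence. What you do instead is reprove that criterion from scratch in the rank $2$ case: the slope computation $(a\xi_e+bf)h^2=3(1+e)a+9(a+b)$ identifies the integer in the statement as the slope of $\cO_{F_e}(a\xi_e+bf)$, and the saturation\--plus\--reflexive\--hull argument correctly reduces an arbitrary rank $1$ subsheaf to a line subbundle of no smaller slope; since $\rk(\cG)=2$ forces all destabilizing subsheaves to have rank $1$, and every rank $1$ reflexive sheaf on the smooth $F_e$ is a line bundle of the form $\cO_{F_e}(a\xi_e+bf)$, the equivalence with the vanishing of $H^0\big(F_e,\cG(-a\xi_e-bf)\big)$ follows, with the two inequality thresholds matched correctly to stability and semistability. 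Your version buys self\--containedness and makes transparent why the criterion is unproblematic in Picard rank $2$ for rank $2$ bundles (the subtleties of the generalized Hoppe criterion only arise for higher rank, where subsheaves must be detected through exterior powers); the paper's version buys brevity and defers all of this to the literature. Both are complete proofs of the lemma.
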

\begin{proof}
The group $\Pic(F_e)$ is generated by the classes of $\xi_e$ and $f$, hence it suffices to apply \cite[Corollary 4]{J--M--P--S}: see also \cite{Hop}.
\end{proof}

\section{Monadic description of instanton bundles on the blow up of $\p3$}
\label{sMonad}
In Sections \ref{sMonad}, \ref{sInstanton}, \ref{sEarnest} and \ref{sFinal} we deal with the blow up $F_1$ of $\p3$ along a line $R$. For this reason we will omit the $e=1$ in the subscripts, simply writing $F$, $\xi$, $\sigma$, $\cP$, $h$, $\cC^\bullet$. In this case $\xi^3=\xi^2f=1$ and $3(1+e)a+9(a+b)=15a+9b$ in Lemma \ref{lHoppe}.

 In this section we will construct a monad associated to each instanton bundle on $F$. In what follows we repeatedly need the cohomology of $\cO_F(a\xi+b f)$. We compute it in the  next proposition. 

\begin{proposition}
\label{pLineBundle}
We have
$$
\begin{aligned}
h^0\big(F,\cO_F(a\xi+b f)\big)&=\sum_{j=1}^{a+1}j{a+b+2-j\choose1},\\
h^1\big(F,\cO_F(a\xi+b f)\big)&=\sum_{j=1}^{a+1}j{-a-b-2+j\choose1},\\
h^2\big(F,\cO_F(a\xi+b f)\big)&=\sum_{j=1}^{-a-2}j{a+b+2+j\choose1},\\
h^3\big(F,\cO_F(a\xi+b f)\big)&=\sum_{j=1}^{-a-2}j{-a-b -2-j\choose1}
\end{aligned}
$$
where the summation is $0$ if the upper limit is smaller than the lower limit.
\end{proposition}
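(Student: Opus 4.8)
The plan is to reduce the computation to cohomology of line bundles on $\p1$ by pushing forward along the projection $\pi\colon F\cong\bP(\cP)\to\p1$, where $\cP=\cO_{\p1}^{\oplus2}\oplus\cO_{\p1}(1)$. Since $\cO_F(a\xi+bf)\cong\cO_{\bP(\cP)}(a)\otimes\pi^*\cO_{\p1}(b)$, the projection formula gives $R^q\pi_*\cO_F(a\xi+bf)\cong R^q\pi_*\cO_{\bP(\cP)}(a)\otimes\cO_{\p1}(b)$, and the Leray spectral sequence
$$
H^p\big(\p1,R^q\pi_*\cO_F(a\xi+bf)\big)\Rightarrow H^{p+q}\big(F,\cO_F(a\xi+bf)\big)
$$
reduces everything to the direct images. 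First I would record these: as $\pi$ is a $\p2$--bundle, one has $R^1\pi_*\cO_{\bP(\cP)}(a)=0$ for every $a$, while $R^0\pi_*\cO_{\bP(\cP)}(a)\cong\mathrm{Sym}^a\cP$ for $a\ge0$ and vanishes for $a<0$. For the top direct image I would invoke relative Serre duality $R^2\pi_*\cO_{\bP(\cP)}(a)\cong\big(\pi_*(\cO_{\bP(\cP)}(-a)\otimes\omega_{F\vert\p1})\big)^\vee$, computing $\omega_{F\vert\p1}\cong\cO_F(-3\xi+f)$ by taking determinants in the relative Euler sequence \eqref{seqOmega} (with $e=1$), since $\det\cP\cong\cO_{\p1}(1)$. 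Hence $R^2\pi_*\cO_{\bP(\cP)}(a)\cong\big(\mathrm{Sym}^{-a-3}\cP\otimes\cO_{\p1}(1)\big)^\vee$ for $a\le-3$ and vanishes otherwise.

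The next step is the explicit decomposition of the symmetric powers. Using $\mathrm{Sym}^c(\cA\oplus\cB)\cong\bigoplus_{p+q=c}\mathrm{Sym}^p\cA\otimes\mathrm{Sym}^q\cB$ together with $\mathrm{Sym}^p(\cO_{\p1}^{\oplus2})\cong\cO_{\p1}^{\oplus(p+1)}$ and $\mathrm{Sym}^q(\cO_{\p1}(1))\cong\cO_{\p1}(q)$, I would obtain, after the reindexing $j:=c+1-q$,
$$
\mathrm{Sym}^c\cP\cong\bigoplus_{j=1}^{c+1}\cO_{\p1}(c+1-j)^{\oplus j},
$$
so that the summand $\cO_{\p1}(c+1-j)$ appears with multiplicity $j$. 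Dualizing and twisting by $\cO_{\p1}(1)$ then yields the analogous splitting $R^2\pi_*\cO_{\bP(\cP)}(a)\cong\bigoplus_{j=1}^{-a-2}\cO_{\p1}(a+1+j)^{\oplus j}$ for $a\le-3$.

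Because only the rows $q=0$ and $q=2$ survive, they are separated by the zero row $q=1$, and $\p1$ contributes only in degrees $p\in\{0,1\}$, there is no room for nonzero differentials and the spectral sequence degenerates, giving $H^i\big(F,\cO_F(a\xi+bf)\big)\cong H^i(\p1,R^0\pi_*\otimes\cO_{\p1}(b))\oplus H^{i-2}(\p1,R^2\pi_*\otimes\cO_{\p1}(b))$. Thus $h^0$ and $h^1$ receive contributions only from $R^0\pi_*$ (hence only when $a\ge0$), while $h^2$ and $h^3$ receive contributions only from $R^2\pi_*$ (hence only when $a\le-3$). Substituting the splittings above and using $h^0\big(\p1,\cO_{\p1}(m)\big)=\binom{m+1}{1}$ and $h^1\big(\p1,\cO_{\p1}(m)\big)=\binom{-m-1}{1}$, with the convention $\binom{n}{1}=0$ for $n\le0$, each of the four sums falls out after matching the twist $m=a+b+1-j$ (for $R^0$) or $m=a+b+1+j$ (for $R^2$). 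This reproduces exactly the four stated formulas, and their empty--sum conventions are precisely the vanishing ranges $a<0$ (for $h^0,h^1$) and $a>-3$ (for $h^2,h^3$) of the direct images.

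The argument is essentially bookkeeping; the only points demanding care are getting the relative canonical bundle right, namely the twist $\omega_{F\vert\p1}\cong\cO_F(-3\xi+f)$ coming from $\det\cP\cong\cO_{\p1}(1)$ (an error here would shift all the arguments of $h^2$ and $h^3$), and performing the index substitution $q\mapsto j$ consistently so that the multiplicities $j$ and the binomial arguments line up precisely with the statement.
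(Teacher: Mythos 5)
Your proof is correct, and it reaches the four formulas by a mildly different route than the paper. The computational core is the same: push $\cO_F(a\xi+bf)$ down to $\p1$ and use the splitting $\pi_*\cO_F(a\xi)\cong\mathrm{Sym}^a\cP\cong\bigoplus_{j=1}^{a+1}\cO_{\p1}(a+1-j)^{\oplus j}$, so that everything reduces to $h^0$ and $h^1$ of line bundles on $\p1$. The difference is in how the range $a\le-2$ (i.e. the $h^2,h^3$ formulas) is handled. The paper never touches $R^2\pi_*$: it applies Serre duality \eqref{Serre} on the threefold itself, $h^i\big(F,\cO_F(a\xi+bf)\big)=h^{3-i}\big(F,\cO_F(-(a+3)\xi-(b+1)f)\big)$, which flips any $a\le-1$ back into the range where only $\pi_*$ survives; this requires no spectral sequence and no relative duality, only the vanishing of the higher direct images for $a\ge-1$. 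You instead compute $R^2\pi_*$ directly via relative Serre duality, using $\omega_{F\vert\p1}\cong\cO_F(-3\xi+f)$ (your determinant computation from Sequence \eqref{seqOmega} is right, and the dual splitting $\bigoplus_{j=1}^{-a-2}\cO_{\p1}(a+1+j)^{\oplus j}$ checks out), and then note that Leray degenerates since $R^1\pi_*=0$ and the base is a curve. Both arguments are sound and rest ultimately on the same facts about projective bundles; the paper's version is shorter because absolute duality is already recorded as \eqref{Serre}, while yours is more uniform, treating all $a$ in a single spectral sequence whose two nonvanishing rows ($R^0\pi_*$ for $a\ge0$, $R^2\pi_*$ for $a\le-3$) exactly account for the empty-sum conventions in the statement.
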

\begin{proof}
On the one hand, if $a\ge-1$, then \cite[Exercises III.8.1, III.8.3 and III.8.4]{Ha2} implies that
$$
h^i\big(F,\cO_F(a\xi+b f)\big)=h^i\big(\p1,\cO_{\p1}(b )\otimes\pi_*\cO_F(a\xi)\big)=\sum_{j=1}^{a+1}h^i\big(\p1,\cO_{\p1}(a+b +1-j)^{\oplus j}\big).
$$
On the other hand, if $a\le-1$, then Equality \eqref{Serre} yields
$$
h^i\big(F,\cO_F(a\xi+b f)\big)=h^{3-i}\big(F,\cO_F(-(a+3)\xi-(b +1)f)\big).
$$
The statement then follows by combining the above equalities.
\end{proof}

A trivial consequence of the above proposition is that $\cO_F(a\xi+b f)$ is an effective line bundle if and only if $a,a+b\ge0$. 

Recall that $\Mov(F)\subseteq A^2(F)$ is the dual of the pseudo--effective cone of $F$, i.e. it is the closure inside $A^2(F)$ of the set of cycles $\zeta\in A^2(F)$ such that $\zeta D\ge0$ for each effective divisor $D\subseteq F$: for further details on $\Mov(X)$ see \cite[Section 11,4.C]{Laz2}).

\begin{corollary}
\label{cPositive}
The cycle $\alpha\xi^2+\beta \xi f\in A^2(F)$ is in $\Mov(F)$ if and only if $\alpha,\beta\ge0$. 
\end{corollary}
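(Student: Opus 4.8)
The plan is to read off the characterization directly from the definition of $\Mov(F)$ as the dual of the pseudo-effective cone of divisors, reducing the (a priori infinite) family of defining inequalities to a check against two extremal generators. Since $\Mov(F)$ is by definition the closure of $\{\zeta\in A^2(F) : \zeta D\ge 0 \text{ for every effective divisor } D\}$, and this set is an intersection of closed half-spaces hence already closed, membership $\zeta\in\Mov(F)$ is simply equivalent to the inequalities $\zeta\cdot D\ge 0$ for all effective $D$.

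First I would describe the effective cone of divisors explicitly. By the effectivity criterion established right after Proposition \ref{pLineBundle}, the class $a\xi+bf$ is effective exactly when $a\ge 0$ and $a+b\ge 0$. Over $\mathbb{R}$ these classes fill out the closed two-dimensional cone whose extremal rays are $\mathbb{R}_{\ge 0}\,f$ (the locus $a=0$) and $\mathbb{R}_{\ge 0}\,(\xi-f)$ (the locus $a+b=0$); being closed and finitely generated, this cone coincides with the pseudo-effective cone. By the standard duality of finitely generated convex cones, a class $\zeta$ satisfies $\zeta\cdot D\ge 0$ for every effective $D$ if and only if it does so for the two generators $D=f$ and $D=\xi-f$. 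Thus $\zeta\in\Mov(F)$ reduces to these two inequalities.

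Next I would carry out the two intersection computations using the ring relations $f^2=0$, $\xi^3=\xi^2 f$ and $\xi^2 f=1$ recalled at the start of this section. For $\zeta=\alpha\xi^2+\beta\xi f$ one gets $\zeta\cdot f=\alpha\,\xi^2 f+\beta\,\xi f^2=\alpha$ and $\zeta\cdot\xi=\alpha\,\xi^3+\beta\,\xi^2 f=\alpha+\beta$, whence $\zeta\cdot(\xi-f)=\beta$. The two dual inequalities therefore read precisely $\alpha\ge 0$ and $\beta\ge 0$, which is exactly the claimed equivalence.

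The intersection-number computations are entirely routine, so the only point genuinely requiring care is the reduction in the second step: one must verify that the effective cone of divisors is exactly the cone spanned by $f$ and $\xi-f$, so that testing positivity against these two generators is equivalent to testing against every effective divisor, and that this cone is already closed, so that no additional limit classes enlarge the pseudo-effective cone. Both facts follow immediately from the criterion $a,a+b\ge 0$, so I expect no real obstacle here.
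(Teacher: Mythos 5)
Your proposal is correct and follows essentially the same route as the paper: both use the effectivity criterion $a,a+b\ge0$ established after Proposition \ref{pLineBundle} and then test the pairing of $\alpha\xi^2+\beta\xi f$ against effective divisor classes, using the relations $\xi^3=\xi^2f=1$, $f^2=0$. The paper computes the general pairing $(\alpha\xi^2+\beta\xi f)(a\xi+bf)=\alpha(a+b)+\beta a$ and concludes directly, while you first reduce to the two extremal generators $f$ and $\xi-f$; this is only a cosmetic difference in presentation.
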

\begin{proof}
The pseudo--effective cone is generated by the effective divisor $a\xi+ bf$, i.e. such that  $a,a+b\ge0$. Thus the equality
$$
(\alpha\xi^2+\beta\xi f)(a\xi+bf)=\alpha(a+b)+\beta a,
$$
implies $\alpha\xi^2+\beta \xi f\in \Mov(F)$ if and  only if if and $\alpha,\beta\ge0$, which is trivial.
\end{proof}

Consider the following ordered sets of vector bundles on $F$
\begin{align*}
(\cF_{-5},\cF_{-4}&,\cF_{-3},\cF_{-2},\cF_{-1},\cF_0):=\\
:=&(\cO_F(-\xi),\cO_F(-\xi+f),\cO_F(-f),\cO_F,\cO_F(\xi-2f),\cO_F(\xi-f)),
\end{align*}
\begin{align*}
(\cG_{0},\cG_{1}&,\cG_{2},\cG_{3},\cG_{4},\cG_5):=\\
:=&(\cO_F(-\xi+f),\cO_F(-\xi),\Omega_{F\vert\p1}^1,\Omega_{F\vert\p1}^1(-f), \cO_F(-2\xi),\cO_F(-2\xi-f)).
\end{align*}
(these are the Orlov collection with respect to $\cO_F(\xi-f)$ and its dual tensored by $\cO_F(\xi-f)$ and $\cO_F(-\xi+f)$ respectively: see \cite[Corollary 2.6]{Orl}). 

\begin{lemma}
\label{lAO}
Let $\cE$ be an instanton bundle on $F$.

Then $\cE$  is the cohomology in degree $0$ of a complex $\widehat{\cC}^\bullet$ with $i^{th}$--module
$$
\widehat{\cC}^i:= \bigoplus_{q+p=i}H^{q+\lceil \frac p2\rceil}\big(F,\cE\otimes\cF_{p}\big)\otimes\cG_{-p}.
$$
\end{lemma}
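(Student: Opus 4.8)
The plan is to realize $\cE$ through the Beilinson-type spectral sequence attached to the Orlov exceptional collection $(\cF_{-5},\dots,\cF_0)$ and its dual $(\cG_0,\dots,\cG_5)$: namely, to produce a resolution of the structure sheaf $\cO_\Delta$ of the diagonal $\Delta\subseteq F\times F$ whose terms are the external products $\cF_p\boxtimes\cG_{-p}$, and then transport it through the Fourier--Mukai functor with kernel $\cO_\Delta$, which is the identity. Since everything takes place over the field $\bC$, each complex $R\Gamma(F,\cE\otimes\cF_p)$ is formal, so passing to cohomology turns the resulting object into a genuine complex of sheaves with vector-space multiplicities $H^\bullet(F,\cE\otimes\cF_p)$; the assertion will then be precisely that $\cE$ is quasi-isomorphic to this complex.

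First I would build the resolution of $\cO_\Delta$ in two stages, exploiting the factorization $F=\bP(\cP)\xrightarrow{\pi}\p1$ as a $\p2$-bundle. On the fibre side, $\pi$ carries a relative Beilinson (Koszul) resolution of the relative diagonal $\Delta_\pi\subseteq F\times_{\p1}F$, a length-two complex whose terms involve $\cO_F(-i\xi)$ and the relative differentials $\Omega^i_{F\vert\p1}$; the relative Euler sequence \eqref{seqOmega} identifies these terms, and this is where the sheaves $\Omega^1_{F\vert\p1}$ and $\Omega^1_{F\vert\p1}(-f)$ entering $\cG_2,\cG_3$ originate. On the base, $\p1$ carries the classical Beilinson resolution, a length-one complex built from $\cO_F$ and its $f$-twists. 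Convolving the pullbacks of these two resolutions produces a resolution of the full diagonal $\Delta$ with $3\times2=6$ terms; organizing the two homological gradings $(p_1,p_2)$ into the single index $p=p_1+p_2$ identifies the terms with $\cF_p\boxtimes\cG_{-p}$ sitting in homological degree $\lfloor p/2\rfloor$, which is exactly the content of \cite[Corollary 2.6]{Orl} for this collection.

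I would then apply $Rp_{2*}\big(p_1^*\cE\otimes^{L}(-)\big)$ to this resolution. By the projection formula each summand $p_1^*(\cE\otimes\cF_p)\otimes p_2^*\cG_{-p}$, placed in degree $\lfloor p/2\rfloor$, pushes down to $R\Gamma(F,\cE\otimes\cF_p)\otimes\cG_{-p}$ in the same degree, while the left-hand side reproduces $\cE$ because the kernel is $\cO_\Delta$. Totalizing and invoking the formality $R\Gamma(F,\cE\otimes\cF_p)\simeq\bigoplus_q H^q(F,\cE\otimes\cF_p)[-q]$ rewrites the $i$-th term of the total complex as $\bigoplus H^{i-\lfloor p/2\rfloor}(F,\cE\otimes\cF_p)\otimes\cG_{-p}$; reindexing so that the total degree becomes $q+p$ and the used cohomological degree becomes $q+\lceil p/2\rceil$ gives exactly $\widehat\cC^i$, and the quasi-isomorphism $\cE\simeq\widehat\cC^\bullet$ says that $\cE$ is its cohomology in degree $0$. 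This argument uses nothing about $\cE$ beyond being a coherent sheaf; the instanton hypothesis is only relevant afterwards, to force vanishing of the many groups $H^\bullet(F,\cE\otimes\cF_p)$ and thereby collapse $\widehat\cC^\bullet$ to the explicit monad of Theorem \ref{tSimplify}.

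The main obstacle I anticipate is the careful construction and identification of the diagonal resolution in the second step: verifying that the convolution of the base and fibre resolutions has terms exactly $\cF_p\boxtimes\cG_{-p}$ in homological degree $\lfloor p/2\rfloor$, correctly matching the three $\xi$-blocks with the relative index and the $f$-twists with the base index, and in particular reconstructing the differentials so that the total complex is genuinely a resolution of $\cO_\Delta$. Keeping the shift conventions straight so that the ceiling $\lceil p/2\rceil$ appears with its correct value in the final formula is the delicate point; once the resolution is in place, everything else is a formal consequence of the projection formula and the formality of complexes of $\bC$-vector spaces.
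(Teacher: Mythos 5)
You are correct, and in substance your route is the one the paper takes: both realize $\cE$ through the Beilinson--Orlov machinery for the $\p2$-bundle $\pi\colon F=\bP(\cP)\to\p1$. The difference is packaging: the paper never constructs the diagonal resolution, but instead invokes \cite[Theorem 8]{A--O} --- whose proof is precisely the convolution of the relative Koszul resolution with the pulled-back Beilinson resolution of $\Delta_{\p1}$ that you outline --- applying it to the normalized twist $\cE(\xi-f)^\bullet$ (so that the tautological line bundle of $\pi$ is $\cO_F(f-\xi)$ and the relative quotient bundle satisfies $\cQ^\vee\cong\Omega_{F\vert\p1}(\xi-f)$), and then untwisting and reindexing to land on the $\cF_p$, $\cG_{-p}$; twisting a diagonal resolution by $p_1^*\cO_F(\xi-f)\otimes p_2^*\cO_F(-\xi+f)$, which leaves $\cO_\Delta$ unchanged, is exactly the dictionary between your direct version and the paper's twisted one. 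What your version buys is self-containedness; what it costs is that the two steps you flag as delicate are not side issues but the entire content of the cited theorem: (i) checking that the convolution has terms exactly $\cF_p\boxtimes\cG_{-p}$ in cohomological degree $\lfloor p/2\rfloor$ (for a $\p2$-bundle the relative Koszul resolution also involves $\Omega^2_{F\vert\p1}$, which is a line bundle and accounts for $\cG_4,\cG_5$), and (ii) upgrading the derived-category identity $\cE\simeq Rp_{2*}\big(p_1^*\cE\otimes^{L}K^\bullet\big)$ to an honest complex of sheaves with split terms $H^s\big(F,\cE\otimes\cF_p\big)\otimes\cG_{-p}$ --- your ``formality'' step, which requires a double-complex or homotopy-transfer argument and not merely the observation that complexes of $\bC$-vector spaces split, since the totalization mixes the columns. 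Your degree bookkeeping is right ($H^s\otimes\cG_{-p}$ sits in total degree $s+\lfloor p/2\rfloor=q+p$ with $s=q+\lceil p/2\rceil$, matching the degrees $0,-1,-1,-2,-2,-3$ of the six terms of the resolution), and so is your closing remark: only coherence of $\cE$ is used here, the instanton hypothesis entering afterwards, through Proposition \ref{pTable}, to collapse $\widehat{\cC}^\bullet$ to the monad of Theorem \ref{tSimplify}.
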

\begin{proof}
Recall that $F\cong\bP(\cP)$, hence we can apply \cite[Theorem 8]{A--O}: notice that, with the notation in that paper, $\bP(\mathcal H):=\bP(\mathrm{Sym}(\mathcal H^\vee))$. 

In our case, we have $\mathcal H=\cP(-1)\cong\cO_{\p1}\oplus\cO_{\p1}(-1)^{\oplus2}$: in order to apply \cite[Theorem 8]{A--O} we must consider $\mathcal H(1)$, hence the relative universal line bundle therein (i.e. the tautological line bundle of $\pi$) is $\cO_F(f-\xi)$. The relative universal quotient bundle $\cQ$ can be computed by dualizing Sequence \eqref{seqOmega}, hence $\cQ^\vee\cong\Omega_{F\vert\p1}(\xi-f)$.

Recall that there is a natural functor $\cA\mapsto\cA^\bullet$ from the category of coherent sheaves on $F$ to the category of complexes of coherent sheaves on $F$, where
$$
\cA^i=\left\lbrace\begin{array}{ll} 
0\quad&\text{if $i\ne 0$,}\\
\cA\quad&\text{if $i=0$.}
\end{array}\right.
$$

In particular, \cite[Theorem 8]{A--O} applied to $\cE(\xi-f)^\bullet$ yields that it is the cohomology of a complex with 
\begin{align*}
\bigoplus_{s+p=i} \bigoplus_{a+b=p}H^{s}\big(F,\cE(&(a+1)\xi+(b-a-1)f)\big)\otimes\\
&\otimes\wedge^{-a}(\Omega_{F\vert\p1}(\xi-f))\otimes\pi^*\wedge^{-b}(\Omega_{\p1}(1))
\end{align*}
in degree $i$. It turns out that such a complex is everywhere exact, but in degree $0$ where its cohomology is exactly $\cE(\xi-f)$. Thus the definitions of $\cF_p$, $\cG_p$ and simple computations lead to the statement.
\end{proof}

We deduce from the above statement that in order to prove Theorem \ref{tSimplify} we have to compute the cohomologies $e^{p,q}:=h^{q+\lceil\frac p2\rceil}\big(F,\cE\otimes\cF_{p}\big)$ for $0\le q\le 5$ and $-5\le p\le 0$.

\begin{proposition}
\label{pTable}
Let $\cE$ be an instanton bundle on $F$.

If $c_2(\cE)=\alpha\xi^2+\beta \xi f$ and 
$$
\gamma:=h^1\big(F,\cE(-\xi+f)\big),\qquad \delta:= h^1\big(F,\cE(-\xi+2f)\big)
$$
then $e^{p,q}$ is the number in position $(p,q)$ in the following table.
\begin{table}[H]
\centering
\bgroup
\def\arraystretch{1.5}
\begin{tabular}{ccccccc}
\cline{1-6}
\multicolumn{1}{|c|}{0} & \multicolumn{1}{c|}{0} & \multicolumn{1}{c|}{0} & \multicolumn{1}{c|}{0} & \multicolumn{1}{c|}{0} & \multicolumn{1}{c|}{0} & $q=5$ \\ \cline{1-6}
\multicolumn{1}{|c|}{$\alpha+\beta-4$} & \multicolumn{1}{c|}{$\beta+\gamma-1$} & \multicolumn{1}{c|}{0} & \multicolumn{1}{c|}{0} & \multicolumn{1}{c|}{0} & \multicolumn{1}{c|}{0} & $q=4$ \\ \cline{1-6}
\multicolumn{1}{|c|}{0} & \multicolumn{1}{c|}{$\gamma$} & \multicolumn{1}{c|}{$\alpha-2$} & \multicolumn{1}{c|}{0} & \multicolumn{1}{c|}{0} & \multicolumn{1}{c|}{0} & $q=3$ \\ \cline{1-6}
\multicolumn{1}{|c|}{0} & \multicolumn{1}{c|}{0} & \multicolumn{1}{c|}{0} & \multicolumn{1}{c|}{0} & \multicolumn{1}{c|}{$\delta$} & \multicolumn{1}{c|}{$\gamma$} & $q=2$ \\ \cline{1-6}
\multicolumn{1}{|c|}{0} & \multicolumn{1}{c|}{0} & \multicolumn{1}{c|}{0} & \multicolumn{1}{c|}{0} & \multicolumn{1}{c|}{$\delta+\beta-\alpha+2$} & \multicolumn{1}{c|}{$\beta+\gamma-1$}& $q=1$ \\ \cline{1-6}
\multicolumn{1}{|c|}{0} & \multicolumn{1}{c|}{0} & \multicolumn{1}{c|}{0} & \multicolumn{1}{c|}{0} & \multicolumn{1}{c|}{0} & \multicolumn{1}{c|}{0} & $q=0$ \\ \cline{1-6}
$p=-5$ & $p=-4$ & $p=-3$ & $p=-2$ & $p=-1$ & $p=0$
\end{tabular}
\egroup
\caption{The values of $e^{p,q}$}
\end{table}
\end{proposition}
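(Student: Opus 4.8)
The plan is to compute the twenty-four numbers $h^i\big(F,\cE\otimes\cF_p\big)$ entering the table, keeping track of the shift $\lceil p/2\rceil$, so that the columns $p=-5,-4$ record $h^{q-2}$, the columns $p=-3,-2$ record $h^{q-1}$, and the columns $p=-1,0$ record $h^{q}$. Every entry in the rows $q=0$ and $q=5$ is an $h^0$ or $h^3$ of a twist $\cE(a\xi+bf)$, or else lies in negative or too high degree; I would dispose of these first. Each such twist satisfies $15a+9b<27$, so $h^0\big(F,\cE(a\xi+bf)\big)=0$ by Lemma \ref{lHoppe}, while $h^3$ vanishes by Serre duality \eqref{Serre1}, which reduces it to an $h^0$ of the same kind. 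The central column $p=-2$ is immediate from Lemma \ref{lNatural}, which gives $h^i\big(F,\cE\big)=0$ for every $i$.

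There then remain the numbers $h^1,h^2$ of the twists $\cE(-\xi),\cE(-\xi+f),\cE(-f),\cE(\xi-2f),\cE(\xi-f)$. Serre duality \eqref{Serre1} pairs $\cE(\xi-f)$ with $\cE(-\xi+f)$, sends $h^2\big(F,\cE(\xi-2f)\big)$ to $h^1\big(F,\cE(-\xi+2f)\big)=\delta$, and sends $h^2\big(F,\cE(-\xi+f)\big)$ to $h^1\big(F,\cE(\xi-f)\big)$; together with the definitions of $\gamma$ and $\delta$ this leaves a single unknown among $\{h^1,h^2\}$ in each case. Since $h^0=h^3=0$, Riemann--Roch \eqref{RRFano} reads $h^2-h^1=\chi$, and evaluation gives $\chi(\cE(-\xi))=\alpha+\beta-4$, $\chi(\cE(-\xi+f))=\beta-1$, $\chi(\cE(-f))=\alpha-2$, $\chi(\cE(\xi-2f))=\alpha-\beta-2$ and $\chi(\cE(\xi-f))=1-\beta$. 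Thus the whole table is determined once the two vanishings $h^1\big(F,\cE(-f)\big)=0$ and $h^1\big(F,\cE(-\xi)\big)=0$ are established, and these are the heart of the matter.

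For the first I would analyse $\pi_*\cE$, which is a bundle on $\p1$ since $\cE$ is locally free and $\pi$ smooth. The Leray spectral sequence together with $h^0(\cE)=h^1(\cE)=0$ forces $H^0(\p1,\pi_*\cE)=H^1(\p1,\pi_*\cE)=0$, whence $\pi_*\cE\cong\cO_{\p1}(-1)^{\oplus r}$. If $r>0$, adjunction gives $\Hom_F\big(\cO_F(-f),\cE\big)\cong\Hom_{\p1}\big(\cO_{\p1}(-1),\pi_*\cE\big)\ne0$, i.e. $h^0\big(F,\cE(f)\big)\ne0$, contradicting Lemma \ref{lHoppe} (here $15\cdot0+9\cdot1=9<27$); hence $\pi_*\cE=0$ and $\pi_*(\cE(-f))=\pi_*\cE\otimes\cO_{\p1}(-1)=0$. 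The spectral sequence then gives $H^1\big(F,\cE(-f)\big)\cong H^0\big(\p1,R^1\pi_*\cE\otimes\cO_{\p1}(-1)\big)$, and since $H^0(\p1,R^1\pi_*\cE)=0$ (again from $h^1(\cE)=0$) the sheaf $R^1\pi_*\cE$ has no torsion, hence is a bundle all of whose summands are strictly negative; so the twisted sections vanish and $h^1\big(F,\cE(-f)\big)=0$.

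For the second I would restrict along the exceptional divisor $E\in\vert\xi-f\vert$. Tensoring $0\longrightarrow\cO_F(-\xi+f)\longrightarrow\cO_F\longrightarrow\cO_E\longrightarrow0$ by $\cE(-f)$ and using $h^0(\cE(-f))=h^1(\cE(-f))=0$ yields $H^1\big(F,\cE(-\xi)\big)\cong H^0\big(E,\cE(-f)\otimes\cO_E\big)$. A nonzero $t$ in the latter group, multiplied by a nonzero $s\in H^0\big(E,\cO_E\otimes\cO_F(\xi)\big)$ (which exists because $\cO_F(\xi)$ is globally generated), gives a nonzero element of $H^0\big(E,\cE(\xi-f)\otimes\cO_E\big)$; but $(\xi-f)-E=0$, so tensoring the same sequence by $\cE(\xi-f)$ and using $h^0(\cE)=h^1(\cE)=0$ identifies this group with $H^0\big(F,\cE(\xi-f)\big)$, which is $0$ by Lemma \ref{lHoppe} (here $15(-1)+9\cdot1=-6<27$), a contradiction. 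Hence $h^1\big(F,\cE(-\xi)\big)=0$, and feeding the two vanishings into $h^2-h^1=\chi$ gives $h^2(\cE(-\xi))=\alpha+\beta-4$ and $h^2(\cE(-f))=\alpha-2$, completing the table. I expect the two vanishings to be the only genuine difficulty: $\mu$--semistability with respect to $h=3\xi+f$ does not restrict naively either to the fibres of $\pi$ or to the rigid divisor $E$, so no restriction theorem is available, and it is precisely the relative--cohomology computation for $\cE(-f)$ and the reduction of $\cE(-\xi)$ to the already--established vanishing $h^0(\cE(\xi-f))=0$ that circumvent this.
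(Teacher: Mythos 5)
Your proof is correct, and it shares the paper's overall skeleton: Lemma \ref{lHoppe} plus Serre duality \eqref{Serre1} dispose of all the $h^0$ and $h^3$ entries, Lemma \ref{lNatural} handles the column $p=-2$, and Riemann--Roch \eqref{RRFano} finishes the table once the two genuinely nontrivial vanishings $h^1\big(F,\cE(-f)\big)=0$ and $h^1\big(F,\cE(-\xi)\big)=0$ are known. Where you diverge from the paper is precisely in those two vanishings. For the first, the paper tensors the pulled-back Euler sequence \eqref{seqEuler} by $\cE$ and reads off $h^1\big(F,\cE(-f)\big)\le h^0\big(F,\cE(f)\big)+2h^1\big(F,\cE\big)=0$ in one line, whereas you run the Leray spectral sequence for $\pi$, prove $\pi_*\cE=0$ by adjunction and Grothendieck splitting, and conclude via negativity of $R^1\pi_*\cE$; your route is longer but yields more, namely $\pi_*\cE=0$ and $h^1\big(F,\cE(-bf)\big)=0$ for every $b\ge1$. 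For the second, the paper uses Serre duality together with Sequence \eqref{seqOmega} and its dual, bounding $h^2\big(F,\cE(\xi)\big)$ by $h^3\big(F,\cE\otimes\Omega_{F\vert\p1}(\xi)\big)=0$, whereas you restrict to the exceptional divisor $E\in\vert\xi-f\vert$ and show that a nonzero section of $\cE(-f)\otimes\cO_E$, multiplied by a nonzero section of $\cO_F(\xi)\otimes\cO_E$, would contradict $h^0\big(F,\cE(\xi-f)\big)=0$; this is more elementary and avoids the relative cotangent bundle entirely, and, like the paper's argument, it correctly uses the first vanishing (through $h^0\big(F,\cE(-f)\big)=h^1\big(F,\cE(-f)\big)=0$) before the second. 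Two cosmetic points: the $h^0$/$h^3$ entries are not confined to the rows $q=0,5$ (e.g. $e^{-3,1}=h^0\big(F,\cE(-f)\big)$ and $e^{0,3}=h^3\big(F,\cE(\xi-f)\big)$), though your Hoppe/Serre argument covers them all and you tacitly use this when writing $h^2-h^1=\chi$; and in your final Hoppe check the twist is $\cE(\xi-f)$, so in your own sign convention the computation should read $15\cdot1+9\cdot(-1)=6<27$ rather than $15(-1)+9\cdot1=-6$ --- the conclusion is unaffected.
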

\begin{proof}
By definition $e^{p,q}=0$ for $p\le -2$ and $q=0$, $p\le -4$ and $q=1$, $p\ge-1$ and $q=4$, $p\ge-3$ and $q=5$.

The vanishings  $h^s\big(F,\cE\otimes\cF_{p}\big)=0$ for $s=0$ and each $p$ follow from Lemma \ref{lHoppe} because $\cE$ is $\mu$--semistable. The same argument and Equality \eqref{Serre} yield the vanishings also for $s=3$ and each $p$. Thus $e^{p,q}=0$ also for $p=0,-1$ and $q=0,3$, $p=-2,-3$ and $q=1,4$, $p=-4,-5$ and $q=2,5$.

Lemma \ref{lNatural} yields $e^{-2,q}=h^{q-1}\big(F,\cE\otimes\cF_{-2}\big)=0$ for $q=2,3$. Thanks to such a vanishing for $q=2$, the cohomology of Sequence \eqref{seqEuler} yields
$$
e^{-3,2}=h^1\big(F,\cE\otimes\cF_{-3}\big)\le h^0\big(F,\cE(f)\big)=0,
$$
thanks to Lemma \ref{lHoppe}.

The cohomology of Sequence \eqref{seqOmega} and its dual tensored by $\cE(\xi)$ and $\cE(-2\xi+f)$ respectively, the vanishings proved above and Equality \eqref{Serre1} yield
\begin{align*}
e^{-5,3}=h^1\big(F,\cE\otimes\cF_{-5}\big)&=h^1\big(F,\cE(-\xi)\big)=h^2\big(F,\cE(\xi)\big)\le h^3\big(F,\cE\otimes\Omega_{F\vert \p1}(\xi)\big)=0.
\end{align*}  

By definition $e^{-4,3}=h^1\big(F,\cE\otimes\cF_{-4}\big)=\gamma$  and $e^{-1,2}=h^2\big(F,\cE\otimes\cF_{-1}\big)=\delta$. Equality \eqref{Serre1} then also returns $e^{0,2}=h^2\big(F,\cE\otimes\cF_0\big)=\gamma$. All the remaining values of $e^{p,q}=h^{q+\lceil\frac p2\rceil}\big(F,\cE\otimes\cF_{p}\big)$ are computed by means of Equality \eqref{RRFano}.

The statement is then completely proved.
\end{proof}

Proposition \ref{pTable} has some interesting consequences for an instanton bundle on $F$.

\begin{corollary}
\label{cBound}
Let $\cE$ be an instanton bundle with $c_2(\cE)=\alpha\xi^2+\beta  \xi f$ on $F$.

Then $\alpha\ge2$ and 
\begin{gather*}
\beta\ge \max\left\{\ 4-\alpha,\ \alpha-2-\delta,\ 1-\gamma\ \right\},\\
\alpha+\beta-4+2\gamma\ge\delta\ge 2\gamma.
\end{gather*}
\end{corollary}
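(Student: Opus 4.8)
The plan is to read off all the stated inequalities from the table in Proposition \ref{pTable}, using that each entry $e^{p,q}$ is by construction the dimension of a cohomology group, hence non-negative, and to supplement this with a single long exact sequence for the two inequalities linking $\gamma$ and $\delta$.

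First I would record the non-negativity constraints coming directly from the table. The entry $e^{-3,3}=\alpha-2$ forces $\alpha\ge2$. The three entries $e^{-5,4}=\alpha+\beta-4$, $e^{-1,1}=\delta+\beta-\alpha+2$ and $e^{0,1}=\beta+\gamma-1$ being non-negative give respectively $\beta\ge4-\alpha$, $\beta\ge\alpha-2-\delta$ and $\beta\ge1-\gamma$, which together are precisely $\beta\ge\max\{4-\alpha,\alpha-2-\delta,1-\gamma\}$. The entries $e^{-4,3}=\gamma$ and $e^{-1,2}=\delta$ also record $\gamma,\delta\ge0$, but these will be subsumed by the sharper bounds below.

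For the two remaining inequalities $2\gamma\le\delta\le\alpha+\beta-4+2\gamma$ I would tensor the relative Euler sequence \eqref{seqEuler} by $\cE(-\xi+f)$ to obtain
$$0\longrightarrow\cE(-\xi)\longrightarrow\cE(-\xi+f)^{\oplus2}\longrightarrow\cE(-\xi+2f)\longrightarrow0,$$
and then pass to the associated long exact sequence in cohomology. Reading the relevant terms off the table (via $e^{p,q}=h^{q+\lceil p/2\rceil}\big(F,\cE\otimes\cF_p\big)$ and Serre duality \eqref{Serre1}) gives $h^1\big(F,\cE(-\xi)\big)=e^{-5,3}=0$, $h^1\big(F,\cE(-\xi+f)\big)=\gamma$, $h^1\big(F,\cE(-\xi+2f)\big)=\delta$ and $h^2\big(F,\cE(-\xi)\big)=e^{-5,4}=\alpha+\beta-4$. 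The vanishing of the first group makes
$$0\longrightarrow H^1\big(F,\cE(-\xi+f)\big)^{\oplus2}\longrightarrow H^1\big(F,\cE(-\xi+2f)\big)\longrightarrow H^2\big(F,\cE(-\xi)\big)$$
exact. Injectivity of the first arrow yields $2\gamma\le\delta$, while exactness at the middle term shows that the image in $H^2\big(F,\cE(-\xi)\big)$ has dimension $\delta-2\gamma$, whence $\delta-2\gamma\le\alpha+\beta-4$.

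I expect no serious obstacle, since essentially all the content is already encoded in the table: the only genuine choice is to twist the Euler sequence \eqref{seqEuler} by exactly $\cE(-\xi+f)$, which is what places the three previously computed cohomology dimensions $\gamma$, $\delta$ and $\alpha+\beta-4$ in consecutive positions with the vanishing group $H^1\big(F,\cE(-\xi)\big)$ separating them. The one point requiring care is the bookkeeping of the index shift $q+\lceil p/2\rceil$ together with Serre duality \eqref{Serre1} when matching table entries such as $e^{-1,2}$ to $\delta=h^1\big(F,\cE(-\xi+2f)\big)$.
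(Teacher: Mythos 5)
Your proposal is correct and follows essentially the same route as the paper: the paper's proof likewise reads $\alpha\ge2$ and the three lower bounds on $\beta$ directly from the non-negativity of the entries $e^{p,q}$ in Proposition \ref{pTable}, and obtains the last line $\alpha+\beta-4+2\gamma\ge\delta\ge2\gamma$ from the cohomology of Sequence \eqref{seqEuler} tensored by $\cE(-\xi+f)$, exactly as you do (your long exact sequence argument, using $h^1\big(F,\cE(-\xi)\big)=0$ and $h^2\big(F,\cE(-\xi)\big)=\alpha+\beta-4$, just makes explicit what the paper leaves implicit). The only cosmetic slip is calling \eqref{seqEuler} the relative Euler sequence — it is the pull-back via $\pi$ of the Euler sequence on $\p1$, the relative one being \eqref{seqOmega} — but the sequence you actually use is the right one.
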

\begin{proof}
All the inequalities follow from the obvious non--negativity of the $e_{p,q}$'s, but the last line which is obtained by computing the cohomology of  Sequence \eqref{seqEuler} tensored by $\cE(-\xi+f)$.
\end{proof}

Secondly, $c_2(\cE)h\ge27/2$ (see Inequality \eqref{Minimal}). Thus, $14$ is the first integral value that $c_2(\cE)h$ could attain. Proposition \ref{pTable} allows us to give the following sharper lower bound on the degree of the charge.

\begin{corollary}
\label{cMinimal}
If $\cE$ is an instanton bundle on $F$ with $c_2(\cE)=\alpha\xi^2+\beta  \xi f$, then $c_2(\cE)h=4\alpha+3\beta\ge15$.
\end{corollary}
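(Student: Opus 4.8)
The plan is to reduce the bound to two inequalities and combine them by an identity. First, using $\xi^3=\xi^2f=1$ and $f^2=0$ one has $c_2(\cE)h=(\alpha\xi^2+\beta\xi f)(3\xi+f)=4\alpha+3\beta$, and the algebraic identity
$$
4\alpha+3\beta=(2\alpha+\beta)+2(\alpha+\beta)
$$
shows that it suffices to prove $2\alpha+\beta\ge7$, since Corollary \ref{cBound} already gives $\alpha+\beta\ge4$; indeed these together force $4\alpha+3\beta\ge7+2\cdot4=15$. So the whole content is the inequality $2\alpha+\beta\ge7$.

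To obtain it I would examine the twist $\cE(\xi+f)$. Riemann--Roch \eqref{RRFano} with $a=b=1$ gives $\chi\big(F,\cE(\xi+f)\big)=7-2\alpha-\beta$, so the target inequality is exactly $\chi\big(F,\cE(\xi+f)\big)\le0$. I would prove this by showing $h^0=h^2=h^3=0$ for $\cE(\xi+f)$, whence $\chi=-h^1\le0$. The vanishings $h^0\big(F,\cE(\xi+f)\big)=0$ and, via \eqref{Serre1}, $h^3\big(F,\cE(\xi+f)\big)=h^0\big(F,\cE(-\xi-f)\big)=0$ are immediate from $\mu$-semistability: by Lemma \ref{lHoppe}, $h^0\big(F,\cE(-a\xi-bf)\big)=0$ whenever $15a+9b>\mu(\cE)=-h^3/2=-27$, and the two relevant classes give $15(-1)+9(-1)=-24>-27$ and $15+9=24>-27$.

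The one genuinely cohomological point — which I expect to be the main obstacle — is $h^2\big(F,\cE(\xi+f)\big)=0$, the only vanishing not coming directly from a slope inequality. For this I would tensor the relative Euler sequence \eqref{seqEuler} by $\cE(\xi)$,
$$
0\longrightarrow\cE(\xi-f)\longrightarrow\cE(\xi)^{\oplus2}\longrightarrow\cE(\xi+f)\longrightarrow0,
$$
and extract $h^2\big(F,\cE(\xi+f)\big)$ from its long exact sequence, where it is squeezed between $h^2\big(F,\cE(\xi)\big)$ and $h^3\big(F,\cE(\xi-f)\big)$. The first vanishes since by \eqref{Serre1} it equals $h^1\big(F,\cE(-\xi)\big)$, which is the entry $e^{-5,3}=0$ of the table in Proposition \ref{pTable}; the second equals $h^0\big(F,\cE(-\xi+f)\big)=0$, again by Lemma \ref{lHoppe} (the class $\xi-f$ has $15-9=6>-27$). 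Hence $h^2\big(F,\cE(\xi+f)\big)=0$.

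Combining the three vanishings yields $\chi\big(F,\cE(\xi+f)\big)=-h^1\big(F,\cE(\xi+f)\big)\le0$, i.e. $2\alpha+\beta\ge7$, and the identity above then gives $4\alpha+3\beta\ge15$. I would note as a sanity check that the coarser Bogomolov bound \eqref{Minimal} only yields $c_2(\cE)h\ge27/2$, hence $\ge14$, and that the single charge it fails to exclude, namely $(\alpha,\beta)=(2,2)$, is precisely the one for which $\chi\big(F,\cE(\xi+f)\big)=1>0$; this is impossible, because a positive Euler characteristic with $h^2=h^3=0$ would force a nonzero section of $\cE(\xi+f)$, i.e. a destabilizing sub-line-bundle $\cO_F(-\xi-f)\hookrightarrow\cE$ of slope $-24>-27$. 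So the argument via $\chi\big(F,\cE(\xi+f)\big)\le0$ both proves the bound and transparently rules out the missing value $14$.
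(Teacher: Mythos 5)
Your proof is correct, and it reaches the bound by a genuinely different route than the paper. The paper's proof starts from the Bogomolov-type bound \eqref{Minimal} (which rests on Maruyama's restriction theorem), giving $4\alpha+3\beta\ge14$, and then excludes the value $14$ by a case analysis: Corollary \ref{cBound} forces the unique boundary charge $(\alpha,\beta)=(2,2)$, and for that charge the cohomology of Sequence \eqref{seqEuler} tensored by $\cE(\xi)$, together with the value $h^1\big(F,\cE(\xi-f)\big)=\gamma+1$ from the proof of Proposition \ref{pTable}, yields $h^0\big(F,\cE(\xi+f)\big)=\gamma+1\ge1$, contradicting $\mu$-semistability via Lemma \ref{lHoppe}. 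You bypass \eqref{Minimal} and the case analysis entirely: you prove the uniform inequality $\chi\big(F,\cE(\xi+f)\big)=7-2\alpha-\beta\le0$ by establishing the three vanishings $h^0=h^2=h^3=0$ (semistability through Lemma \ref{lHoppe}, duality \eqref{Serre1}, the Euler sequence \eqref{seqEuler} twisted by $\cE(\xi)$, and the table entry $e^{-5,3}=0$), and then conclude by the linear-algebra identity $4\alpha+3\beta=(2\alpha+\beta)+2(\alpha+\beta)\ge 7+8$. The two arguments share the same engine — $\mu$-semistability kills $H^0\big(F,\cE(\xi+f)\big)$ while the Euler sequence and Proposition \ref{pTable} control the higher cohomology — but yours trades the external input (Bogomolov on a hyperplane section) for one Riemann--Roch evaluation \eqref{RRFano}, is uniform in the charge, and isolates the intermediate inequality $2\alpha+\beta\ge7$; the paper's version, having Bogomolov already in hand, only needs to perform the cohomological computation at the single charge $(2,2)$. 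All your individual steps (the slope computations $15a+9b$ for the classes $\pm(\xi+f)$ and $\xi-f$, the identification $h^2\big(F,\cE(\xi)\big)=h^1\big(F,\cE(-\xi)\big)=e^{-5,3}$, and the evaluation $\chi\big(F,\cE(\xi+f)\big)=7-2\alpha-\beta$) check out.
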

\begin{proof}
Notice that $\alpha\ge2$ and $\alpha+\beta\ge4$ (see Corollary \eqref{cBound}) and $c_2(\cE)h\ge14$. If equality occurs and  $\alpha\ge3$, then
$$
4\le\alpha+\beta=\frac{14-\alpha}3<4,
$$
a contradiction. Thus, we deduce $\alpha=2$. The same argument used above yields $\alpha=\beta=2$ necessarily, hence $\delta=2\gamma$, thanks to Corollary \eqref{cBound}. 

The cohomology of Sequence \eqref{seqEuler} tensored by $\cE(\xi)$, Equality \eqref{Serre1} and the equality $h^1\big(F,\cE(\xi-f)\big)=\gamma+1$ (see the computation of $e^{0,1}$ in the proof of Proposition \ref{pTable}) imply $h^0\big(F,\cE(\xi+f)\big)=\gamma+1$.
Thus $\cE$ is not $\mu$--semistable thanks to Lemma \ref{lHoppe}, hence it is not an instanton bundle.
\end{proof}

The following remark will be helpful for proving Theorem \ref{tSimplify} stated in the introduction.

\begin{remark}
\label{rSmooth}
We show that $\vert a\xi+bf\vert$ contains a smooth integral divisor $D$ if and only if either $a,b\ge0$, or $a=-b=1$.

To this purpose we first notice that $\cO_F(a\xi+b f)$ is globally generated if and only if $a,b\ge0$. Indeed, on the one hand, if $a,b\ge0$ the assertion is a trivial consequence of the existence of a surjective morphism $\pi^*\cP\to\cO_F(\xi)$. On the other hand, if $\cO_F(a\xi+b f)$ is globally generated, then $a=(a\xi+b f)\xi f$ and $b=(a\xi+b f)(\xi^2-\xi f)$ must be non--negative.

If $a=-b=1$, then $D=E\cong\p1\times\p1$ which is trivially smooth and integral. If $a,b\ge0$, then $\cO_F(a\xi+b f)$ is globally generated, hence $\vert a\xi+bf\vert$ contains a smooth integral divisor thanks to the Bertini theorem. 

Conversely, assume that $\vert a\xi+bf\vert$ contains a smooth integral divisor. Thus if $\cO_F(a\xi+b f)$ is not globally generated, then $a\ge1$ and $-1\ge b\ge-a$, thanks to  Proposition \ref{pLineBundle}.
If $E\not\subseteq D$, then there is a line $L\subseteq E$ intersecting $D$ properly, hence $0\le DL=b\le-1$, a contradiction. Thus $E\subseteq D$ which is smooth and integral, hence $D=E$.
\end{remark}

Also thanks to Proposition \ref{pTable} we can prove Theorem \ref{tSimplify} stated in the introduction. 

\medbreak
\noindent{\it Proof of Theorem \ref{tSimplify}.}
By applying Lemma \ref{lAO} using the values $h^q\big(F,\cE\otimes\cF_{-p})$ calculated in Proposition \ref{pTable} we obtain a complex $\widehat{\cC}^\bullet$ where
\begin{gather*}
\widehat{\cC}^{-1}:=\cO_F(-2\xi-f)^{\oplus\alpha+\beta-4}\oplus\cO_F(-2\xi)^{\oplus\gamma},\\
\widehat{\cC}^0:=\cO_F(-2\xi)^{\oplus\beta+\gamma-1}\oplus\Omega_{F\vert\p1}(-f)^{\oplus\alpha-2}\oplus\cO_F(-\xi)^{\oplus\delta+\beta-\alpha+2},\\
\widehat{\cC}^1:=\cO_F(-\xi)^{\oplus\delta}\oplus\cO_F(-\xi+f)^{\oplus\beta+\gamma-1},\\
\widehat{\cC}^2:=\cO_F(-\xi+f)^{\oplus\gamma},
\end{gather*}
which is exact everywhere but at $\widehat{\cC}^0$ where its cohomology is $\cE$. Notice that $\cC^i\cong\widehat{\cC}^{i}$ for $i=-1,0$: thus the statement is proved if we check that $\cC^1$ is isomorphic to the kernel of the differential $\widehat{\cC}^1\to\widehat{\cC}^2$.

Let $\varphi$ and $\psi$ be the differentials $\widehat{\cC}^1\to\widehat{\cC}^2$ and $\widehat{\cC}^0\to\widehat{\cC}^1$ twisted by the identity of $\cO_F(\xi-f)$. 

We have $\cO_F(-f)\cong\pi^*\cO_{\p1}(-1)$ and $\cO_F\cong\pi^*\cO_{\p1}$, hence \cite[Exercise III.8.3]{Ha2} implies $R^i\pi_*\cO_F=R^i\pi_*\cO_F(-f)=0$. The functor $\pi_*$ then induces an isomorphism
$$
\theta\colon \Hom_F\big(\widehat{\cC}^1(\xi-f),\widehat{\cC}^2(\xi-f)\big)\longrightarrow
\Hom_{\p1}\big(\cO_{\p1}(-1)^{\oplus\delta}\oplus\cO_{\p1}^{\oplus\beta+\gamma-1},\cO_{\p1}^{\oplus\gamma}\big)
$$
thanks to the projection formula (see \cite[Exercise III.8.1]{Ha2}, where we are using that $\Hom_X\big(\cdot,\cdot\big)$ are the global sections of $\sHom_X\big(\cdot,\cdot\big)$). Let $\theta(\varphi)=\phi$: if $\phi$ is not surjective at $x\in \p1$, then $\varphi$ is not surjective at the points of $\pi^{-1}(x)$. It follows that $\phi$ is surjective, hence
$$
\ker(\varphi)\cong\bigoplus_{i=1}^{\delta+\beta-1}\cO_{F}(-\lambda_if),
$$ 
for suitable integers $\lambda_i$. Since
$$
\ker(\varphi)\subseteq\widehat{\cC}^1(\xi-f):=\cO_F(-f)^{\oplus\delta}\oplus\cO_F^{\oplus\beta+\gamma-1},
$$
it follows that $\lambda_i\ge0$.

By composing $\psi$ with the projections on the summands of $\ker(\varphi)=\im(\psi)$, we obtain epimorphisms $\psi_i\colon \widehat{\cC}^0\to\cO_F(-\lambda_if)$. We have
\begin{gather*}
\Hom_F\big(\cO_F(-f),\cO_F(-\lambda_if)\big)=H^0\big(F,\cO_F((1-\lambda_i)f)\big),\\
\Hom_F\big(\cO_F(-\xi-f),\cO_F(-\lambda_if)\big)=H^0\big(F,\cO_F(\xi+(1-\lambda_i)f)\big),
\end{gather*}
Thanks to Proposition \ref{pLineBundle} it is easy to check that the first space vanishes if $\lambda_i\ge2$ and that the same is true for the second one when $\lambda_i\ge3$. By applying $\Hom_F\big(\cdot,\cO_F(-\lambda_if)\big)$ to Sequence \eqref{seqOmega} one also deduces that 
$$
\Hom_F\big(\Omega_{F\vert\p1}(\xi-2f),\cO_F(-\lambda_if)\big)=0
$$
if $\lambda_i\ge3$. In particular $\psi_i$ cannot be surjective when $\lambda_i\ge3$, hence we deduce 
$$
\ker(\varphi)\cong\cO_F(-2f)^{\oplus\varepsilon}\oplus\cO_F(-f)^{\oplus\eta}\oplus\cO_F^{\oplus\beta+\delta-\eta-\varepsilon-1}.
$$
By computing the cohomology of the exact sequence
$$
0\longrightarrow\ker(\varphi)\longrightarrow\widehat{\cC}^1(\xi-f)\longrightarrow\widehat{\cC}^2(\xi-f)\longrightarrow0,
$$
we finally deduce that $\eta=\delta-2\varepsilon$, i.e.
$$
\ker(\varphi)\cong\cO_F(-2f)^{\oplus\varepsilon}\oplus\cO_F(-f)^{\oplus\delta-2\varepsilon}\oplus\cO_F^{\oplus\beta+\varepsilon-1}.
$$

Let $\cC^{-1}:=\widehat{\cC}^{-1}$, $\cC^0:=\widehat{\cC}^{0}$ and $\cC^{1}:=\ker(\varphi)\otimes\cO_F(-\xi+f)$. We have then a monad $\cC^\bullet$ whose cohomology is $\cE$. In order to complete the proof of the first part of the statement it suffices to check that $\varepsilon=\gamma$. To this purpose consider the two short exact sequences
\begin{equation}
\label{Display}
\begin{gathered}
0\longrightarrow \cK\longrightarrow \cC^0\longrightarrow\cC^{1}\longrightarrow0,\\
0\longrightarrow \cC^{-1}\longrightarrow \cK\longrightarrow\cE\longrightarrow0.
\end{gathered}
\end{equation}
Proposition \ref{pLineBundle}  and the cohomology of the dual of Sequence \eqref{seqOmega}  tensored by $\cO_F(-2\xi-f)$ yield $h^i\big(F,\Omega_{F\vert\p1}(\xi-2f)\big)=0$ for $i=1,2$. Thus the cohomology of the above Sequences \eqref{Display} tensored by $\cO_F(\xi-f)$ and Equality \eqref{Serre1} finally returns
$\varepsilon=h^2\big(F,\cE(\xi-f)\big)=\gamma$. 

Conversely, assume that the cohomology $\cE$ of Monad \eqref{Monad}  is a $\mu$--semistable vector bundle of rank $2$ (so that  $h^0\big(F,\cE\big)=0$ as pointed out in the introduction). 

Easy and tedious computations lead to the equalities
\begin{gather*}
c_1(\cE)=c_1(\cC^0)-c_1(\cC^1)-c_1(\cC^{-1})=-3\xi-f,\\
\begin{align*}
c_2(\cE)&=c_2(\cC^0)-c_2(\cC^1)-c_2(\cC^{-1})-c_1(\cC^0)c_1(\cC^{-1})-c_1(\cC^0)c_1(\cC^{1})+\\
&+c_1(\cC^{-1})^2+c_1(\cC^{-1})c_1(\cC^{1})+c_1(\cC^{1})^2=\alpha\xi^2+\beta f^2.
\end{align*}
\end{gather*}
Moreover, we can still consider Sequences \eqref{Display} which easily lead to the inequality 
\begin{equation}
\label{BoundDisplay}
h^i\big(F,\cE\otimes\mathcal L\big)\le \sum_{j=-1}^1h^{i-j}\big(F,\cC^{j}\otimes\mathcal L\big)
\end{equation}
for each $\mathcal L\in\Pic(F)$.

Let $D$ be either $0$, or any smooth element in $\vert a\xi+bf\vert$, $D\ne E$: thanks to Remark \ref{rSmooth} we then know that $a,b\ge0$. Thanks to Proposition \ref{pLineBundle} and the cohomology of Sequence \eqref{seqOmega} tensored by $\cO_F(-a\xi-bf)$, Inequality \eqref{BoundDisplay} with 
$$
\mathcal L:=\cO_F(-D)\cong\cO_F(-a\xi-bf)
$$
finally yields $h^1\big(F,\cE(-D)\big)=0$. If $D=0$, then we deduce that $\cE$ satisfies the instantonic condition, hence it is an instanton, because it is assumed $\mu$--semistable. If $D\ne0$, we obtain the assertion (3) of the statement.

Proposition \ref{pLineBundle}  and the cohomology of the dual of Sequence \eqref{seqOmega}  tensored by $\cO_F(-2\xi-df)$ yield $h^i\big(F,\Omega_{F\vert\p1}(\xi-f-df)\big)=0$ for $i=1,2$.
Thus, assertions (1) and (2) can be obtained by computing the cohomology of sequences \eqref{Display} tensored by $\cO_F(\xi-df)$ respectively, because $h^1\big(F,\cE(-\xi+df)\big)=h^2\big(F,\cE(\xi-df)\big)$ thanks to Equality \eqref{Serre1}, where $d=1,2$.
\qed
\medbreak

\begin{remark}
\label{rReferee1}
It is natural to ask if the required $\mu$--semistability of the cohomology $\cE$ of the monad $\cC^\bullet$ in the second part of the statement of Theorem \ref{tSimplify} is actually necessary for proving that $\cE$ is an instanton, or if it can be at least relaxed.

E.g., one could wonder if it can be replaced by the weaker vanishing $h^0\big(F,\cE\big)=0$, as in the statement of \cite[Theorem 4.2]{M--M--PL}. The $\mu$--semistability of $\cE$ has been used in the proof of Proposition \ref{pTable} (and hence in the construction of Monad \eqref{Monad}) in order to get the vanishings $h^0\big(F,\cE(\xi-f)\big)=h^0\big(F,\cE(\xi-2f)\big)=0$ which do not seem to follow from the vanishing of $h^0\big(F,\cE\big)$.

Indeed, let us consider a morphism $\varphi\colon \cO_F(-2\xi)\oplus\cO_F(-\xi)^{\oplus2}\to\cO_F(-\xi+f)$ with matrix
$$
A:=\left(\begin{array}{ccc}
0&a_1&a_2
\end{array}\right),
$$
where $a_1,a_2\in H^0\big(F,\cO_F(f)\big)$ have no common zeros. Thus $\varphi$ is surjective, hence it defines a monad $\Phi^\bullet$ coinciding with Monad \eqref{Monad} when $\alpha=\beta=2$ and $\gamma=\delta=0$.

Taking into account of the definition of $\varphi$ and of Sequence \eqref{seqEuler}, we deduce that the cohomology of $\Phi^\bullet$ is $\cE\cong\ker(\varphi)\cong\cO_F(-2\xi)\oplus\cO_F(-\xi-f)$. Thus $\cE$ is not $\mu$--semistable, because $\mu(\cO_F(-2\xi))=-30\ne-24=\mu(\cO_F(-\xi-f))$. In particular, $\cE$ is not an instanton bundle, though $h^0\big(F,\cE\big)=0$.
\end{remark}

\begin{remark}
\label{rEarnest}
If $\cE$ is earnest, then Monad \eqref{Monad} becomes
\begin{align*}
0&\longrightarrow \cO_F(-2\xi-f)^{\oplus\alpha+\beta-4}\longrightarrow\\
&\phantom{\longrightarrow }\longrightarrow  \cO_F(-2\xi)^{\oplus\beta-1}\oplus\Omega_{F\vert\p1}(-f)^{\oplus\alpha-2}\oplus\cO_F(-\xi)^{\oplus\delta+\beta-\alpha+2}\longrightarrow\\
&\phantom{\longrightarrow \longrightarrow }\longrightarrow \cO_F(-\xi)^{\oplus\delta}\oplus\cO_F(-\xi+f)^{\oplus\beta-1}\longrightarrow 0.
\end{align*}
\end{remark}

The following corollary is an immediate consequence of Theorem \ref{tSimplify} and Corollary \ref{cPositive}.

\begin{corollary}
\label{cSimplify}
Let $\cE$ be an instanton on $F$. Then $\cE$ is earnest if and only if
$$
h^1\big(F,\cE(-\xi+f)\big)=0.
$$
If this is true, then $c_2(\cE)\in{\Mov}(F)$,
\end{corollary}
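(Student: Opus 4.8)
The goal is Corollary \ref{cSimplify}: characterizing earnest instanton bundles by the single vanishing $h^1(F,\cE(-\xi+f))=0$, and then deducing $c_2(\cE)\in\Mov(F)$ under this hypothesis.

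The plan is as follows. First I would recall the definition of earnest: $\cE$ is earnest precisely when $h^1(F,\cE(-\xi-D))=0$ for every linear system $|D|\ne\emptyset$ containing smooth integral elements (here $q_X=0$ since $i_F=1$, so $q_Xh$ drops out and the twist is just by $-D$). By Remark \ref{rSmooth}, the smooth integral divisors $D$ on $F$ are exactly those with class $a\xi+bf$ where either $a,b\ge0$, or $a=-b=1$ (the exceptional divisor $E$). So the earnest condition is an infinite family of vanishings indexed by these divisor classes.

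The key observation is that Theorem \ref{tSimplify}(3) already disposes of almost all of these vanishings for free: it asserts $h^1(F,\cE(-D))=0$ for every integral smooth effective divisor $D\notin|\xi-f|$. The only smooth integral divisor in the class $\xi-f$ is $E$ itself (again by Remark \ref{rSmooth}, since $\xi-f$ has $a=-b=1$). Therefore the entire infinite list of vanishings defining earnestness collapses to the single remaining case $D=E$, i.e. $h^1(F,\cE(-\xi+f))=0$, which is exactly $\gamma=0$ in the notation of Proposition \ref{pTable}. Thus $\cE$ is earnest if and only if this one vanishing holds. I would phrase this as: the forward direction is the definition specialized to $D=E$, and the converse uses Theorem \ref{tSimplify}(3) to handle all $D\notin|\xi-f|$ together with the single hypothesis for $D=E$.

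For the final assertion, suppose $h^1(F,\cE(-\xi+f))=0$, i.e. $\gamma=0$. By Corollary \ref{cBound} we have $\alpha\ge2$ and $\beta\ge\max\{4-\alpha,\,\alpha-2-\delta,\,1-\gamma\}$; with $\gamma=0$ this last bound gives $\beta\ge1-\gamma=1>0$, so $\beta\ge0$ (in fact $\beta\ge1$). Combined with $\alpha\ge2\ge0$, both coefficients of the charge $c_2(\cE)=\alpha\xi^2+\beta\xi f$ are non-negative, whence $c_2(\cE)\in\Mov(F)$ by Corollary \ref{cPositive}. I expect the only subtle point to be the bookkeeping around $|\xi-f|$: one must check that $E$ is genuinely the unique smooth integral member of that class so that Theorem \ref{tSimplify}(3) leaves exactly this one vanishing uncovered — but this is precisely the content of Remark \ref{rSmooth}, so no real obstacle remains once that identification is invoked.
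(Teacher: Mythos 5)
Your proof is correct and is essentially the paper's own argument: the paper presents the corollary as an immediate consequence of Theorem \ref{tSimplify} and Corollary \ref{cPositive}, and the reasoning you spell out — Remark \ref{rSmooth} classifying the relevant linear systems, Theorem \ref{tSimplify}(3) disposing of every divisor class except $\xi-f$ (whose unique smooth integral member is $E$), and Corollary \ref{cBound} giving $\beta\ge 1-\gamma$ so that $\gamma=0$ forces $\alpha,\beta\ge 0$ — is exactly what that deduction amounts to. No gaps; your bookkeeping on $|\xi-f|$ matches the paper's.
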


\section{Existence of instanton bundles on the blow up of $\p3$}
\label{sInstanton}
In this section we will prove the existence of instanton bundles  satisfying some extra important properties for all the admissible charges. Again $\xi$ and $F$ denote $\xi_1$ and $F_1$ respectively.

\begin{construction}
\label{conInstanton}
Let $\alpha$ and $\beta$ be integers such that $\alpha\ge2$, $\alpha+\beta\ge4$ and $4\alpha+3\beta\ge15$. We take $L_1,\dots, L_{\alpha-2}$ and $N_1,\dots,N_{\alpha+\beta-4}$ pairwise disjoint curves corresponding to points in $\Lambda$ and $\Lambda_N$ respectively and define 
\begin{equation}
\label{ZInstanton}
Z:=\bigcup_{i=1}^{\alpha-2}L_i\cup\bigcup_{j=1}^{\alpha+\beta-4}N_j\subseteq F.
\end{equation}
If $\alpha=2$ and $\alpha+\beta=4$, then $4\alpha+3\beta=14$, hence the condition $4\alpha+3\beta\ge15$ implies $Z\ne\emptyset$.
As pointed out in Remarks \ref{rLine} and \ref{rPseudoLine}, both $L_i$ and $N_j$ are isomorphic to $\p1$. 

We claim that $\det(\cN_{Z\vert F})\cong\cO_F(\xi-f)\otimes\cO_Z$. Such an isomorphism can be checked component by component. The aformentioned remarks show that 
\begin{gather*}
\det(\cN_{Z\vert F})\otimes\cO_{L_i}\cong\cO_{\p1}(-1)\cong\cO_F(\xi-f)\otimes\cO_{L_i},\\
\det(\cN_{Z\vert F})\otimes\cO_{N_j}\cong\cO_{\p1}(1)\cong\cO_F(\xi-f)\otimes\cO_{N_j}.
\end{gather*}
Since we have $h^2\big(F,\cO_F(-\xi+f)\big)=0$ thanks to Proposition \ref{pLineBundle}, it follows from Theorem \ref{tSerre} the existence of a vector bundle $\cF$ on $F$ with a section $s$ vanishing exactly along $Z$ and with $c_1(\cF)=\xi-f$, $c_2(\cF)=Z$. 

Sequence \eqref{seqSerre} for such an $s$ tensored by $\cO_F(-2\xi)$ gives the exact sequence
\begin{equation}
\label{seqStandard}
0\longrightarrow\cO_F(-2\xi)\longrightarrow\cE\longrightarrow\cI_{Z\vert F}(-\xi-f)\longrightarrow0,
\end{equation}
where $\cE:=\cF(-2\xi)$.

Since $h^1\big(F,\cO_F(-\xi+f)\big)=0$, it follows that the bundle $\cE$ is uniquely determined by the scheme $Z$. 
\end{construction}

The main result of the section is the following proof of Theorem \ref{tInstanton} stated in the introduction.

\medbreak
\noindent{\it Proof of Theorem \ref{tInstanton}.}
We trivially have $c_1(\cE)=-h$ and $c_2(\cE)=\alpha\xi^2+\beta\xi f$ by construction. Moreover, $h^1\big(F,\cE\big)= h^1\big(F,\cI_{Z\vert F}(-\xi-f)\big)$ from the cohomology of Sequence \eqref{seqStandard}. 

For each connected component $Y\cong\p1$ of $Z$ we have $(-\xi-f)Y=-1$, hence $h^0\big(Z,\cO_F(-\xi-f)\otimes\cO_Z\big)=0$. The cohomology of the exact sequence
\begin{equation}
\label{seqIdeal}
0\longrightarrow\cI_{Z\vert F}\longrightarrow\cO_F\longrightarrow\cO_Z\longrightarrow0
\end{equation}
tensored by $\cO_F(-\xi-f)$ then yields $h^1\big(F,\cE\big)= h^1\big(F,\cI_{Z\vert F}(-\xi-f)\big)=0$.

We will now show that $\cE$ is $\mu$--stable. To this purpose we will make use of Lemma \ref{lHoppe}, proving that  if $15a+9b=\mu(\cO_F(a\xi+bf))\ge\mu(\cE)=-27$, i.e.
\begin{equation}
\label{AB}
b\ge-3-\frac53a
\end{equation}
then the cohomology of Sequence \eqref{seqStandard} tensored by $\cO_F(-a\xi-b f)$, i.e.
$$
0\longrightarrow\cO_F(-(a+2)\xi-bf)\longrightarrow\cE(-a\xi-bf)\longrightarrow\cI_{Z\vert F}(-(a+1)\xi-(b+1)f)\longrightarrow0,
$$
returns $h^0\big(F,\cE(-a\xi-b f)\big)=0$. If $a\ge0$ such a vanishing is trivial, hence we restrict our attention to the case $a\le-1$. 

If $a=-1$, then
$$
h^0\big(F,\cO_F(-(a+2)\xi-bf)\big)=h^0\big(F,\cO_F(-\xi-bf)\big)=0.
$$
Moreover, Inequality \eqref{AB} implies $b\ge-1$, hence 
$$
h^0\big(F,\cI_{Z\vert F}(-(a+1)\xi-(b+1)f)\big)= h^0\big(F,\cI_{Z\vert F}(-(b+1)f)\big)=0,
$$
because $Z\ne\emptyset$.

If $a\le-2$, then Inequality\eqref{AB} yields $-(a+2)-b=-(a+1)-(b+1)\le-1$, hence again 
\begin{gather*}
h^0\big(F,\cO_F(-(a+2)\xi-bf)\big)=0,\\
h^0\big(F,\cI_{Z\vert F}(-(a+1)\xi-(b+1)f)\big)\le h^0\big(F,\cO_F(-(a+1)\xi-(b+1)f)\big)=0.
\end{gather*}

We now prove that $\cE$ is generically trivial.  Indeed, if we restrict Sequence \eqref{seqStandard} to a line $L\in\Lambda$ not intersecting $Z$, one easily obtains the exact sequence
$$
0\longrightarrow\cO_{\p1}\longrightarrow\cE\otimes\cO_L\longrightarrow\cO_{\p1}(-1)\longrightarrow0,
$$
hence $\cE\otimes\cO_L\cong\cO_{\p1}\oplus\cO_{\p1}(-1)$ for such lines.

We now prove the assertion on the dimensions of the $\Ext$ groups. Since $\cE$ is $\mu$--stable, then it is simple, hence the equality $\Ext^3_{F}\big(\cE,\cE\big)=0$ follows from Equality \eqref{Ext3}. We will show below that
$$
\Ext^2_{F}\big(\cE,\cE\big)\cong H^2\big(F,\cE\otimes\cE^\vee\big)=0,
$$
hence 
$$
\dim\Ext^1_{F}\big(\cE,\cE\big)=8\alpha+6\beta-30
$$
thanks to Equality \eqref{Ext12}.

To this purpose, the cohomology of Sequence \eqref{seqStandard} tensored by $\cE^\vee\cong\cE(h)$ returns
$$
H^2\big(F,\cE(\xi+f)\big)\longrightarrow H^2\big(F,\cE\otimes\cE^\vee\big)\longrightarrow H^2\big(F,\cE\otimes\cI_{Z\vert F}(2\xi)\big),
$$
hence it suffices to check that $h^2\big(F,\cE(\xi+f)\big)=h^2\big(F,\cE\otimes\cI_{Z\vert F}(2\xi)\big)=0$.

We first check that $h^2\big(F,\cE(\xi+f)\big)=0$. Indeed, thanks to Proposition \ref{pLineBundle} the cohomologies of Sequences \eqref{seqStandard} tensored by $\cO_F(\xi+f)$ and \eqref{seqIdeal} return
$$
h^2\big(F,\cE(\xi+f)\big)\le h^1\big(F,\cO_Z\big).
$$
The dimension on the right is zero, because $Z$ is the disjoint union of smooth rational curves. 

Finally we check that $h^2\big(F,\cE\otimes\cI_{Z\vert F}(2\xi)\big)=0$. Thanks to Proposition \ref{pLineBundle}, the cohomology of Sequence \eqref{seqIdeal} tensored by $\cO_F(\xi-f)$ then yields
$$
h^2\big(F,\cI_{Z\vert F}(\xi-f)\big)\le h^1\big(Z,\cO_F(\xi-f)\otimes\cO_Z\big).
$$
Since $\cO_F(\xi-f)$ restricts to each component of $Z$ to a line bundle of degree either $1$ (if the component is in $\Lambda_N$), or $-1$ (if the component is in $\Lambda$), it follows that the dimension on the right is zero. In particular $h^2\big(F,\cI_{Z\vert F}(\xi-f)\big)=0$, hence the cohomology of Sequence \eqref{seqStandard} tensored by $\cO_F(2\xi)$ and Proposition \ref{pLineBundle} imply $h^2\big(F,\cE(2\xi)\big)=0$. We deduce that the cohomology of Sequence \eqref{seqIdeal} tensored by $\cE(2\xi)$ returns 
\begin{align*}
h^2\big(F,\cE\otimes\cI_{Z\vert F}(2\xi)\big)&\le h^1\big(Z,\cE(2\xi)\otimes\cO_Z\big)=\\
&=\sum_{i=1}^{\alpha-2}h^1\big(Z,\cE(2\xi)\otimes\cO_{L_i}\big)+\sum_{j=1}^{\alpha+\beta-4}h^1\big(Z,\cE(2\xi)\otimes\cO_{N_j}\big).
\end{align*}
Equality \eqref{Normal} and the definition of $\cE$ imply $\cE(2\xi)\otimes\cO_Z\cong\cN_{Z\vert F}$. Thus
\begin{gather*}
\cE(2\xi)\otimes\cO_{L_i}\cong\cO_{\p1}\oplus\cO_{\p1}(-1),\\
\cE(2\xi)\otimes\cO_{N_j}\cong\cO_{\p1}(1)\oplus\cO_{\p1},
\end{gather*}
hence $h^2\big(F,\cE\otimes\cI_{Z\vert F}(2\xi)\big)=0$.
\qed
\medbreak

Recall that $\cI_F(\alpha\xi^2+\beta  \xi f)$ has been defined in the introduction as the locus of points representing instanton bundles with charge $\alpha\xi^2+\beta  \xi f$ in the moduli space $\cM_F(2;0,\alpha\xi^2+\beta  \xi f)$ of $\mu$--stable vector bundles with respect to $\cO_F(h)$. The following corollary is almost immediate.

\begin{corollary}
\label{cInstanton}
For each $\alpha,\beta\in\bZ$  such that $\alpha\ge2$, $\alpha+\beta\ge4$ and $4\alpha+3\beta\ge15$ there is an irreducible component 
$$
\cI_F^{0}(\alpha\xi^2+\beta  \xi f)\subseteq\cI_F(\alpha\xi^2+\beta  \xi f)
$$
 which is  generically smooth of dimension $8\alpha+6\beta-30$ and containing all the points corresponding to the bundles obtained via Construction \ref{conInstanton}.
\end{corollary}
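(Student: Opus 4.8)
The plan is to combine the infinitesimal computation recorded in Theorem \ref{tInstanton} with an irreducibility statement for the parameter space of Construction \ref{conInstanton}.

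First I would fix one bundle $\cE$ produced by Construction \ref{conInstanton}. By Theorem \ref{tInstanton} it is $\mu$--stable, hence simple, and it satisfies $\Ext^2_F\big(\cE,\cE\big)=\Ext^3_F\big(\cE,\cE\big)=0$ together with $\dim\Ext^1_F\big(\cE,\cE\big)=8\alpha+6\beta-30$. The standard deformation theory of sheaves then applies at the point $[\cE]$ of the moduli space $\cM_F\big(2;0,\alpha\xi^2+\beta\xi f\big)$: the Zariski tangent space is $\Ext^1_F\big(\cE,\cE\big)$ and the obstructions lie in $\Ext^2_F\big(\cE,\cE\big)=0$, so the moduli space is smooth at $[\cE]$ of dimension $8\alpha+6\beta-30$. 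Since for a $\mu$--stable bundle with $c_1(\cE)=-h$ one has $h^0\big(F,\cE\big)=0$ automatically (as recalled in the introduction) and $q_F=0$, the instanton condition reduces to the single open vanishing $h^1\big(F,\cE\big)=0$; hence $\cI_F\big(\alpha\xi^2+\beta\xi f\big)$ is open in $\cM_F\big(2;0,\alpha\xi^2+\beta\xi f\big)$. Therefore $[\cE]$ lies on a unique irreducible component, generically smooth of the asserted dimension, and I would define $\cI_F^{0}\big(\alpha\xi^2+\beta\xi f\big)$ to be its intersection with the open locus $\cI_F$.

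Next I would show that \emph{every} bundle of Construction \ref{conInstanton} lands on this same component. The datum of the construction is the subscheme $Z$ of \eqref{ZInstanton}, namely a choice of $\alpha-2$ pairwise disjoint curves in $\Lambda$ and of $\alpha+\beta-4$ pairwise disjoint curves in $\Lambda_N$, the two collections being mutually disjoint. By Remarks \ref{rLine} and \ref{rPseudoLine} both $\Lambda\cong\p1$ and $\Lambda_N$ are irreducible, and distinct general members of each family are disjoint; hence such configurations are parameterized by a nonempty open subscheme $T$ of the irreducible variety $\mathrm{Sym}^{\alpha-2}(\Lambda)\times\mathrm{Sym}^{\alpha+\beta-4}(\Lambda_N)$, so $T$ is irreducible. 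All the subschemes $Z$ parameterized by $T$ share the same Hilbert polynomial, so they assemble into a universal subscheme $\mathcal{Z}\subseteq F\times T$, flat over $T$.

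Finally I would globalize Construction \ref{conInstanton} over $T$. The two cohomological inputs $h^2\big(F,\cO_F(-\xi+f)\big)=0$ and $h^1\big(F,\cO_F(-\xi+f)\big)=0$ of Proposition \ref{pLineBundle} hold uniformly along the fibres, so a relative version of the Serre correspondence (Theorem \ref{tSerre}) yields, possibly after an \'etale base change or a twist by a line bundle pulled back from $T$, a $T$--flat family $\widetilde{\cE}$ on $F\times T$ whose fibre over $t$ is the bundle attached to the corresponding $Z$. Since every fibre is $\mu$--stable, this family induces a classifying morphism $T\to\cM_F\big(2;0,\alpha\xi^2+\beta\xi f\big)$ whose image is precisely the set of Construction \ref{conInstanton} bundles. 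The image of the irreducible $T$ is irreducible, so its closure meets a single irreducible component of $\cM_F$; as each Construction bundle is a smooth point of $\cM_F$, it lies on a unique component, which must be that one. Hence they all lie on $\cI_F^{0}$. I expect the only delicate step to be this relative Serre construction: upgrading the fibrewise existence and uniqueness of Theorem \ref{tSerre} to an honest $T$--flat bundle $\widetilde{\cE}$ and the attached modular morphism. The vanishing $h^1\big(F,\cO_F(-\xi+f)\big)=0$ provides fibrewise uniqueness, which is exactly what allows the local Serre bundles to glue (up to a pullback twist from $T$, harmless for the map to moduli); verifying that the relative extension machinery goes through is where care is needed, everything else being either established in Theorem \ref{tInstanton} or a formal consequence of the irreducibility of $T$.
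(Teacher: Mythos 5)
Your proposal is correct and follows essentially the same route as the paper: parameterize the configurations $Z$ by a nonempty open subset of an irreducible variety (the paper uses the ordered product $\Lambda^{\times\alpha-2}\times\Lambda_N^{\times\alpha+\beta-4}$ rather than symmetric products, an immaterial difference), use the uniqueness of $\cE$ given $Z$ to obtain a flat family and hence a classifying morphism to the moduli space, and conclude from $\Ext^2_F\big(\cE,\cE\big)=0$ and $\dim\Ext^1_F\big(\cE,\cE\big)=8\alpha+6\beta-30$ that the irreducible image lies in a unique, generically smooth component of the asserted dimension. The extra care you take with the relative Serre correspondence and the openness of the instanton locus only makes explicit what the paper's terser proof leaves implicit.
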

\begin{proof}
The schemes as in Equality \eqref{ZInstanton} represent points in a non--empty open subset $\cU\subseteq\Lambda^{\times\alpha-2}\times \Lambda_N^{\times\alpha+\beta-4}$. Since the latter product is irreducible (see Remarks \ref{rLine} and \ref{rPseudoLine}), it follows that $\cU$ is irreducible as well. 

Since the  bundle $\cE$ in Sequence \eqref{seqStandard} is uniquely determined by the scheme $Z$, we obtain in this way a flat family of bundles  containing all the bundles obtained via Construction \ref{conInstanton} and parameterized by $\cU$. Thus we deduce the existence of a morphism $u\colon \cU\to \cI_F(\alpha\xi^2+\beta  \xi f)$. Every point in $u(\cU)$ is smooth because $\Ext^2_{F}\big(\cE,\cE\big)=0$ (see Theorem \ref{tInstanton}), thus there is a unique component $\cI_F^{0}(\alpha\xi^2+\beta  \xi f)$ containing $u(\cU)$: Theorem \ref{tInstanton} then implies
$$
\dim\cI_F^{0}(\alpha\xi^2+\beta  \xi f)=\dim \Ext^1_{F}\big(\cE,\cE\big)=8\alpha+6\beta-30.
$$
This last equality completes the proof of the corollary.
\end{proof}

\begin{remark}
\label{rNonEarnest}
The bundles constructed in the previous proof are certainly not earnest if $\alpha\ge4$, thanks to Corollary \ref{cSimplify}. 

Indeed, the cohomology of Sequence \eqref{seqStandard} tensored by $\cO_F(-E)\cong\cO_F(-\xi+f)$ and Proposition \ref{pLineBundle} yield the exact sequence
$$
0\longrightarrow H^1\big(F,\cE(-E)\big)\longrightarrow H^1\big(F,\cI_{Z\vert F}(-2\xi)\big)\longrightarrow \bC,
$$
hence $h^1\big(F,\cE(-E)\big)\ge h^1\big(F,\cI_{Z\vert F}(-2\xi)\big)-1$.
In order to compute $h^1\big(F,\cI_{Z\vert F}(-2\xi)\big)$ we consider the cohomology of Sequence \eqref{seqIdeal} tensored by $\cO_F(-2\xi)$, taking into account that $h^0\big(F,\cO_F(-2\xi)\big)=h^1\big(F,\cO_F(-2\xi)\big)=0$ and 
\begin{gather*}
\cO_F(-2\xi)\otimes\cO_{L_i}\cong\cO_{\p1},\\
\cO_F(-2\xi)\otimes\cO_{N_j}\cong\cO_{\p1}(-2).
\end{gather*}
It follows that $h^1\big(F,\cI_{Z\vert F}(-2\xi)\big)=h^0\big(Z,\cO_F(-2\xi)\otimes\cO_Z\big)=\alpha-2$, hence 
$$
\alpha-2\ge h^1\big(F,\cE(-E)\big)\ge\alpha-3.
$$
\end{remark}

\section{Existence of earnest instanton bundles on the blow up of $\p3$}
\label{sEarnest}
In this section we complete the study of instanton bundles on the blow up $F:=F_1$ of $\p3$ along a line: again $\xi$ denotes $\xi_1$. In spite of the previous Remark \ref{rNonEarnest}, a different choice of the scheme $Z$ allows us to construct earnest instanton bundles $\cE$ on $F$ which are generically trivial and $\mu$--stable with $c_2(\cE)=\alpha\xi^2+\beta \xi f$ for each admissible non--negative integers $\alpha,\beta$.

\begin{construction}
\label{conEarnest}
Let $\alpha$ and $\beta$ be integers such that $\alpha\ge2$, $\beta\ge1$ and $4\alpha+3\beta\ge15$. We take $M_1,\dots,M_{\alpha-2}$ and $N_1,\dots, N_{\beta-1}$ pairwise disjoint curves corresponding to points in $\Lambda_M$ and $\Lambda_N$ respectively and define 
\begin{equation}
\label{ZEarnest}
Z:=\bigcup_{i=1}^{\alpha-2}M_i\cup\bigcup_{j=1}^{\beta-1}N_j\subseteq F.
\end{equation}
Notice that the restriction $4\alpha+3\beta\ge15$ implies $Z\ne\emptyset$.

We claim that $\det(\cN_{Z\vert F})\cong\cO_F(\xi+f)\otimes\cO_Z$. We check such an isomorphism component by component: indeed
\begin{gather*}
\det(\cN_{Z\vert F})\otimes\cO_{M_i}\cong\cO_{\p1}(2)\cong\cO_F(\xi+f)\otimes\cO_{M_i},\\
\det(\cN_{Z\vert F})\otimes\cO_{N_j}\cong\cO_{\p1}(1)\cong\cO_F(\xi+f)\otimes\cO_{N_j},
\end{gather*}
thanks to Remarks \ref{rPseudoConic} and \ref{rPseudoLine}.

The equality $h^2\big(F,\cO_F(-\xi-f)\big)=0$ and Theorem \ref{tSerre} guarantee the existence of a vector bundle $\cF$ on $F$ with a section $s$ vanishing exactly along $Z$ and with $c_1(\cF)=\xi+f$, $c_2(\cF)=Z$, fitting into Sequence \eqref{seqSerre}. Tensoring such sequence by $\cO_F(-2\xi-f)$ we obtain the exact sequence
\begin{equation}
\label{seqEarnest}
0\longrightarrow\cO_F(-2\xi-f)\longrightarrow\cE\longrightarrow\cI_{Z\vert F}(-\xi)\longrightarrow0,
\end{equation}
where $\cE:=\cF(-2\xi-f)$.

The bundle $\cE$ is uniquely determined by $Z$, because $h^1\big(F,\cO_F(-\xi-f)\big)=0$.
\end{construction}

The main result of the section is the following proof of Theorem \ref{tEarnest} stated in the introduction.

\medbreak
\noindent{\it Proof of Theorem \ref{tEarnest}.}
We trivially have $c_1(\cE)=-h$ and $c_2(\cE)=\alpha\xi^2+\beta\xi f$ by construction.  Arguing as in the proof of Theorem \ref{tInstanton} one easily obtains from the cohomology of Sequence \eqref{seqEarnest} that $h^1\big(F,\cE\big)=0$. Let us prove that $\cE$ is $\mu$--stable, i.e. that $h^0\big(F,\cE(-a\xi-bf)\big)=0$ for  each pair of integers $a$ and $b$ satisfying Inequality \eqref{AB}. We will check this by showing that 
$$
h^0\big(F,\cO_F(-(a+2)\xi-(b+1)f)\big)=h^0\big(F,\cI_{Z\vert F}(-(a+1)\xi-bf)\big)=0
$$
in that range, again computing the cohomology of Sequence \eqref{seqEarnest}.

This is obvious if either $a\ge0$. If $a\le-1$, then 
$$
-(a+2)-(b+1)\le \frac23 a\le -1.
$$
Similarly
\begin{equation}
\label{Hard}
-(a+1)-b\le 2+\frac23 a\le -1,
\end{equation}
for $a\le-4$.  

Argueing as above, if $a=-1$, then the only cases we need to handle are $b=0,-1$, because all the other values of $b$ satisfying Inequality \eqref{AB} satisfy Inequality \eqref{Hard} as well. If $b=0$, then we have to check the vanishing $h^0\big(F,\cI_{Z\vert F}\big)=0$, which is trivial because $Z\ne\emptyset$. If $b=-1$, then we have to check the vanishing $h^0\big(F,\cI_{Z\vert F}(f)\big)=0$. 
If $\alpha\ge3$, then no fibres of $\pi$ contain a curve in $\Lambda_M$. 
If $\alpha=2$, then the restriction on the charge forces $\beta\ge 3$ and the vanishing is still trivial because no fibre of $\pi$ can contain two or more disjoint curves in $\Lambda_N$. 

If $a=-2$, then we have only to deal with $b=1$, i.e. we have to check that $h^0\big(F,\cI_{Z\vert F}(E)\big)=0$ which is easy to check: similarly for the case $a=-3$. It follows that $\cE$ is an instanton bundle. 

Restricting Sequence \eqref{seqEarnest} to a general line $L\in\Lambda$ one deduces that $\cE$ is generically trivial. In order to show that $\cE$ is earnest we can use the same argument of Remark \ref{rNonEarnest}. The cohomology of Sequence \eqref{seqEarnest} tensored by $\cO_F(-\xi+f)$ and Proposition \ref{pLineBundle} yield $h^1\big(F,\cE(-\xi+f)\big)=h^1\big(F,\cI_{Z\vert F}(-2\xi+f)\big)$. The cohomology of Sequence \eqref{seqIdeal} tensored by $\cO_F(-2\xi+f)$ yields 
$$
h^1\big(F,\cI_{Z\vert F}(-2\xi+f)\big)=h^0\big(Z,\cO_F(-2\xi+f)\otimes\cO_Z\big).
$$
Finally, since
\begin{gather*}
\cO_F(-2\xi+f)\otimes\cO_{M_i}\cong\cO_{\p1}(-1),\\
\cO_F(-2\xi+f)\otimes\cO_{N_j}\cong\cO_{\p1}(-2),
\end{gather*}
it follows that $h^0\big(Z,\cO_F(-2\xi+f)\otimes\cO_Z\big)=0$. Thus $\cE$ is earnest, thanks to Corollary \ref{cSimplify}.

As in the proof of Theorem \ref{tInstanton} we know that $\cE$ is simple, and $\Ext^3_{F}\big(\cE,\cE\big)=0$. It remains to check that $h^2\big(F,\cE\otimes\cE^\vee\big)=0$ computing the cohomology of  Sequence \eqref{seqStandard} tensored by $\cE^\vee\cong\cE(h)$: again it suffices to check that 
$$
h^2\big(F,\cE(\xi)\big)=h^2\big(F,\cE\otimes\cI_{Z\vert F}(2\xi+f)\big)=0.
$$
Thanks to Equality \eqref{Serre1}, the former vanishing has been proved in Proposition \ref{pTable}, because $h^2\big(F,\cE(\xi)\big)=h^1\big(F,\cE(-\xi)\big)=0$. The latter can be obtained imitating verbatim the argument for proving the analogous vanishing in the proof of Theorem \ref{tInstanton}.
\qed
\medbreak

In particular we have proved the existence of earnest instanton bundles inside $\cI_F(\alpha\xi^2+\beta\xi f)$. The same argument of the proof of Corollary \ref{cInstanton} also proves the following corollary.

\begin{corollary}
\label{cEarnest}
For each $\alpha,\beta\in\bZ$  such that $\alpha\ge2$, $\beta\ge1$ and $4\alpha+3\beta\ge15$ there is an irreducible component 
$$
\cI_F^1(\alpha\xi^2+\beta  \xi f)\subseteq\cI_F(\alpha\xi^2+\beta  \xi f)
$$
 which is  generically smooth of dimension $8\alpha+6\beta-30$ and containing all the points corresponding to the bundles obtained via Construction \ref{conEarnest}.
\end{corollary}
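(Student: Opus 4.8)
The plan is to transcribe the proof of Corollary \ref{cInstanton}, substituting the data of Construction \ref{conEarnest} for those of Construction \ref{conInstanton}; the substance of the argument is already contained in Theorem \ref{tEarnest}, so what remains is essentially bookkeeping.

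First I would identify the parameter space. The subschemes $Z$ of Equality \eqref{ZEarnest} are built from $\alpha-2$ curves of $\Lambda_M$ and $\beta-1$ curves of $\Lambda_N$, so the natural parameter space is the product $\Lambda_M^{\times(\alpha-2)}\times\Lambda_N^{\times(\beta-1)}$. The tuples giving pairwise disjoint curves form an open subset $\cU$ of this product, and $\cU$ is non-empty because distinct general members of $\Lambda_M$ and $\Lambda_N$ are mutually disjoint (Remarks \ref{rPseudoConic} and \ref{rPseudoLine}). Since both $\Lambda_M$ and $\Lambda_N$ are irreducible (and rational) by those same remarks, the product is irreducible, and hence so is the open subset $\cU$.

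Next I would promote this to a family of bundles. Construction \ref{conEarnest} shows that the bundle $\cE$ fitting into Sequence \eqref{seqEarnest} is uniquely determined by $Z$, since $h^1\big(F,\cO_F(-\xi-f)\big)=0$. This yields a flat family of $\mu$--stable instanton bundles over $\cU$, comprising every bundle produced by Construction \ref{conEarnest}, and therefore a classifying morphism $u\colon\cU\to\cI_F(\alpha\xi^2+\beta\xi f)$. As $\cU$ is irreducible, the image $u(\cU)$ is an irreducible subset of the moduli space.

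Finally I would invoke the vanishing $\Ext^2_F\big(\cE,\cE\big)=0$ from Theorem \ref{tEarnest}: this is precisely the obstruction space of the deformation problem, so every point of $u(\cU)$ is a smooth point of $\cM_F(2;0,\alpha\xi^2+\beta\xi f)$. A smooth point lies on a unique irreducible component, whence the irreducible set $u(\cU)$ is contained in a single generically smooth component $\cI_F^1(\alpha\xi^2+\beta\xi f)$ whose dimension equals the local dimension at such a point, namely $\dim\Ext^1_F\big(\cE,\cE\big)=8\alpha+6\beta-30$ by Theorem \ref{tEarnest}. There is no genuine obstacle here once Theorem \ref{tEarnest} is available; the only point demanding attention is the correct description and non-emptiness of $\cU$, which differs from the instanton case and rests on the disjointness assertions of Remarks \ref{rPseudoConic} and \ref{rPseudoLine}. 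I note also that $u$ need neither be dominant onto its component nor have finite fibres, but neither eventuality affects the conclusion, since the argument uses only the irreducibility of $u(\cU)$ and the smoothness of its points.
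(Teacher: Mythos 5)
Your proposal is correct and coincides with the paper's own argument: the paper's proof of Corollary \ref{cEarnest} consists precisely of observing that the schemes of Equality \eqref{ZEarnest} form a non-empty irreducible open subset of $\Lambda_M^{\times(\alpha-2)}\times\Lambda_N^{\times(\beta-1)}$ and then running the proof of Corollary \ref{cInstanton} verbatim (flat family from the uniqueness of $\cE$ given $Z$, classifying morphism $u$, smoothness of points of $u(\cU)$ from $\Ext^2_F\big(\cE,\cE\big)=0$, and dimension from $\dim\Ext^1_F\big(\cE,\cE\big)$ via Theorem \ref{tEarnest}). Every step you spell out is exactly what the paper compresses into its two-line proof.
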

\begin{proof}
The schemes as in Equality \eqref{ZEarnest} represent points in a non--empty open subset $\cV\subseteq\Lambda_M^{\times\alpha-2}\times \Lambda_N^{\times\beta-1}$ which is irreducible. Thus the proof runs along the same lines of the proof of Corollary \ref{cInstanton}.
\end{proof}

\section{Some remarks and questions on the blow up of $\p3$}
\label{sFinal}
In this section we collect some comments and questions on the structure of the moduli space of instanton bundles on $F:=F_1$.

In view of the irreducibility of the moduli space of instanton bundles on $\p3$ recently proved in \cite{Tik1,Tik2}, the following question seems to be natural.

\begin{question}
Is it true that the scheme
$$
\cI_F(\alpha\xi^2+\beta  \xi f)
$$
is irreducible and smooth?
\end{question}

Let us deal with the above question for instanton bundles $\cE$ of  {\sl minimal charge}, i.e. instanton bundles whose charge has minimal degree. Corollary \ref{cMinimal} implies that $c_2(\cE)h\ge15$. When equality holds, argueing as in the proof of Corollary \ref{cMinimal}, we deduce $c_2(\cE)=3\xi^2+\xi f$ and we have the following affirmative answer to the above question
$$
\cI_F(3\xi^2+  \xi f)=\{\ \Omega_{F\vert\p1}(-f)\ \},
$$
thanks to the proposition below.

\begin{proposition}
\label{pMinimal}
If $\cE$ is an instanton bundle on $F$ with $c_2(\cE)h=15$, then $\cE\cong\Omega_{F\vert\p1}(-f)$.
\end{proposition}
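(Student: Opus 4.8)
The plan is to pin down the charge, collapse the monad furnished by Theorem \ref{tSimplify} to a single module, and recognize that module as $\Omega_{F\vert\p1}(-f)$. First I would note that the hypothesis $c_2(\cE)h=4\alpha+3\beta=15$, combined with the constraints $\alpha\ge2$ and $\alpha+\beta\ge4$ coming from Corollary \ref{cBound}, leaves only the solution $\alpha=3$, $\beta=1$, so $c_2(\cE)=3\xi^2+\xi f$; a short Chern class computation from the relative Euler sequence \eqref{seqOmega} confirms that $\Omega_{F\vert\p1}(-f)$ itself has $c_1=-h$ and $c_2=3\xi^2+\xi f$, so it is the expected answer. Writing $\gamma:=h^1\big(F,\cE(-\xi+f)\big)$ and $\delta:=h^1\big(F,\cE(-\xi+2f)\big)$ as in Theorem \ref{tSimplify}, the equality $\alpha+\beta=4$ makes the chain $\alpha+\beta-4+2\gamma\ge\delta\ge2\gamma$ of Corollary \ref{cBound} collapse to $\delta=2\gamma$, so everything will hinge on showing $\gamma=0$.

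The hard part is exactly this vanishing $\gamma=0$, and I would obtain it by reproducing the destabilising argument used in the proof of Corollary \ref{cMinimal}. Tensoring the pulled--back Euler sequence \eqref{seqEuler} by $\cE(\xi)$ and passing to cohomology, the vanishings $h^0\big(F,\cE(\xi-f)\big)=h^0\big(F,\cE(\xi)\big)=0$ (from $\mu$--semistability via Lemma \ref{lHoppe}) together with $h^1\big(F,\cE(\xi)\big)=h^2\big(F,\cE(-\xi)\big)=0$ (the latter being the entry $e^{-5,4}=\alpha+\beta-4=0$ of Proposition \ref{pTable}, read off through Equality \eqref{Serre1}) produce an isomorphism $H^0\big(F,\cE(\xi+f)\big)\cong H^1\big(F,\cE(\xi-f)\big)$. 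The entry $e^{0,1}=\beta+\gamma-1$ of Proposition \ref{pTable} identifies the dimension of the right-hand side as $\beta+\gamma-1=\gamma$, so $h^0\big(F,\cE(\xi+f)\big)=\gamma$. But $\cE(\xi+f)=\cE(-a\xi-bf)$ with $a=b=-1$ and $15a+9b=-24>-27=\mu(\cE)$, so Lemma \ref{lHoppe} forces $h^0\big(F,\cE(\xi+f)\big)=0$ for the $\mu$--semistable $\cE$. Hence $\gamma=0$, and then $\delta=2\gamma=0$ by the displayed inequality of Corollary \ref{cBound}.

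Finally I would substitute $\alpha=3$, $\beta=1$, $\gamma=\delta=0$ into the monad \eqref{Monad}. All the exponents $\alpha+\beta-4$, $\gamma$, $\delta-2\gamma$ and $\beta+\gamma-1$ vanish, so $\cC^{-1}_1=\cC^1_1=0$ while $\cC^0_1=\Omega_{F\vert\p1}(-f)$; the monad thus degenerates to its central term and its cohomology is $\cE\cong\Omega_{F\vert\p1}(-f)$. The only genuine obstacle in this scheme is the step $\gamma=0$: a priori $\gamma$ is merely a non-negative integer, and the whole point is that any positive value of $\gamma$ would manufacture a global section of $\cE(\xi+f)$ incompatible with $\mu$--semistability, precisely the mechanism already exploited in Corollary \ref{cMinimal}.
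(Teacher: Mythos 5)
Your proof is correct and follows essentially the same route as the paper: pin down $\alpha=3$, $\beta=1$, identify $\gamma$ with $h^0\big(F,\cE(\xi+f)\big)$ (which vanishes by $\mu$--semistability via Lemma \ref{lHoppe}), deduce $\delta=2\gamma=0$, and collapse the monad of Theorem \ref{tSimplify} to its middle term $\Omega_{F\vert\p1}(-f)$. The only (immaterial) difference is that you tensor Sequence \eqref{seqEuler} by $\cE(\xi)$ and read off $e^{0,1}=\beta+\gamma-1$, whereas the paper tensors it by $\cE(-\xi)$ and uses $e^{-4,4}=\beta+\gamma-1$ before applying Serre duality \eqref{Serre1} -- the two computations are Serre--dual to one another.
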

\begin{proof}
As pointed out above we know that if there is an instanton bundle $\cE$ with $c_2(\cE)h=15$, then $c_2(\cE)=3\xi^2+\xi f$. Moreover Construction \ref{conInstanton} with $\alpha=3$ and $\beta=1$ guarantees the existence of at least one such instanton bundle $\cE$.

We now prove that $\cE\cong\Omega_{F\vert\p1}(-f)$. Since $\alpha=3$, $\beta=1$, it follows from  by Corollary \eqref{cBound} that $\delta=2\gamma$. On the one hand, the cohomology of Sequence \eqref{seqEuler} tensored by $\cE$ and Proposition \ref{pTable} return
$$
h^3\big(F,\cE(-\xi-f)\big)=h^2\big(F,\cE(-\xi+f)\big)=h^1\big(F,\cE(-\xi+f)\big)=\gamma.
$$
On the other hand, Equality \eqref{Serre1} and Lemma \ref{lHoppe} yield 
$$
h^3\big(F,\cE(-\xi-f)\big)=h^0\big(F,\cE(\xi+f)\big)=0.
$$
It follows  $\delta=0$ and that $\cE$ is earnest, thanks to Corollary \ref{cSimplify}. Thus Theorem \ref{tSimplify} implies that $\cE$ is the cohomology of Monad \eqref{Monad} with $\alpha=3$, $\beta=1$ and $\gamma=\delta=0$, hence $\cE\cong\Omega_{F\vert\p1}(-f)$.
\end{proof}

\begin{remark}
By combining the above Proposition and Theorem \ref{tEarnest} we also deduce that $\Omega_{F\vert\p1}(-f)$ can be obtained via Construction \ref{conEarnest} starting from a general section in $H^0\big(F,\Omega_{F\vert\p1}(2\xi)\big)$.
\end{remark}

Recall that $\alpha+\beta\ge4$ for each instanton bundle $\cE$ with $c_2(\cE)=\alpha\xi^2+\beta\xi f$. Thus the aforementioned bundle $\Omega_{F\vert\p1}(-f)$ can be viewed as a particular case of instanton bundles such that $\beta=4-\alpha$, i.e. with charge $\alpha\xi^2+\beta\xi f$. We spend some words about such bundles in what follows.

In the case $\alpha+\beta=4$ one has $\delta=2\gamma$. As in the proof of Theorem \ref{tSimplify} the cohomology of Sequences \eqref{Display} tensored by $\cO_F(2\xi)$ returns $h^0\big(F,\cE(2\xi)\big)\ge2$.

Let $s\in H^0\big(F,\cE(2\xi)\big)$ be a non--zero section. Then $(s)_0=C\cup S$ where $C$ is either empty, or a subscheme of pure codimension $2$ and $S$ is either $0$, or $S\in \vert a\xi+bf\vert$ with $a\ge0$, $a+b\ge0$. We deduce that $\cE(2\xi-S)$ has a section vanishing on $C$, hence $h^0\big(F,\cE((2-a)\xi-bf)\big)\ne0$. Since $\cE$ is $\mu$--semistable, it follows from Lemma \ref{lHoppe} that $15(a-2)+9b\le-27$, hence $2a \le 5a+3b\le 1$. 

Thus $S=0$ necessarily and the general $s\in  H^0\big(F,\cE(2\xi)\big)$ vanishes exactly along a subscheme $C\subseteq F$ of pure codimension $2$ whose class in $A^2(F)$ is $c_2(\cE(2\xi))=(\alpha-2)(\xi^2-\xi f)$. Thus $\cE$ fits into an exact sequence of the form
$$
0\longrightarrow\cO_F(-2\xi)\longrightarrow\cE\longrightarrow\cI_{C\vert F}(-\xi-f)\longrightarrow0.
$$

Since $CE=2-\alpha$ and $\alpha\ge3$ because $c_2(\cE)h\ge15$ (see Corollary \ref{cMinimal}), we deduce that a component $Y$ of $C$ is contained in $E$. The line bundle $\cO_F(\xi)$ is globally generated, hence both $Y\xi$ and $(C-Y)\xi$ must be non--negative. We deduce that the class of $Y$ inside $A^2(F)$ is $u(\xi^2-\xi f)$ for some positive $u\in\bZ$. The natural injection $\Pic(E)\cong A^1(E)\subseteq A^2(F)$ yields that the support of $Y$ is actually a line in $\Lambda\cong\p1$, because $E\cong\p1\times\p1$. 

Let $\alpha\ge4$. On the one hand $\cE$ cannot be earnest because $0\ge\beta\ge 1-\gamma$, thanks to Corollary \eqref{cBound}. On the other hand, in Construction \ref{conInstanton} we defined a rational map
$$
\lambda\colon Sym^{\alpha-2}\Lambda\dashrightarrow\cI_F(\alpha\xi^2+\beta \xi f)
$$
on the complement of the union of the diagonals. Its image is contained in the component $\cI_F^0(\alpha\xi^2+\beta\xi f)$. The map $\lambda$ can never be dominant because the fibre at a point in $\im(\lambda)$ has dimension $h^0\big(F,\cE(2\xi)\big)-1\ge1$.

In particular, unreduced schemes supported on lines in $\Lambda$ play a non--trivial role in the structure of $\cI_F(\alpha\xi^2+\beta \xi f)$. 

The discussion above shows that the study of the irreducibility and smoothness of $\cI_F(\alpha\xi^2+  \beta\xi f)$ could be quite hard in general. Nevertheless, when $\alpha,\beta\in\bZ$ satisfy $\alpha\ge2$, $\beta\ge1$ and $4\alpha+3\beta\ge15$, we constructed in the previous section at least the two irreducible components $\cI_F^0(\alpha\xi^2+\beta  \xi f)$ and $\cI_F^1(\alpha\xi^2+\beta  \xi f)$. 

Let $\cI_F^{earnest}(\alpha\xi^2+\beta\xi f)$ be the closure inside $\cI_F(\alpha\xi^2+\beta\xi f)$ of the locus of points representing earnest bundles. The condition $h^1\big(F,\cE(-E)\big)$ is open on flat family, hence $\cI_F^1(\alpha\xi^2+\beta\xi f)\subseteq\cI_F^{earnest}(\alpha\xi^2+\beta\xi f)$. Moreover, if $\alpha=2$, then $\cI_F^0(\alpha\xi^2+\beta\xi f)\subseteq\cI_F^{earnest}(\alpha\xi^2+\beta\xi f)$, thanks to Remark \ref{rNonEarnest}.

Thus the following perhaps simpler question arises naturally.

\begin{question}
\label{qEquality}
Is it true that
$$
\cI_F^0(\alpha\xi^2+\beta  \xi f)=\cI_F^1(\alpha\xi^2+\beta  \xi f)=\cI_F^{earnest}(\alpha\xi^2+ \beta \xi f)
$$
when $\beta$ is a positive integer?
\end{question}

We already described above the trivial case $\alpha=3$ and $\beta=1$ corresponding to $c_2(\cE)h=15$. When $c_2(\cE)h=16$ one easily checks $\beta\le0$. Thus the first non--trivial case is $c_2(\cE)h=17$, which results in $\alpha=2$ and $\beta=3$ when $\beta$ is positive. We will show below that the answer to the above question is affirmative in this case. To this purpose it suffices to check that $\cI_F^{earnest}(2\xi^2+ 3 \xi f)$ is irreducible of dimension $4$.

Let $\cE$ be an earnest instanton bundle with $c_2(\cE)=2\xi^2+3\xi f$. The cohomology of Sequences \eqref{seqEuler} tensored by $\cE(\xi)$ and $\cE(\xi+f)$ yields $h^0\big(F,\cE(\xi+2f)\big)=1$.

Let $s\in H^0\big(F,\cE(\xi+2f)\big)$ be a non--zero section. Then $(s)_0=C\cup S$ where $C$ is either empty, or a subscheme of pure codimension $2$ and $S$ is either $0$, or $S\in \vert a\xi+bf\vert$ with $a\ge0$, $a+b\ge0$. We deduce that $\cE(-S)$ has a section vanishing on $C$, hence $h^0\big(F,\cE((1-a)\xi+(2-b)f)\big)\ne0$. Since $\cE$ is $\mu$--semistable, it follows from Lemma \ref{lHoppe} that $15(a-1)+9(b-2)\le-27$, hence $2a \le 5a+3b\le 2$. Thus either $a=1$, hence $b=-1$, or $S=0$.

The case $a=1$ and $b=-1$ does not occur. Indeed, on the one hand, we checked above that $h^0\big(F,\cE(3f)\big)\ne0$. On the other hand the cohomology of Sequences \eqref{seqEuler} tensored by $\cE(f)$ and $\cE(2f)$ returns $h^0\big(F,\cE(3f)\big)=0$, a contradiction. 

We deduce that $S=0$. Since $c_2(\cE(\xi+2f))=0$, it follows that $\cE$ fits into a sequence of the form
\begin{equation*}
0\longrightarrow\cO_F(-\xi-2f)\longrightarrow\cE\longrightarrow\cO_F(-2\xi+f)\longrightarrow0.
\end{equation*}
Since
$$
\dim\Ext^1_F\big(\cO_F(-2\xi+f),\cO_F(-\xi-2f)\big)=h^1\big(F,\cO_F(\xi-3f)\big)=5,
$$
it follows that $\cI_F^{earnest}(2\xi^2+ 3 \xi f)$ is isomorphic to a non--empty open subset of $\p4$. In particular $\cI_F^{earnest}(2\xi^2+ 3 \xi f)$ is irreducible of dimension $4$, which is what we claimed above.

\begin{remark}
One can easily prove using Lemma \ref{lHoppe}  that each non--zero element $\Ext^1_F\big(\cO_F(-2\xi+f),\cO_F(-\xi-2f)\big)$  returns a $\mu$--semistable instanton bundle.

The above discussion implies that the general element actually induces an earnest, generically trivial, $\mu$--stable instanton bundle.
\end{remark}

\begin{remark}
It is not difficult to check that the unique value of the charge such that there are instanton bundles which are extensions of line bundles is exactly $2\xi^2+3\xi f$.
\end{remark}

\section{Instanton bundles on $\p1\times\p2$}
\label{sSegre}
In this last section we will describe the due changes to the arguments used in the previous sections for dealing with instanton bundles on $F_0=\p1\times\p2$. Again we will omit the subscript $e=0$ in the formulas, thus we will simply write  $F$, $\xi$, $\sigma$, $\cP$, $h$, $\cC^\bullet$ for $F_0$, $\xi_0$, $\sigma_0$, $\cP_0$, $h_0$, $\cC^\bullet_0$.  

In this case $\xi^3=0$, $\xi^2f=1$ and $3(1+e)a+9(a+b)=12a+9b$ in Lemma \ref{lHoppe}. Moreover, Sequence \eqref{seqOmega}  is the pull--back of the standard Euler sequence on $\p2$ via $\sigma$ and
$$
\Omega_{F\vert \p1}\cong\sigma^*\Omega_{\p2},\qquad \Omega_{F} \cong\cO_{F}(-2f)\oplus\sigma^*\Omega_{\p2}.
$$

The first step is to compute the cohomology of $\cO_F(a\xi+b f)$.

\begin{proposition}
\label{pLineBundleSegre}
We have
$$
\begin{aligned}
h^0\big(F,\cO_F(a\xi+b f)\big)&={a+2\choose2}{b+1\choose1},\\
h^1\big(F,\cO_F(a\xi+b f)\big)&={a+2\choose2}{-1-b\choose1},\\
h^2\big(F,\cO_F(a\xi+b f)\big)&={-1-a\choose2}{b+1\choose1},\\
h^3\big(F,\cO_F(a\xi+b f)\big)&={-1-a\choose2}{-1-b\choose1}.
\end{aligned}
$$
\end{proposition}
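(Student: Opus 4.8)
The plan is to exploit the fact that $F_0=\p1\times\p2$ is a genuine product, so that the line bundle in question splits as an external tensor product. First I would note that, by the definitions of $\xi$ and $f$ recalled at the start of the section,
$$
\cO_F(a\xi+b f)\cong\sigma^*\cO_{\p2}(a)\otimes\pi^*\cO_{\p1}(b),
$$
that is, $\cO_F(a\xi+b f)$ is the box product $\cO_{\p2}(a)\boxtimes\cO_{\p1}(b)$ on $\p2\times\p1$. The Künneth formula for coherent cohomology of a box product on a product of projective varieties then gives, for each $k$,
$$
H^k\big(F,\cO_F(a\xi+b f)\big)\cong\bigoplus_{i+j=k}H^i\big(\p2,\cO_{\p2}(a)\big)\otimes H^j\big(\p1,\cO_{\p1}(b)\big).
$$

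Next I would substitute the standard cohomology of line bundles on projective space. On $\p2$ the only possibly non--zero groups are $H^0$ and $H^2$, of dimensions $\binom{a+2}{2}$ and, by Serre duality, $\binom{-1-a}{2}$; on $\p1$ the only possibly non--zero groups are $H^0$ and $H^1$, of dimensions $\binom{b+1}{1}$ and, again by Serre duality, $\binom{-1-b}{1}$, with the usual convention that a binomial coefficient vanishes as soon as its top entry is smaller than its bottom entry. Since $H^1(\p2,\cO_{\p2}(a))=0$ and $H^{\ge2}(\p1,\cdot)=0$, the only surviving summands are $H^0\otimes H^0$ in degree $0$, $H^0\otimes H^1$ in degree $1$, $H^2\otimes H^0$ in degree $2$ and $H^2\otimes H^1$ in degree $3$; multiplying the corresponding dimensions produces exactly the four formulas in the statement.

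There is essentially no genuine obstacle: the argument is a direct computation, and the only point needing care is the bookkeeping of the binomial--coefficient convention, so that the single closed formula correctly returns $0$ in the degrees where the cohomology vanishes. This is precisely the convention already in force in Proposition \ref{pLineBundle}. Alternatively, to keep the exposition parallel to the $F_1$ case of Proposition \ref{pLineBundle}, one could push forward along $\pi$: since $\cP_0\cong\cO_{\p1}^{\oplus3}$ is trivial one has $R^i\pi_*\cO_F(a\xi)\cong H^i\big(\p2,\cO_{\p2}(a)\big)\otimes\cO_{\p1}$, and combining this with the cohomology of $\cO_{\p1}(b)$ through the degenerate Leray spectral sequence recovers the same decomposition and hence the same four formulas.
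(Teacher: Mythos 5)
Your proposal is correct and is exactly the paper's argument: the paper's entire proof reads ``It suffices to apply the K\"unneth formulas,'' and your write-up simply spells out that Künneth computation (box-product decomposition, vanishing of $H^1$ on $\p2$ and of $H^{\ge 2}$ on $\p1$, and the standard binomial formulas with the vanishing convention), so no further comparison is needed.
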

\begin{proof}
It suffices to apply the K\"unneth formulas.
\end{proof}

We follow the same methods used in the previous sections. Indeed we set 
\begin{align*}
(\cF_{-5},\cF_{-4}&,\cF_{-3},\cF_{-2},\cF_{-1},\cF_0):=\\
&:=(\cO_F(-\xi-f),\cO_F(-\xi),\cO_F(-f),\cO_F,\cO_F(\xi-f),\cO_F(\xi)),
\end{align*}
\begin{align*}
(\cG_{0},\cG_{1}&,\cG_{2},\cG_{3},\cG_{4},\cG_5):=\\
&:=(\cO_F(-\xi),\cO_F(-\xi-f),\Omega_{F\vert\p1},\Omega_{F\vert\p1}(-f),\cO_F(-2\xi),\cO_F(-2\xi-f)),
\end{align*}
(the Orlov collection with respect to $\cO_F(\xi)$ and its dual tensored by $\cO_F(\xi)$ and $\cO_F(-\xi)$ respectively: see \cite[Corollary 2.6]{Orl}).
\begin{lemma}
\label{lAOSegre}
Let $\cE$ be an instanton bundle on $F$.

Then $\cE$  is the cohomology in degree $0$ of a complex $\widehat{\cC}^\bullet$ with
$$
{\cC}^i:= \bigoplus_{q+p=i}E^{p,q}:=H^{q+\lceil\frac p2\rceil}\big(F,\cE\otimes\cF_{-p}\big)\otimes\cG_{p}.
$$
\end{lemma}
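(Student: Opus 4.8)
The plan is to mirror the proof of Lemma \ref{lAO}, applying the Ancona--Ottaviani theorem \cite[Theorem 8]{A--O} to the $\p2$--bundle $\pi\colon F\cong\bP(\cP_0)\to\p1$, where $\cP_0=\cO_{\p1}^{\oplus3}$. The instanton hypothesis plays no role at this stage: exactly as in Lemma \ref{lAO}, the statement is the purely formal output of the relative Beilinson--type resolution attached to the Orlov collection, which here is normalised by $\cO_F(\xi)$ rather than by $\cO_F(\xi-f)$ as it was for $F_1$.

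First I would fix the geometric data required by \cite[Theorem 8]{A--O}. Since the collection is normalised by $\cO_F(\xi)$, the correct choice is $\mathcal H:=\cP_0$, so that $\mathcal H(1)\cong\cO_{\p1}(1)^{\oplus3}$ is globally generated, the relative universal line bundle is $\cO_{\bP(\mathcal H)}(-1)\cong\cO_F(-\xi)$ and $\cO_{\bP(\mathcal H)}(1)\cong\cO_F(\xi)$. Dualising Sequence \eqref{seqOmega} with $e=0$ (equivalently, pulling back the Euler sequence of $\p2$ via $\sigma$), the relative universal quotient bundle $\cQ$ then satisfies $\cQ^\vee\cong\Omega_{F\vert\p1}(\xi)$, in analogy with the isomorphism $\cQ^\vee\cong\Omega_{\p2}(1)$ on $\p2$.

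Next I would apply \cite[Theorem 8]{A--O} to the complex $\cE(\xi)^\bullet$ concentrated in degree $0$. This produces a complex whose term in degree $i$ is
\begin{equation*}
\bigoplus_{s+p=i}\ \bigoplus_{a+b=p}H^{s}\big(F,\cE((a+1)\xi+b f)\big)\otimes\wedge^{-a}\big(\Omega_{F\vert\p1}(\xi)\big)\otimes\pi^*\wedge^{-b}\big(\Omega_{\p1}(1)\big),
\end{equation*}
and which is exact in every degree except $0$, where its cohomology is $\cE(\xi)$. Letting $a\in\{0,-1,-2\}$ and $b\in\{0,-1\}$, the six weights $\cE((a+1)\xi+bf)$ are precisely the twists $\cE\otimes\cF_{-p}$ appearing in the statement, while the corresponding factors $\wedge^{-a}(\Omega_{F\vert\p1}(\xi))\otimes\pi^*\wedge^{-b}(\Omega_{\p1}(1))$ become exactly the bundles $\cG_{p}$ after the whole complex is tensored by $\cO_F(-\xi)$; here one uses $\Omega_{\p1}(1)\cong\cO_{\p1}(-1)$ and $\det\Omega_{F\vert\p1}\cong\cO_F(-3\xi)$, which gives $\wedge^2(\Omega_{F\vert\p1}(\xi))\cong\cO_F(-\xi)$. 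Tensoring by $\cO_F(-\xi)$ turns the cohomology $\cE(\xi)$ into $\cE$ and leaves the cohomology coefficients $H^{\bullet}(F,\cE\otimes\cF_{-p})$ untouched, yielding the asserted complex $\widehat{\cC}^\bullet$.

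I do not expect a serious obstacle, since when $e=0$ the weights $(a+1)\xi+bf$ are even more transparent than the weights $(a+1)\xi+(b-a-1)f$ occurring for $F_1$. The one delicate point, and the step deserving the most care, is the bookkeeping of the normalising twists: one must verify that the right choice is $\mathcal H=\cP_0$ (and not $\cP_0(-1)$, which was the correct choice for $F_1$), so that $\cO_{\bP(\mathcal H)}(1)$ coincides with the collection base $\cO_F(\xi)$, and then check, by a direct matching over the six pairs $(a,b)$, that the resulting weights reproduce the prescribed $\cF_{-p}$ together with the bundles $\cG_{p}$ under the index convention used in the statement.
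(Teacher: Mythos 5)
Your proposal is correct and takes essentially the same route as the paper: the paper's own proof of this lemma is a one-line reference to the proof of Lemma \ref{lAO}, and what you write is precisely that adaptation with the right $F_0$-specific data ($\mathcal H=\cP_0$ so that $\cO_{\bP(\mathcal H)}(1)\cong\cO_F(\xi)$ matches the Orlov normalisation $\cO_F(\xi)$, $\cQ^\vee\cong\Omega_{F\vert\p1}(\xi)$, the weights $\cE((a+1)\xi+bf)$, and $\wedge^2\big(\Omega_{F\vert\p1}(\xi)\big)\cong\cO_F(-\xi)$), all of which I have checked reproduce the stated $\cF$'s and $\cG$'s term by term. Your emphasis on taking $\cP_0$ rather than $\cP_0(-1)$ is exactly the one ``due change'' that the paper's terse proof leaves implicit.
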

\begin{proof}
The proof is the same as the one of Lemma \ref{lAO}.
\end{proof}

In order to prove Theorem \ref{tSimplifySegre} we compute below $e^{p,q}:=H^{q+\lceil\frac p2\rceil}\big(F,\cE\otimes\cF_{-p}\big)$ for $0\le q\le 5$ and $-5\le p\le 0$.

\begin{proposition}
\label{pTableSegre}
Let $\cE$ be an instanton bundle on $F$.

If $c_2(\cE)=\alpha\xi^2+\beta \xi f$ then $e^{p,q}$ is the number in position $(p,q)$ in the following table.
\begin{table}[H]
\centering
\bgroup
\def\arraystretch{1.5}
\begin{tabular}{ccccccc}
\cline{1-6}
\multicolumn{1}{|c|}{0} & \multicolumn{1}{c|}{0} & \multicolumn{1}{c|}{0} & \multicolumn{1}{c|}{0} & \multicolumn{1}{c|}{0} & \multicolumn{1}{c|}{0} & $q=5$ \\ \cline{1-6}
\multicolumn{1}{|c|}{$\alpha+\beta-6$} & \multicolumn{1}{c|}{$\beta-3$} & \multicolumn{1}{c|}{0} & \multicolumn{1}{c|}{0} & \multicolumn{1}{c|}{0} & \multicolumn{1}{c|}{0} & $q=4$ \\ \cline{1-6}
\multicolumn{1}{|c|}{0} & \multicolumn{1}{c|}{0} & \multicolumn{1}{c|}{$\alpha-2$} & \multicolumn{1}{c|}{0} & \multicolumn{1}{c|}{0} & \multicolumn{1}{c|}{0} & $q=3$ \\ \cline{1-6}
\multicolumn{1}{|c|}{0} & \multicolumn{1}{c|}{0} & \multicolumn{1}{c|}{0} & \multicolumn{1}{c|}{0} & \multicolumn{1}{c|}{$\alpha-\beta+\gamma$} & \multicolumn{1}{c|}{0} & $q=2$ \\ \cline{1-6}
\multicolumn{1}{|c|}{0} & \multicolumn{1}{c|}{0} & \multicolumn{1}{c|}{0} & \multicolumn{1}{c|}{0} & \multicolumn{1}{c|}{$\gamma$} & \multicolumn{1}{c|}{$\beta-3$}& $q=1$ \\ \cline{1-6}
\multicolumn{1}{|c|}{0} & \multicolumn{1}{c|}{0} & \multicolumn{1}{c|}{0} & \multicolumn{1}{c|}{0} & \multicolumn{1}{c|}{0} & \multicolumn{1}{c|}{0} & $q=0$ \\ \cline{1-6}
$p=-5$ & $p=-4$ & $p=-3$ & $p=-2$ & $p=-1$ & $p=0$
\end{tabular}
\egroup
\caption{The values of $e^{p,q}$}
\end{table}
\end{proposition}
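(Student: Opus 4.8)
The plan is to follow the argument of Proposition~\ref{pTable} essentially verbatim, the bookkeeping being if anything lighter on $F_0$ because of the extra symmetry of $\p1\times\p2$. The first input is purely numerical: the cohomological degree attached to $e^{p,q}$ is $q+\lceil p/2\rceil$, which for $-5\le p\le 0$ equals $q-2$ (for $p=-5,-4$), $q-1$ (for $p=-3,-2$) and $q$ (for $p=-1,0$); whenever this number falls outside $\{0,1,2,3\}$ the entry vanishes for trivial dimension reasons, and this already produces all the zeros in the rows $q=0,5$ and the top corners. Next I would annihilate the two extreme cohomological degrees in every column. Writing $\cF_p=\cO_F(c\xi+df)$, each of the six bundles satisfies $4c+3d<9$, so Lemma~\ref{lHoppe} (with $\mu(\cE)=-27$ and $3(1+e)a+9(a+b)=12a+9b$) gives $h^0\big(F,\cE\otimes\cF_p\big)=0$ for all $p$; dually each satisfies $4c+3d>-9$, so Equality~\eqref{Serre1} gives $h^3\big(F,\cE\otimes\cF_p\big)=0$ for all $p$. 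Hence in each column only degrees $1$ and $2$ can survive, and Lemma~\ref{lNatural} empties the column $p=-2$ entirely, since there $e^{-2,q}=h^{q-1}\big(F,\cE\big)=0$.

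It then remains to prove the four off-diagonal vanishings $h^1\big(F,\cE(-f)\big)=h^2\big(F,\cE(\xi)\big)=h^1\big(F,\cE(-\xi)\big)=h^1\big(F,\cE(-\xi-f)\big)=0$ and to evaluate the surviving entries. The first follows at once from Sequence~\eqref{seqEuler} tensored by $\cE$, using $h^0\big(F,\cE(f)\big)=h^1\big(F,\cE\big)=0$. The crucial one is $h^2\big(F,\cE(\xi)\big)=0$: tensoring the relative Euler sequence~\eqref{seqOmega} by $\cE(\xi)$ and invoking $h^2\big(F,\cE\big)=h^3\big(F,\cE\big)=0$ identifies this group with $h^3\big(F,\cE\otimes\Omega_{F\vert\p1}(\xi)\big)$; since $\Omega_{F\vert\p1}\cong\sigma^*\Omega_{\p2}$ and $\Omega_{\p2}^\vee\cong\Omega_{\p2}(3)$, Serre duality rewrites this as $h^0\big(F,\cE\otimes\Omega_{F\vert\p1}(2\xi)\big)$, which vanishes by tensoring~\eqref{seqOmega} by $\cE(2\xi)$ and using $h^0\big(F,\cE(\xi)\big)=0$. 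Then Equality~\eqref{Serre1} yields $h^1\big(F,\cE(-\xi)\big)=h^2\big(F,\cE(\xi)\big)=0$, and finally~\eqref{seqEuler} tensored by $\cE(-\xi)$, together with $h^0\big(F,\cE(-\xi+f)\big)=0$ and the vanishing just obtained, gives $h^1\big(F,\cE(-\xi-f)\big)=0$.

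With all these vanishings established, each remaining nonzero entry is the unique surviving cohomology group at its twist, so Riemann--Roch~\eqref{RRFano} with $e=0$ computes it directly. One finds $h^2\big(F,\cE(-\xi-f)\big)=\alpha+\beta-6$, $h^2\big(F,\cE(-\xi)\big)=\beta-3$, $h^2\big(F,\cE(-f)\big)=\alpha-2$ and $h^1\big(F,\cE(\xi)\big)=\beta-3$. For the column $p=-1$ the entry $e^{-1,1}=h^1\big(F,\cE(\xi-f)\big)=\gamma$ is the very definition of $\gamma$ in the statement, and $\chi\big(F,\cE(\xi-f)\big)=\alpha-\beta$ together with $h^0=h^3=0$ forces $e^{-1,2}=h^2\big(F,\cE(\xi-f)\big)=\alpha-\beta+\gamma$. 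This fills in every position of the table.

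I expect the only genuinely non-formal step to be the vanishing $h^2\big(F,\cE(\xi)\big)=0$ (equivalently $h^1\big(F,\cE(-\xi)\big)=0$); all the rest is formal manipulation of the two Euler sequences~\eqref{seqEuler} and~\eqref{seqOmega} together with Serre duality. The delicate point there is making the chain through $\Omega_{F\vert\p1}(\xi)$ and $\Omega_{F\vert\p1}(2\xi)$ watertight, and in particular justifying the identification $\Omega_{F\vert\p1}^\vee\cong\Omega_{F\vert\p1}(3\xi)$ that powers the Serre-duality step, since this is precisely where the special geometry of $\sigma\colon F_0\to\p2$ is used.
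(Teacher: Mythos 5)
Your proof is correct and follows essentially the same route as the paper's: the paper obtains every entry except $(-4,3)$, $(0,2)$, $(-1,1)$ by repeating the arguments of Proposition \ref{pTable} (dimension reasons, Lemma \ref{lHoppe} plus Serre duality \eqref{Serre1} for the degree-$0$ and degree-$3$ vanishings, Lemma \ref{lNatural} for the column $p=-2$, the sequences \eqref{seqEuler} and \eqref{seqOmega}, and Riemann--Roch \eqref{RRFano}), and then kills $e^{0,2}=h^2\big(F,\cE(\xi)\big)$ exactly as you do, by identifying it with $h^3\big(F,\cE\otimes\Omega_{F\vert\p1}(\xi)\big)$. Your only departures are cosmetic: you annihilate $h^3\big(F,\cE\otimes\Omega_{F\vert\p1}(\xi)\big)$ via Serre duality and \eqref{seqOmega} twisted by $\cE(2\xi)$, which is precisely the Serre-dual of the paper's use of the dual of \eqref{seqOmega} twisted by $\cE(-2\xi)$ together with $e^{-4,5}=0$, and you spell out (via \eqref{seqEuler} twisted by $\cE(-\xi)$) the vanishing $h^1\big(F,\cE(-\xi-f)\big)=0$ that the paper compresses into its ``word by word'' reference to Proposition \ref{pTable}.
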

\begin{proof}
For each $(p,q)$ but $(-4,3)$, $(0,2)$ and $(-1,1)$ the corresponding values of $e^{p,q}$ are obtained repeating word by word the arguments in the proof of Proposition \ref{pTable}.

By definition $e^{-1,1}=h^1\big(F,\cE\otimes\cF_{-1}\big)=\gamma$. Moreover, Equality \eqref{Serre1} implies $e^{-4,3}=e^{0,2}$. The cohomology of Sequence \eqref{seqOmega} and its dual tensored by $\cE(\xi)$ and $\cE(-2\xi)$ respectively and the vanishings $e^{-4,5}=e^{-2,3}=0$ yield
\begin{align*}
e^{0,2}=h^2\big(F,\cE\otimes\cF_{0}\big)=h^2\big(F,\cE(\xi)\big)= h^3\big(F,\cE\otimes\Omega_{F\vert \p1}(\xi)\big)=0.
\end{align*}  

The statement is then completely proved.
\end{proof}

The following corollary and proof of Theorem \ref{tSimplifySegre} are immediate.

\begin{corollary}
\label{cBoundSegre}
Let $\cE$ be an instanton bundle with $c_2(\cE)=\alpha\xi^2+\beta  \xi f$ on $F$.

Then $\alpha\ge2$, $\beta\ge3$ and $\alpha+\beta\ge6$.
\end{corollary}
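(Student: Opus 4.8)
The plan is to extract all three inequalities directly from the table of cohomology dimensions established in Proposition \ref{pTableSegre}, exactly as in the analogous Corollary \ref{cBound} for $F_1$. The one operative observation is that each entry $e^{p,q}=h^{q+\lceil\frac p2\rceil}\big(F,\cE\otimes\cF_{-p}\big)$ is, by definition, the dimension of a cohomology group, hence a non--negative integer.

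With this in hand I would simply inspect the relevant positions in the table. The entry in position $(p,q)=(-3,3)$ equals $\alpha-2$, so its non--negativity gives $\alpha\ge2$. The entry in position $(-4,4)$ (equivalently the one in position $(0,1)$) equals $\beta-3$, which forces $\beta\ge3$. Finally, the entry in position $(-5,4)$ equals $\alpha+\beta-6$, yielding $\alpha+\beta\ge6$; note that this last bound is genuinely stronger than what $\alpha\ge2$ and $\beta\ge3$ alone would give, so it is essential that it appears as an independent entry of the table. These three bounds are precisely the assertion of the corollary.

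I do not expect any real obstacle here: all the substantive work has already been carried out in proving Proposition \ref{pTableSegre}, where the $\mu$--semistability of $\cE$, Lemma \ref{lHoppe}, and Serre duality \eqref{Serre1} were used to pin down the shape of the table. Once that table is available, the corollary is a one--line consequence of positivity of dimensions, and in particular no further sequence--chasing (such as the twist of Sequence \eqref{seqEuler} by $\cE(-\xi+f)$ that was needed to close the last line of Corollary \ref{cBound} in the $F_1$ case) is required.
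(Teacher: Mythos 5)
Your proposal is correct and is exactly the paper's own argument: the corollary is deduced from the non--negativity of the entries $e^{p,q}$ in the table of Proposition \ref{pTableSegre}, with $\alpha-2$, $\beta-3$ and $\alpha+\beta-6$ read off at positions $(-3,3)$, $(-4,4)$ (or $(0,1)$) and $(-5,4)$ respectively. Your remark that, unlike Corollary \ref{cBound} on $F_1$, no extra sequence--chasing is needed here also matches the paper, whose proof consists of this single observation.
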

\begin{proof}
The statement follows from the obvious non--negativity of the $e_{p,q}$'s.
\end{proof}

\medbreak
\noindent{\it Proof of Theorem \ref{tSimplifySegre}.}
The proof is completely analogous to the one of Theorem \ref{tSimplify}. 

If $\cE$ is an instanton bundle on $F$, then it suffices to apply \ref{lAOSegre} using the values $e^{p,q}$ calculated in Proposition \ref{pTableSegre} in order to obtain the complex ${\cC}^\bullet$ where
\begin{gather*}
\cC^{-1}:=\cO_F(-2\xi-f)^{\oplus\alpha+\beta-6},\\
\cC^0:=\cO_F(-2\xi)^{\oplus\beta-3}\oplus\Omega_{F\vert \p1}(-f)^{\oplus\alpha-2}\oplus\cO_F(-\xi-f)^{\oplus\gamma},\\
\cC^1:=\cO_F(-\xi-f)^{\oplus\alpha-\beta+\gamma}\oplus\cO_F(-\xi)^{\oplus\beta-3}.
\end{gather*}

Conversely, let $\cE$ be the cohomology $\cC^\bullet$. Argueing as in the analogous part of the proof of Theorem \ref{tSimplify} one deduces that $c_1(\cE)=-3\xi-2f$, $c_2(\cE)=\alpha\xi^2+\beta  \xi f$, $h^1\big(F,\cE(\xi-f)\big)\le\gamma$ and that $\cE$ is earnest.
\qed
\medbreak

\begin{remark}
\label{rReferee2}
Again the $\mu$--semistability of the cohomology $\cE$ of the monad $\cC^\bullet$ in the second part of the statement of Theorem \ref{tSimplifySegre} is necessary.

Indeed, the same argument used in Remark \ref{rReferee2} leads to a surjective morphism $\varphi\colon \cO_F(-2\xi)\oplus\cO_F(-\xi-f)^{\oplus2}\to\cO_F(-\xi)$. Thus we still obtain Monad \eqref{MonadSegre} when $\alpha=2$, $\beta=4$ and $\gamma=2$, whose cohomology $\cE\cong\ker(\varphi)\cong\cO_F(-2\xi)\oplus\cO_F(-\xi-2f)$, which is not $\mu$--semistable.
\end{remark}

We now prove the existence of instanton bundles via the Serre construction.

\begin{construction}
\label{conSegre}
Let $\alpha$ and $\beta$ be integers such that $\alpha\ge2$, $\beta\ge3$ and $\alpha+\beta\ge6$. We take $M_1,\dots, M_{\alpha-2}$ and $N_1,\dots,N_{\beta-3}$ pairwise disjoint curves corresponding to points in $\Lambda_M$ and $\Lambda_N$ respectively and define 
\begin{equation}
\label{ZSegre}
Z:=\bigcup_{i=1}^{\alpha-2}M_i\cup\bigcup_{j=1}^{\beta-3}N_j\subseteq F.
\end{equation}
Notice that the restriction $\alpha+\beta\ge6$ implies $Z\ne\emptyset$.

Since we have the isomorphisms (see Remarks \ref{rPseudoConic}, \ref{rPseudoLine} and Equality \eqref{Normal})
\begin{gather*}
\det(\cN_{Z\vert F})\otimes\cO_{M_i}\cong\cO_{\p1}\cong\cO_F(\xi)\otimes\cO_{M_i},\\
\det(\cN_{Z\vert F})\otimes\cO_{N_j}\cong\cO_{\p1}(1)\cong\cO_F(\xi)\otimes\cO_{N_j},
\end{gather*}
it follows that $\det(\cN_{Z\vert F})\cong\cO_F(\xi)\otimes\cO_Z$. 

Thus the equality $h^2\big(F,\cO_F(-\xi)\big)=0$ and Theorem \ref{tSerre} guarantee the existence of a vector bundle $\cF$ on $F$ with a section $s$ vanishing exactly along $Z$ and with $c_1(\cF)=\xi$, $c_2(\cF)=Z$, fitting into Sequence \eqref{seqSerre}. Tensoring such sequence by $\cO_F(-2\xi-f)$ we obtain the exact sequence
\begin{equation}
\label{seqSegre}
0\longrightarrow\cO_F(-2\xi-f)\longrightarrow\cE\longrightarrow\cI_{Z\vert F}(-\xi-f)\longrightarrow0,
\end{equation}
where $\cE:=\cF(-2\xi-f)$.

The bundle $\cE$ is uniquely determined by $Z$, because $h^1\big(F,\cO_F(-\xi)\big)=0$.
\end{construction}

We are now able to prove Theorem \ref{tSegre} stated in the introduction.

\medbreak
\noindent{\it Proof of Theorem \ref{tSegre}.}
By construction $c_1(\cE)=-h$ and $c_2(\cE)=\alpha\xi^2+\beta\xi f$. One easily obtains $h^1\big(F,\cE\big)=0$ from the cohomology of Sequence \eqref{seqSegre}. Let us prove that $\cE$ is $\mu$--stable, i.e. that $h^0\big(F,\cE(-a\xi-bf)\big)=0$ for  each pair of integers $a$ and $b$ such that $12a+9b\ge\mu(\cE)=-27$, i.e. satisfying Inequality 
\begin{equation}
\label{ABSegre}
b\ge-3-\frac43a.
\end{equation}
We will check this by showing that 
\begin{equation}
\label{h^0}
h^0\big(F,\cO_F(-(a+2)\xi-(b+1)f)\big)=h^0\big(F,\cI_{Z\vert F}(-(a+1)\xi-(b+1)f)\big)=0
\end{equation}
in that range. This is obvious if either $a\ge0$, or $b\ge0$. 

Let $a,b\le-1$: if $a\le-2$, then Inequality \eqref{ABSegre} implies $b\ge0$ and the assertion follows from the former case.

Let $a=-1$: Inequality \eqref{ABSegre} implies $-(b+1)\le0$, i.e. $b\ge-1$ hence again the statement follows from Proposition \ref{pLineBundleSegre} unless $b=-1$. In this case Equalities \eqref{h^0} are trivial. It follows that $\cE$ is an instanton bundle. 

Thanks to Remark \ref{rLine} we know that $F$ does not contain lines, hence $\cE$ is generically trivial by definition. Since $\cE$ is an instanton bundle on $F$, it follows that it is the cohomology of Monad \eqref{MonadSegre}, hence it is automatically earnest thanks to Theorem \ref{tSimplifySegre}.

We know that $\cE$, being $\mu$--stable, is also simple, hence $\Ext^3_{F}\big(\cE,\cE\big)=0$. The vanishing $h^2\big(F,\cE\otimes\cE^\vee\big)=0$ follows from the cohomology of  Sequence \eqref{seqSegre} tensored by $\cE^\vee\cong\cE(h)$, once we check that $h^2\big(F,\cE(\xi+f)\big)=h^2\big(F,\cE\otimes\cI_{Z\vert F}(2\xi+f)\big)=0$.

Thanks to Proposition \ref{pLineBundleSegre} we know that $h^2\big(F,\cO_F(-\xi)\big)=0$, hence the cohomology of Sequences \eqref{seqSegre} tensored by $\cO_F(\xi+f)$ and \eqref{seqIdeal} return
$$
h^2\big(F,\cE(\xi+f)\big)\le h^2\big(F,\cI_{Z\vert F}\big)\le h^1\big(Z,\cO_Z\big).
$$
The dimension on the right is zero, because $Z$ is the disjoint union of smooth rational curves.

A similar argument shows that $h^2\big(F,\cE(2\xi+f)\big)\le h^1\big(Z,\cO_Z(\xi)\big)=0$, hence the cohomology of Sequence \eqref{seqIdeal} tensored by $\cE(2\xi+f)$ returns
\begin{align*}
h^2\big(F&,\cE\otimes\cI_{Z\vert F}(2\xi+f)\big)\le h^1\big(Z,\cO_Z\otimes\cE(2\xi+f)\big)=\\
&=\sum_{i=1}^{\alpha-1}h^1\big(N_i,\cO_{N_i}\otimes\cE(2\xi+f)\big)+\sum_{j=1}^{\beta-3}h^1\big(M_j,\cO_{M_j}\otimes\cE(2\xi+f)\big).
\end{align*}
Equality \eqref{Normal} yields $\cE(2\xi+f)\otimes\cO_Z\cong\cN_{Z\vert F}$, hence 
$$
\cO_{N_i}\otimes\cE(2\xi+f)\cong\cO_{N_i}\oplus\cO_{N_i}(1),\qquad \cO_{M_j}\otimes\cE(2\xi+f)\cong\cO_{M_j}^{\oplus2}.
$$
hence $h^2\big(F,\cE\otimes\cI_{Z\vert F}(2\xi+f)\big)=0$.
\qed
\medbreak

In particular we have proved the existence of earnest instanton bundles inside $\cI_F(\alpha\xi^2+\beta\xi f)$. 

\begin{corollary}
\label{cInstantonSegre}
For each $\alpha,\beta\in\bZ$  such that $\alpha\ge2$, $\beta\ge3$ and $\alpha+\beta\ge6$ there is an irreducible component inside $\cI_F(\alpha\xi^2+\beta  \xi f)$ which is  generically smooth of dimension $4\alpha+6\beta-30$ and containing all the points corresponding to the bundles obtained via Construction \ref{conSegre}.
\end{corollary}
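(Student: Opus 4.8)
The plan is to follow verbatim the strategy already used for Corollary~\ref{cInstanton}, now with the two families of curves appropriate to $F_0$. First I would record that the subschemes arising in Construction~\ref{conSegre}, namely those of the form \eqref{ZSegre}, are parameterized by the locus
$$
\cU\subseteq\Lambda_M^{\times\alpha-2}\times\Lambda_N^{\times\beta-3}
$$
consisting of tuples of pairwise disjoint curves. Since distinct general members of $\Lambda_M$ and of $\Lambda_N$ are mutually disjoint (Remarks~\ref{rPseudoConic} and~\ref{rPseudoLine}), this locus is a non--empty open subset; and as both $\Lambda_M$ and $\Lambda_N$ are irreducible (again by those remarks), their product is irreducible, whence $\cU$ is irreducible as well.

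Next I would promote this into a family of bundles. By Construction~\ref{conSegre} the bundle $\cE$ appearing in Sequence~\eqref{seqSegre} is uniquely determined by the scheme $Z$, because $h^1\big(F,\cO_F(-\xi)\big)=0$; relativizing the Serre correspondence of Theorem~\ref{tSerre} over $\cU$ therefore yields a flat family of rank~$2$ bundles whose fibres are exactly the bundles produced by the construction. By Theorem~\ref{tSegre} each such fibre is a $\mu$--stable instanton bundle with charge $\alpha\xi^2+\beta\xi f$, so the family induces a classifying morphism
$$
u\colon\cU\longrightarrow\cI_F(\alpha\xi^2+\beta\xi f).
$$

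Finally I would invoke the deformation theory supplied by Theorem~\ref{tSegre}. Since $\Ext^2_F\big(\cE,\cE\big)=0$ for every bundle in the family, the moduli space $\cM_F\big(2;0,\alpha\xi^2+\beta\xi f\big)$ is smooth at every point of $u(\cU)$; hence $u(\cU)$ lies in a unique irreducible component, which is consequently generically smooth. Its dimension equals the tangent space dimension $\dim\Ext^1_F\big(\cE,\cE\big)=4\alpha+6\beta-30$, once more by Theorem~\ref{tSegre}, which is the asserted value.

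I do not anticipate any genuine obstacle here: every ingredient — the irreducibility of $\cU$, the uniqueness of $\cE$ given $Z$, and the vanishing of $\Ext^2_F\big(\cE,\cE\big)$ together with the value of $\dim\Ext^1_F\big(\cE,\cE\big)$ — is already established in the results quoted above. The only point deserving a line of care is the flatness of the relativized Serre construction over $\cU$, which holds because the defining data of Theorem~\ref{tSerre} vary algebraically while the numerical invariants stay constant along $\cU$.
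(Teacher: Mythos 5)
Your proposal is correct and follows essentially the same route as the paper: the paper parameterizes the schemes $Z$ by an irreducible open subset of $\Lambda_M^{\times\alpha-2}\times\Lambda_N^{\times\beta-3}$ and then repeats verbatim the argument of Corollaries \ref{cInstanton} and \ref{cEarnest} (flat family from the uniqueness of $\cE$ given $Z$, classifying morphism, smoothness of its image from $\Ext^2_F\big(\cE,\cE\big)=0$, and dimension from $\dim\Ext^1_F\big(\cE,\cE\big)$). Your write-up simply makes those steps explicit, including the flatness point the paper leaves implicit.
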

\begin{proof}
The schemes as in Equality \eqref{ZSegre} represent points in a non--empty open subset $\cW\subseteq\Lambda_M^{\times\alpha-2}\times \Lambda_N^{\times\beta-3}$ which is irreducible  (see Remarks \ref{rPseudoConic} and \ref{rPseudoLine}). Thus we deduce the statement as in the proofs of Corollaries \ref{cInstanton} and \ref{cEarnest}.
\end{proof}

Let $\cE$ be an instanton bundle with $c_2(\cE)=\alpha\xi^2+\beta\xi f$. Thus, Inequality \eqref{Minimal} yields $c_2(\cE)h=2\alpha+3\beta\ge14$. Moreover, it is easy to check using Corollary \ref{cBoundSegre} that the case $c_2(\cE)h=14$ cannot occur. In particular, an instanton bundle $\cE$ of minimal charge still satisfies $c_2(\cE)h=15$. The description of such an $\cE$ is easy, thanks to Theorem \ref{tSimplifySegre}.

\begin{proposition}
\label{pMinimalSegre}
If $\cE$ is an instanton bundle on $F$ with $c_2(\cE)h=15$, then $\cE\cong\Omega_{F\vert\p1}(-f)$.
\end{proposition}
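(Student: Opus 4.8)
The plan is to mirror the proof of Proposition \ref{pMinimal}, replacing the $F_1$--monad by the $F_0$--monad of Theorem \ref{tSimplifySegre}. First I would pin down the charge. Writing $c_2(\cE)=\alpha\xi^2+\beta\xi f$, the equality $c_2(\cE)h=2\alpha+3\beta=15$ together with the constraints $\alpha\ge2$ and $\beta\ge3$ of Corollary \ref{cBoundSegre} forces $\alpha=\beta=3$: indeed $\beta\ge3$ gives $2\alpha\le6$, and then $2\alpha+3\beta=15$ leaves only $\alpha=\beta=3$. A routine Chern class computation shows that $\Omega_{F\vert\p1}(-f)\cong\sigma^*\Omega_{\p2}(-f)$ has $c_1=-h$ and $c_2=3\xi^2+3\xi f$, so it is a candidate of exactly this charge; moreover Construction \ref{conSegre} with the admissible pair $\alpha=\beta=3$ guarantees that instanton bundles of this charge actually exist.

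Next I would feed $\alpha=\beta=3$ into the monad of Theorem \ref{tSimplifySegre}. With these values $\alpha+\beta-6=0$ and $\beta-3=0$, hence $\cC^{-1}_0=0$, the term $\cC^0_0$ collapses to $\Omega_{F\vert\p1}(-f)\oplus\cO_F(-\xi-f)^{\oplus\gamma}$, and $\cC^1_0=\cO_F(-\xi-f)^{\oplus\gamma}$, where $\gamma=h^1\big(F,\cE(\xi-f)\big)$. Everything therefore reduces to showing $\gamma=0$: once this is known the monad is concentrated in degree $0$ and equals $\Omega_{F\vert\p1}(-f)$, giving $\cE\cong\Omega_{F\vert\p1}(-f)$.

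To prove $\gamma=0$ I would tensor the relative Euler sequence \eqref{seqEuler} by $\cE(\xi)$, obtaining
$$
0\longrightarrow\cE(\xi-f)\longrightarrow\cE(\xi)^{\oplus2}\longrightarrow\cE(\xi+f)\longrightarrow0,
$$
and read off the relevant portion of the associated long exact cohomology sequence
$$
H^0\big(F,\cE(\xi+f)\big)\longrightarrow H^1\big(F,\cE(\xi-f)\big)\longrightarrow H^1\big(F,\cE(\xi)\big)^{\oplus2}.
$$
The left term vanishes by $\mu$--semistability via Lemma \ref{lHoppe}, since for $\cE(\xi+f)=\cE(\xi\cdot 1+f\cdot 1)$ one has $12(-1)+9(-1)=-21>-27=\mu(\cE)$; the right term vanishes because $h^1\big(F,\cE(\xi)\big)=\beta-3=0$ is precisely the entry $e^{0,1}$ computed in Proposition \ref{pTableSegre}. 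Squeezed between two zeros, $\gamma=h^1\big(F,\cE(\xi-f)\big)=0$, which completes the argument.

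All computations involved are routine cohomology chases; the only genuine content is deciding which single cohomology group controls the degeneration of the monad and then choosing the right twist of the Euler sequence to annihilate it. I expect no serious obstacle, the main point of care being to keep the $F_0$--normalization of $\gamma$ (a \emph{positive} $\xi$--twist) straight, since it is the opposite twist from the $F_1$ case treated in Proposition \ref{pMinimal}.
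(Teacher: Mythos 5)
Your proposal is correct and follows essentially the same route as the paper: pin down $\alpha=\beta=3$ from Corollary \ref{cBoundSegre}, kill the single invariant $\gamma=h^1\big(F,\cE(\xi-f)\big)$ controlling the monad of Theorem \ref{tSimplifySegre}, and conclude that the monad collapses to $\Omega_{F\vert\p1}(-f)$. The only (cosmetic) difference is that you run the Euler-sequence chase on the positive twists $\cE(\xi-f),\cE(\xi),\cE(\xi+f)$ directly, whereas the paper invokes ``the same argument as Proposition \ref{pMinimal}'', i.e.\ the Serre-dual version with negative twists plus Equality \eqref{Serre1}; both use exactly the same ingredients (Lemma \ref{lHoppe}, the table of Proposition \ref{pTableSegre}, Sequence \eqref{seqEuler}).
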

\begin{proof}
The restrictions $\alpha+\beta\ge6$ and $\beta\ge3$ imply $c_2(\cE)h=2\alpha+3\beta\ge15$ for each instanton bundle on $F$. 

If equality holds, then $\alpha+\beta=6$, hence $\alpha=\beta=3$ and we know that such an $\cE$ exists thanks to Construction \ref{conSegre}. The same argument used in the proof of Proposition \ref{pMinimal} still shows that $\gamma=0$, hence still yields $\cE\cong\Omega_{F\vert\p1}(-f)$ thanks to  Theorem \ref{tSimplifySegre}.
\end{proof}

\bigskip
\noindent
Gianfranco Casnati,\\
Dipartimento di Scienze Matematiche, Politecnico di Torino,\\
c.so Duca degli Abruzzi 24,\\
10129 Torino, Italy\\
e-mail: {\tt gianfranco.casnati@polito.it}

\bigskip
\noindent
Ozhan Genc,\\
Dipartimento di Scienze Matematiche, Politecnico di Torino,\\
c.so Duca degli Abruzzi 24,\\
10129 Torino, Italy\\
\textit{Current Address: Department of Mathematics and Informatics,\\
Jagiellonian University, {\L}ojasiewicza 6, 30-348 Krak{\'o}w, Poland}\\
e-mail: {\tt ozhangenc@gmail.com}

\end{document}